\newtheorem{Proposition}{Proposition}
  \newtheorem{Remark}{Remark}
  \newtheorem{Lemma}[Proposition]{Lemma}
  \newtheorem{Theorem}{Theorem}
 \newtheorem{Definition}[Proposition]{Definition}
\def\blackslug{\hbox{\hskip 1pt \vrule width 4pt height 8pt depth 1.5pt
\hskip 1pt}}
\def\qed{\quad\blackslug\lower 8.5pt\null\par}
\def\r#1{\textcolor{red}{#1}}
\begin{document}

\author{S. Tanveer$^1$ and C. Tsikkou$^2$} 
\address{$^1$ Mathematics Department\\The Ohio State University\\Columbus, OH 43210 \\ 
$^2$ Mathematics Department\\West Virginia University \\Morgantown, WV 26506} 

\title{Analysis of $2+1$ diffusive-dispersive PDE arising in river braiding}

\maketitle
 
\begin{center}
\today
\end{center}
\begin{abstract}
We present local existence 
and uniqueness results for the following $2+1$ dispersive diffusive
equation due to Hall
\cite{Phall} arising in modeling of river braiding:
$$ u_{yyt} - \gamma u_{xxx} -\alpha u_{yyyy} - \beta u_{yy} + \left ( u^2 \right )_{xyy} = 0 $$
for $(x,y) \in [0, 2\pi] \times [0, \pi]$, $t> 0$, with boundary condition $u_{y}=0=u_{yyy}$ at $y=0$ and
$y=\pi$ and $2\pi$ periodicity in $x$, using a contraction mapping argument in a 
Bourgain-type
space $T_{s,b}$. We also show that the energy $\| u \|^2_{L^2} $ and cumulative dissipation
$\int_0^t \| u_y \|_{L^2}^2 dt$ are globally controlled in time.
\end{abstract}

\section{Introduction and the main result}

In the context of a weakly nonlinear study of instabilities of a straight river channel, Hall \cite{Phall}
introduced the following evolution equation for deposited sediment depth $u(x, y, t)$:
\begin{equation}
\label{1.1}
u_{yyt} - \gamma u_{xxx} - \alpha u_{yyyy} -\beta u_{yy} + [u^2]_{xyy} = 0   
\end{equation}
with parameters $\gamma, \alpha, \beta$ where $\alpha > 0$ and $\gamma \ne 0$. The domain
of interest is
$\mathcal{D}: \left \{ (x,y, t): y \in (0, 2 \pi), x \in \mathbb{R} ~ \ ,~ t > 0 \right \}$.
The boundary condition physically appropriate corresponds to
\begin{equation}
\label{1.2}
u_y = 0 = u_{yyy} ~{\rm on}~ y =0, \pi .  
\end{equation}
With initial conditions
\begin{equation}
\label{1.3}
u(x,y,0) = u_0 (x,y) \,
\end{equation} 
\eqref{1.1}-\eqref{1.3} constitutes the initial/boundary value problem of interest. 
We are not aware of any analysis of the Hall equations (\ref{1.1})-(\ref{1.3}).
In a mathematical context, this is an example of a nonlinear $2+1$ evolution equation that is dispersive
in one spatial direction ($x$), like the KdV or nonlinear Schroedinger equation, 
while being diffusive in the other direction ($y$). 
Further, if one were to express (\ref{1.1}) in an infinite system of $1+1$ equations using
a Fourier cosine series involving $\cos (ny)$  in $y$, as appropriate for \eqref{1.2},
the dispersive regularization in $x$ is not uniform. 
Therefore, the 
analysis of this initial value problem requires significant adaptation of
known methods (see for instance \cite{Bourgain}, \cite{Koenig16}, \cite{Tao}) for
dispersive PDEs.)
Indeed, the adaptation introduced here
should be of interest more generally to other $2+1$ diffusive-dispersive systems. 
Here we present results for the periodic case of the Hall equation:  
\begin{equation}
\label{1.3.1}
u (x+2 \pi, y, t) = u (x, y, t) \ ,
\end{equation}
though it 
will be clear that the analysis can be adapted with minor changes (for instance
sum over $m$ replaced by integration in the ensuing) for the non-periodic case $x \in \mathbb{R}$ as well. However,
finiteness in the $y$-direction is essential for the method presented here to work. We also
note that the analysis for the more general periodic case in $x$ and $y$ (different from $2 \pi$)
can be brought to the case studied here merely by rescaling
variables and parameters.
We note at once from $y$ integration of (\ref{1.1}) that   
for any regular solution
\begin{equation}
\label{1.3.1}
\frac{\partial^3}{\partial x^3} \int_0^{\pi} u (x, y, t) dy = 0 \ , {\rm implying~from~above}~ 
\frac{1}{\pi} \int_0^{\pi} u(x, y, t) dy = C_0 (t) . 
\end{equation}
It is to be noted that $C_0 (t)$ can be specified and is not determined by the equation itself; however,
the analysis only needs to be done for $C_0 (t)=0$ since 
if $C_0 (t) \ne 0$,
the change of variables $(x,y,t, u) \rightarrow \left ( x - 2 \int_0^t C_0(t') dt', y, t, u+C_0 (t) \right )$ 
transforms \eqref{1.1} back to itself with the new $u$ satisfying $\int_0^{\pi} u (x,y, t) dy=0$.
Therefore, we seek solution with representation\footnote{The insertion of factor $e^{\beta t}$ in (\ref{1.4}) 
makes the analysis of the ensuing integral equation
simpler.}
\begin{equation}
\label{1.4}
u(x, y, t) = e^{\beta t} 
\sum_{(m,n)\in \mathbb{Z}^2_0} u_{m,n} (t) \exp \left [ i m x + i n y \right ] \ ,
\end{equation}
where
\begin{equation}
\label{1.4.1}
\mathbb{Z}^2_0 = \left \{ (m,n) \in \mathbb{Z}^2: n \ne 0 \right \} \ ,
\end{equation}
with restriction 
\begin{equation}
\label{1.5}
u_{m,-n} = u_{m,n} \ , u_{-m,n} = u^*_{m,n}  
\end{equation}
that ensures that   
$u$ is real and contains only 
$\left \{ \cos (ny) \right \}_{n=1}^\infty$ 
terms that automatically
satisfy boundary conditions (\ref{1.2}) at
$y=0$ and $y=\pi$.
Applying Duhamel's principle, the initial boundary value problem (\ref{1.1})-(\ref{1.3}) 
can be formally reduced to the following integral equation
for 
$\mathbf{U} (t) = 
\left \{ u_{m,n} (t) \right \}_{m,n\in \mathbb{Z}^2_0} $ with $t > 0$:
\begin{equation}
\label{1.6}
u_{m,n} (t) = e^{-\left ( \alpha n^2 - i l_{m,n} \right ) t} u_{m,n} (0) 
+ \int_0^t e^{-\left (\alpha n^2 -i l_{m,n} \right ) (t-s)} e^{\beta s}  
A_{m,n} (s) ds \ , 
\end{equation}
where 
\begin{equation}
\label{1.6.1}
l_{m,n} = \frac{\gamma m^3}{n^2}  
\end{equation}
and
\begin{equation}
\label{1.7}
A_{m,n} = -i m \sum_{(m',n') \in \mathbb{Z}^2_{0,n}}
u_{m',n'} u_{m-m',n-n'} \ ,
\end{equation}
where for any $n \in \mathbb{Z} \setminus \{0 \}$,
\begin{equation}
\label{1.7.1}
\mathbb{Z}^2_{0,n} = \left \{ (m',n') \in \mathbb{Z}^2: n' \ne 0, n' \ne n \right \} \ .
\end{equation}
We also require that $u_{m,n} (0)$ to satisfy the symmetry condition (\ref{1.5}). 
\begin{Remark}
\label{rem1}
{\rm 
With the symmetry (\ref{1.5}) imposed initially,
with appropriate transformation of summation variables $(m',n')$, it is clear
that if ${\bf U} (t) = \left \{ u_{m,n} (t) \right \}_{(m,n)\in \mathbb{Z}^{2}_{0}}$ 
is one solution, so will be
$\left \{ u_{m,-n} (t) \right \}_{(m,n) \in \mathbb{Z}^{2}_0}$, or 
$\left \{ u^*_{-m,n} (t) \right \}_{(m,n) \in \mathbb{Z}^{2}_0}$
Therefore, if the solution is unique, as will be shown to be the case, 
the conditions (\ref{1.5}), once satisfied initially, 
remain time-invariant and therefore (\ref{1.5}) is satisfied automatically. It is of course
possible to generalize representation (\ref{1.6}) for complex periodic initial
data by relaxing (\ref{1.5}), though it is not of physical interest in the river context.}
\end{Remark}
It is useful to introduce abstract operator $e^{t \mathcal{L} }$ 
so that integral equation (\ref{1.5}) 
through basis representation (\ref{1.4}) can be interpreted as
\begin{equation}
\label{absDuhammel}
u (\cdot, \cdot, t) = e^{t \mathcal{L} } u_0 - \int_0^t e^{(t-\tau) \mathcal{L} } 
~~\left [u^2 \right ]_x (\cdot, \cdot, \tau)  d\tau    
\end{equation}
where in the basis representation (\ref{1.4})
\begin{equation}
\label{eqdefL}
\left [ e^{t \mathcal{L}} v \right ]_{m,n} = e^{\beta t} e^{(-\alpha n^2 + i l_{m,n} )t} v_{m,n}   
\end{equation}
\begin{Remark}
\label{rem2}
{\rm 
Equation (\ref{absDuhammel}) is an integral reformulation formulation of the original PDE
initial value problem (\ref{1.1}), (\ref{1.3}); it does not require
functions to be differentiable in any sense, except for once in $x$. Therefore,
it is appropriate to look for solutions to
(\ref{absDuhammel}) for which
$u_{m,n} (t)$ is integrable in time and
$ \left \{ (1+|m|+|n|)^{s} u_{m,n} \right \} \in l^2 (\mathbb{Z}^{2}_0)$, 
for $s > 2$.
Clearly if $s $ is large enough, it generates
a classical solution of (\ref{1.1}), (\ref{1.3}).
}
\end{Remark}

As usual for treatment of dispersive PDEs through Fourier
transform in time, we introduce a smooth cut-off function $\phi$ 
with support in $(-2 \delta, 2 \delta)$ and with $\phi =1$ in $[-\delta, \delta]$.
We replace the original system of equations (\ref{1.6}) by the following 
system that extends the
solution to $t \in \mathbb{R}$.
\begin{equation}
\begin{aligned}
\label{1.8}
\theta_{m,n} (t)  
&= \phi (t) e^{-\alpha |t| n^2 +i l_{m,n} t} u_{m,n} (0) \\
&+ \phi(t) \int_{\mathbb{R}} H(t-s) e^{-(\alpha n^2 -i l_{m,n} ) (t-s)} 
e^{\beta s} \phi^2 (s) \Lambda_{m,n} (s) ds \\
&-
\phi(t) e^{(i l_{m,n} t -\alpha n^2 |t|)}  
\int_{\mathbb{R}} H(-s) e^{(\alpha n^2  -i l_{m,n}) s}
e^{\beta s} \phi^2 (s) \Lambda_{m,n} (s) ds  \ ,  
\end{aligned}
\end{equation}
where $H$ is the Heaviside function ($H:=\chi_{[0, \infty)}$) and
\begin{equation}
\label{1.9}
\Lambda_{m,n} =-i m \sum_{(m',n') \in \mathbb{Z}^2_{0,n}}
\theta_{m',n'} \theta_{m-m',n-n'}
\end{equation}
It is clear that any
solution
$\boldsymbol{\theta} (t) := \left \{ \theta_{m,n} (t) \right \}_{(m,n) \in \mathbb{Z}^2_0}$ 
satisfying (\ref{1.8}) has compact support in $(-2 \delta, 2 \delta)$ and, since $\phi =1$ for
$t\in [0, \delta]$, satisfies exactly the same equation (\ref{1.6}) in that interval as does 
${\bf U} (t) = \left \{ u_{m,n} (t) \right \}_{(m,n)\in \mathbb{Z}^2_0}$. 
Further, for any solution
${\bf U} (t)$
to (\ref{1.6}), $\phi (t) {\bf U} (t)$ will be compactly supported in $(-2\delta, 2 \delta)$ and
for $t \in [0, \delta]$, where $\phi =1$, will 
satisfy (\ref{1.8}), though this is not
true outside this time interval. 
It is convenient to write the system in \eqref{1.8} symbolically as
\begin{equation}
\label{1.10}
\boldsymbol{\theta}  = \boldsymbol{\theta}^{(0)}  + \mathcal{M} [ \boldsymbol{\theta} ] 
=: \mathcal{N} [\boldsymbol{\theta} ]  \ ,    
\end{equation}
where the $(m,n)$-th component of (\ref{1.10}) at time $t \in \mathbb{R}$, for 
$(m,n) \in \mathbb{Z}^2_0$ 
is given by (\ref{1.8}), with
$\Lambda_{m,n} $ determined in terms of $\theta_{m,n}$ through (\ref{1.9}).

\begin{Definition}
{\rm
We define a Bourgain-type \cite{Bourgain} norm $\| . \|_{s,b}$ of
vector function $\boldsymbol{\theta} (t) = \left \{ \theta_{m,n} \right \}_{(m,n) 
\in {\mathbb{Z}^2_0}}$ such that
\begin{equation}
\label{1.11}
\| \boldsymbol{\theta} \|_{s,b}^2 = \sum_{(m,n) \in \mathbb{Z}^2_0 }
\int_{\mathbb{R}} W_{m,n} (\tau; b) \Big | \mathcal{F} \left [ 
\theta_{m,n} \right ] (\tau) \Big |^2 d\tau \ , 
\end{equation}
where $\mathcal{F} [\theta_{m,n} ] (\tau) = \int_{\mathbb{R}} e^{-i t \tau} \theta_{m,n} (t) dt$
is the Fourier transform in time and weight
\begin{equation}
\label{1.12}
W_{m,n} (\tau; b) = \rho_{m,n} \omega_{m,n} (\tau; b) \ , ~
\rho_{m,n} = \left ( |n|+|m| \right )^{2s} \ ,~~ 
\omega_{m,n} (\tau; b) = \left ( n^2 + 
|\tau-l_{m,n} | \right )^{2b} .
\end{equation}
The space of 
vector functions $\left \{ \boldsymbol{\theta} (t) \right \}$ for which 
$ \| \boldsymbol{\theta} \|_{s,b} < \infty$ is defined as $T_{s,b}$.
We also denote by $H^s_{0,b}$ 
the space of functions $v (x,y)$ in $\mathbb{T} [0, 2\pi ]^2$ with $\int_0^{2\pi}
v (x,y) dy = 0 $, represented by $v (x,y) = \sum_{(m,n) \in \mathbb{Z}^2_0} v_{m,n} e^{i m x + i n y}$, 
equipped with the norm
$$ \| v \|_{H^s_{0,b}}^2 = \sum_{(m,n) \in \mathbb{Z}^2_0} |n|^{4b-2} \rho_{m,n} |v_{m,n}|^2 $$
$H^s_0$ will denote the subspace of the usual Sobolev space $H^s$ satisfying zero average condition
$\int_0^{2\pi} v(x,y) dy=0$.
}
\end{Definition}
\begin{Remark}
{\rm 
With the representation (\ref{1.4}),
the $\| \cdot \|_{s,b}$ norm could also be thought of as a norm in the space
of 
functions $\left \{  v(x , y, t) \right \}$
rather than the space of corresponding sequences, 
as defined here.
}
\end{Remark}
\begin{Theorem} 
\label{Thm1}
Assume $s > \frac{5}{2}$ and $u_0 \in H^s_{0,b} (\mathbb{T}^2)$.  
Then for $b \in \left ( \frac{1}{2}, \frac{2}{3} \right )$, there exists $\delta$ sufficiently
small, that depends on $\| u_0 \|_{H^{s}_{0,b}}$, 
so that there exists unique solution $\boldsymbol{\theta} $  to the integral equation 
\eqref{1.10} in some small ball in $T_{s,b}$. If
initial data satisfies (\ref{1.5}), then 
this corresponds to 
the unique continuous solution 
$u (\cdot, \cdot, t) \in H^{s}_0 (\mathbb{T}^2 [0, 2 \pi])$ 
satisfying (\ref{1.1})-(\ref{1.3}) in the time interval of existence.
Further, the energy $ \| u (\cdot, \cdot, t) \|_{L^2}^2$ as well as cumulative
dissipation
$\int_0^t \| u_y (\cdot, \cdot, s) \|_{L^2}^2  ds$ are globally controlled in time.
\end{Theorem}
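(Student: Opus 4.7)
My plan is to prove existence and uniqueness via a Banach fixed-point argument for $\mathcal{N}$ on a small closed ball in $T_{s,b}$, then read off the PDE solution from the fixed point, and finally derive the global bounds by a direct Fourier-based energy computation on \eqref{1.1}. Two estimates drive the contraction: a linear bound $\|\boldsymbol{\theta}^{(0)}\|_{s,b}\le C_{1}\|u_{0}\|_{H^{s}_{0,b}}$ and a bilinear bound $\|\mathcal{M}[\boldsymbol{\theta}]\|_{s,b}\le C_{2}\delta^{\kappa}\|\boldsymbol{\theta}\|_{s,b}^{2}$ (with the analogous estimate for differences), for some $\kappa>0$.

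The linear estimate should reduce to computing explicitly the time Fourier transform of $\phi(t)e^{-\alpha|t|n^{2}+il_{m,n}t}$; as a function of $\tau$ it is peaked at $\tau=l_{m,n}$ with width governed by $n^{2}+|\tau-l_{m,n}|$, which is precisely the quantity appearing in the Bourgain weight $\omega_{m,n}(\tau;b)$, and the factor $|n|^{4b-2}$ in the $H^{s}_{0,b}$ norm is calibrated to match. The bilinear estimate is the technical core: Fourier-transforming \eqref{1.8}--\eqref{1.9} in time reduces $\|\mathcal{M}[\boldsymbol{\theta}]\|_{s,b}^{2}$ to a multilinear sum in $(m,n,m',n',\tau)$ with multiplier $m$ (from the $x$-derivative in $\Lambda_{m,n}$) and resonance phase $l_{m,n}-l_{m',n'}-l_{m-m',n-n'}$, to be bounded by Plancherel and Cauchy--Schwarz against a convolution kernel. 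The derivative loss produced by $[u^{2}]_{xyy}$ is absorbed by $s>5/2$ via Sobolev-type multiplication; the non-uniform dispersion $l_{m,n}=\gamma m^{3}/n^{2}$ is controlled by the $n^{2}$ offset inside $\omega_{m,n}$, which is precisely the dissipative scale and is the reason, as noted in the introduction, that finiteness in $y$ is essential. The window $b\in(\tfrac12,\tfrac23)$ is dictated by requiring $b>\tfrac12$ for the embedding $T_{s,b}\hookrightarrow C(\mathbb{R};H^{s}_{0})$ and $b<\tfrac23$ to extract a positive power $\delta^{\kappa}$ from the cutoffs $\phi,\phi^{2}$ in \eqref{1.8}. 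Given these estimates, taking $R=2C_{1}\|u_{0}\|_{H^{s}_{0,b}}$ and $\delta$ so small that $2C_{2}\delta^{\kappa}R<1$ makes $\mathcal{N}$ a contraction of $\overline{B_{R}}\subset T_{s,b}$ into itself, and Banach's theorem yields the unique fixed point $\boldsymbol{\theta}$. On $[0,\delta]$, where $\phi\equiv1$, this fixed point solves \eqref{1.6}; the above embedding then gives a continuous $H^{s}_{0}$-valued solution of \eqref{1.1}--\eqref{1.3}, classical because $s>5/2$. The symmetry \eqref{1.5} is propagated from the initial data by uniqueness (Remark \ref{rem1}).

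For the global energy statement, I would expand $u=\sum_{n\ge1}c_{n}(x,t)\cos(ny)$ and project \eqref{1.1} onto $\cos(ny)$ to obtain the scalar equation
\[
\dot c_{n}\;=\;-\tfrac{\gamma}{n^{2}}\partial_{x}^{3}c_{n}-\alpha n^{2}c_{n}+\beta c_{n}-\partial_{x}(u^{2})_{n},
\]
where $(u^{2})_{n}$ is the $n$-th cosine coefficient of $u^{2}$. Computing $\tfrac{d}{dt}\|u\|_{L^{2}}^{2}=\pi\sum_{n}\int c_{n}\dot c_{n}\,dx$: the dispersive contribution vanishes by skew-symmetry of $\partial_{x}^{3}$; the linear terms give $-2\alpha\|u_{y}\|_{L^{2}}^{2}+2\beta\|u\|_{L^{2}}^{2}$; and, after one integration by parts in $x$, the nonlinear contribution $\pi\sum_{n}\int(\partial_{x}c_{n})(u^{2})_{n}\,dx$ reassembles (by Parseval in $y$) as $2\int u_{x}u^{2}\,dx\,dy=\tfrac{2}{3}\int(u^{3})_{x}\,dx\,dy=0$ by $x$-periodicity. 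The key structural point is that the $1/n^{2}$ inherited from inverting $\partial_{y}^{2}$ in the $u_{yyt}$ term exactly cancels the $n^{2}$ from $[\,\cdot\,]_{yy}$ in the nonlinearity, producing a perfect $x$-derivative. Therefore
\[
\tfrac{d}{dt}\|u\|_{L^{2}}^{2}\;=\;-2\alpha\|u_{y}\|_{L^{2}}^{2}+2\beta\|u\|_{L^{2}}^{2},
\]
and Poincar\'e $\|u\|_{L^{2}}\le\|u_{y}\|_{L^{2}}$ (valid since each mode has $n\ge1$) combined with Gr\"onwall yields $\|u(t)\|_{L^{2}}^{2}\le e^{2(\beta-\alpha)t}\|u_{0}\|_{L^{2}}^{2}$ globally in the interval of existence; integrating the identity in time then provides the advertised bound on $\int_{0}^{t}\|u_{y}\|_{L^{2}}^{2}\,ds$. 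The main obstacle throughout will be the bilinear estimate in $T_{s,b}$: juggling the high derivative loss in $[u^{2}]_{xyy}$ against the $n$-dependent dispersion $\gamma m^{3}/n^{2}$ inside the Bourgain norm is the core technical challenge, and it is what forces both the specific form of the weight $(n^{2}+|\tau-l_{m,n}|)^{2b}$ and the tight $(s,b)$ window.
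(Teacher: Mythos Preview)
Your contraction scheme is exactly the paper's: Lemma~\ref{lemIC} supplies your linear bound, Proposition~\ref{propM} (built on Proposition~\ref{prop0}) supplies the bilinear one with $\kappa=b'-b>0$, and the ball radius $R=2C_1\|u_0\|_{H^s_{0,b}}$ matches. Your reading of why $b\in(\tfrac12,\tfrac23)$ is needed is correct.

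Your energy derivation differs from the paper's in presentation but not in substance. The paper introduces the antiderivative $v(x,y,t)=\int_0^y u\,dy'$, writes the integrated equation \eqref{u.2}, and takes the $L^2$ inner product with $v$; you instead work mode-by-mode and reassemble the nonlinear term via Parseval in $y$ to recognize $\int u_x u^2=0$. Both routes land on the identical identity \eqref{u.3} and then Poincar\'e--Gr\"onwall. Your version is arguably more transparent about \emph{why} the nonlinearity disappears (the $n^{-2}$ from inverting $\partial_y^2$ cancels the $n^2$ from $[\,\cdot\,]_{yy}$), while the paper's $v$-trick packages the same cancellation without naming it.

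One genuine omission: you establish uniqueness only for $\boldsymbol{\theta}$ \emph{in the small ball} via Banach, but the theorem also asserts uniqueness of $u$ as a continuous $H^s_0$-valued solution of \eqref{1.1}--\eqref{1.3}. A second continuous solution $\tilde u$ need not a priori generate a $\tilde{\boldsymbol{\theta}}$ lying in your ball, so the contraction alone does not close this. The paper handles it separately in \S2.1 by an energy estimate on the difference: with $w=v-\tilde v$ one gets \eqref{u.5}, and Gr\"onwall forces $w\equiv0$ provided $\int_0^t\|u_x\|_\infty\,ds<\infty$, which holds for $s>5/2$ by Sobolev embedding. You should add this step (or an equivalent one) to justify the PDE-level uniqueness claim.
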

The proof will rely 
on a contraction argument in the space $T_{s,b}$ for small $\delta$ and 
is 
similar in spirit to the ones presented earlier in \cite{Tao}-\cite{KoenigPV}
for $1+1$ dispersive system like the KdV. What is
new here is the introduction of the space $T_{s,b}$ and a suitable
integral
reformulation (\ref{1.8}) for which dispersive regularization estimates 
through Fourier transform in time is good enough  
despite lack of uniform dispersion, and also overcomes ill-posedness in backwards time in
the original formulation (\ref{1.6}).
Also, crucial to the proof are the bounds established in \S \ref{prelm}, which rely partially on
previous estimates \cite{Tao}, \cite{KoenigPV}, 
or simple adaptation of them, which are presented in the Appendix for completeness.
The energy control and uniqueness relies on a more
traditional energy argument in classical Sobolev space.

\begin{Remark}
{\rm 
Clearly, since $b > 1/2$, $ H^s_0 \subset H^s_{0,b} \subset H^{s+(4b-2)}_0$, where
$H^s_0 $ is the subspace of the traditional Sobolev space $H^s$ with zero average in $y$. Indeed, 
since the condition for existence and uniqueness locally in time is
$ s > \frac{5}{2}$, with no restriction on
how small $b-1/2 > 0$ is, any initial data in $H^{s'}_0$ for $s' > \frac{5}{2}$ suffices.
}
\end{Remark}

\begin{Remark} 
\label{rem3}
{\rm
Global control on $\| u \|_{L^2}$ is however not enough to control $\| u (\cdot, \cdot, t )\|_{H_s^0}$ for $s > \frac{5}{2}$. Global control will be the
subject of future work.
}
\end{Remark}

\section{Key Lemmas and Proof of Theorem \ref{Thm1}}

\noindent{\bf Notation:} The symbol $ \lesssim$ is used in the following sense:  $|f| \lesssim  |g|$ is equivalent
to existence of some constant $C$ 
independent of $(m,n,m',n',\tau)$, but possibly dependent on $\alpha, \beta, \gamma, s, ~\text{and} ~b$ such that
$|f| \le C |g|$. The value of $C$ is not important.

\begin{Lemma}
\label{lemIC}
For $ \frac{1}{2} < b $,
\begin{equation}
\label{2.1}
\| \boldsymbol{\theta}^{(0)} \| 
\lesssim \| u_0 \|_{{H}^s_{0,b}}  
\end{equation}
\end{Lemma}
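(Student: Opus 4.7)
The plan is to reduce the $T_{s,b}$ norm of $\boldsymbol{\theta}^{(0)}$ to a one-variable Sobolev-type estimate for the scalar function $g_n(t) := \phi(t)\, e^{-\alpha |t| n^2}$ on the real line. From the first line of \eqref{1.8}, $\theta^{(0)}_{m,n}(t) = u_{m,n}(0)\, e^{i l_{m,n} t}\, g_n(t)$, so the time-Fourier transform is $\mathcal{F}[\theta^{(0)}_{m,n}](\tau) = u_{m,n}(0)\, \hat g_n(\tau - l_{m,n})$. Substituting into \eqref{1.11} and changing variables $\eta = \tau - l_{m,n}$ yields
\begin{equation*}
\|\boldsymbol{\theta}^{(0)}\|_{s,b}^2 \,=\, \sum_{(m,n)\in\mathbb{Z}^2_0} \rho_{m,n}\,|u_{m,n}(0)|^2\, I_n(b), \qquad I_n(b) := \int_{\mathbb{R}} \bigl(n^2 + |\eta|\bigr)^{2b}\, |\hat g_n(\eta)|^2\, d\eta.
\end{equation*}
Since the weight $\rho_{m,n}$ and the coefficients $u_{m,n}(0)$ appear identically in $\|u_0\|_{H^s_{0,b}}^2$, the lemma reduces to the scalar bound $I_n(b) \lesssim |n|^{4b-2}$, uniformly in $|n|\ge 1$.

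For the scalar estimate I would split $(n^2+|\eta|)^{2b} \lesssim n^{4b} + |\eta|^{2b}$ (valid for $b>0$) and apply Plancherel, giving $I_n(b) \lesssim n^{4b}\,\|g_n\|_{L^2_t}^2 + \|g_n\|_{\dot H^b_t}^2$. The $L^2$ bound is immediate: $|g_n(t)|\le e^{-\alpha|t|n^2}$ yields $\|g_n\|_{L^2_t}^2 \lesssim n^{-2}$, so the first term contributes $\lesssim n^{4b-2}$. For the fractional Sobolev term I interpolate between $L^2$ and $H^1$: the distributional derivative is
\begin{equation*}
g_n'(t) = \phi'(t)\, e^{-\alpha|t|n^2} \,-\, \alpha n^2\, \sgn(t)\, \phi(t)\, e^{-\alpha|t|n^2},
\end{equation*}
with no Dirac contribution at $t=0$ because $g_n$ is continuous there. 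The dominant $\alpha n^2$ prefactor combined with the exponential tail yields $\|g_n\|_{H^1_t}^2 \lesssim n^2$, and the standard interpolation inequality $\|g_n\|_{\dot H^b}^2 \le \|g_n\|_{L^2}^{2(1-b)}\|g_n'\|_{L^2}^{2b}$ (proved, say, by splitting the frequency integral at $|\eta|\sim\|g_n'\|_{L^2}/\|g_n\|_{L^2}$, and valid for any $b\in(0,1)$, hence in our range $b\in(1/2,2/3)$) then gives $\|g_n\|_{\dot H^b_t}^2 \lesssim n^{-2(1-b)}\cdot n^{2b} = n^{4b-2}$.

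Combining the two contributions yields $I_n(b)\lesssim |n|^{4b-2}$, whence
\begin{equation*}
\|\boldsymbol{\theta}^{(0)}\|_{s,b}^2 \,\lesssim\, \sum_{(m,n)\in\mathbb{Z}^2_0} |n|^{4b-2}\,\rho_{m,n}\,|u_{m,n}(0)|^2 \,=\, \|u_0\|_{H^s_{0,b}}^2,
\end{equation*}
which is \eqref{2.1}. I do not expect a serious obstacle: the key observation is that the decay rate $\alpha n^2$ in $g_n$ is exactly calibrated to produce the loss $|n|^{4b-2}$ that the $H^s_{0,b}$ norm already anticipates, and the restriction $b<1$ is precisely what keeps the $L^2$–$H^1$ interpolation clean. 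The only small verification worth making explicit is the continuity of $g_n$ at $t=0$ (so that $|t|$'s non-smoothness contributes no Dirac mass to $g_n'$), after which everything is standard.
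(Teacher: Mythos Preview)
Your argument is correct. The reduction to the scalar bound $I_n(b)\lesssim |n|^{4b-2}$ is exactly the right move, and the $L^2$--$H^1$ interpolation on $g_n(t)=\phi(t)e^{-\alpha n^2|t|}$ (via H\"older on the Fourier side) delivers this cleanly for $b\in(1/2,1)$. The continuity check at $t=0$ is indeed the only point requiring care, and you handled it.

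The paper takes a different but equally short route: it drops the cutoff $\phi$, computes the Fourier transform of $e^{-\alpha n^2|t|+il_{m,n}t}$ explicitly as $\dfrac{2\alpha n^2}{\alpha^2 n^4+(\tau-l_{m,n})^2}$, integrates this squared against the weight $(n^2+|\tau-l_{m,n}|)^{2b}$ to obtain $|n|^{4b-2}$ directly, and then invokes Lemma~\ref{lem4.1} (with $b'=b$) to reinstate the factor $\phi$ at no cost. Your approach absorbs $\phi$ into $g_n$ from the start and therefore never needs Lemma~\ref{lem4.1}; the price is that you forgo the exact Fourier transform and instead estimate $\|g_n\|_{L^2}$ and $\|g_n'\|_{L^2}$ in physical space before interpolating. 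Both methods are elementary; yours is slightly more self-contained, while the paper's explicit computation has the mild advantage of working verbatim up to $b<3/2$ (the integral $\int n^4(n^2+|\tau|)^{2b-4}\,d\tau$ converges there), whereas your interpolation as written is limited to $b<1$. In the operative range $b\in(1/2,2/3)$ this distinction is irrelevant.
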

\begin{proof}
First, we note from computation that
$$\mathcal{F} \left [ \exp \left ( -\alpha n^2  |t| + i l_{m,n} t \right )\right ] (\tau) 
= \frac{2 \alpha n^2}{(\alpha^2 n^4 + \left ( \tau - l_{m,n} \right )^2}
$$
Therefore, 
$$
\int_{\mathbb{R}} \left ( n^2 + \Big |\tau-l_{m,n} \Big | \right )^{2b} 
\Big | \mathcal{F} \left [ e^{-\alpha n^2 |t| + i l_{m,n} t } \right ] (\tau)  \Big |^2
\lesssim \int_{\mathbb{R}} \frac{n^4 d\tau}{\left (n^2 + \Big | \tau - l_{m,n} \Big | \right )^{4-2b}} 
\lesssim |n|^{4b-2} 
$$
Using the above inequality and applying Lemma \ref{lem4.1} to  
$\boldsymbol{\theta}^{(0)}$ (see \eqref{1.8} and \eqref{1.10}), it follows that   
$$ \| \boldsymbol{\theta}^{(0)} \|_{s,b}^2 \lesssim   
\sum_{(m,n) \in \mathbb{Z}^2_{0}} |n|^{4b-2}
\left (|m| + |n| \right )^{2s}  |  u_{m,n} (0)  |^2  
\lesssim 
\| u_0 \|^2_{H^s_{0,b}} $$
\end{proof}
\begin{Proposition}
\label{prop0}
For $ b \in \left ( \frac{1}{2}, \frac{2}{3} \right ) $, $ b \le b' \le \min \{2 b-\frac{1}{2}, \frac{2}{3} \} $, and 
$s > \frac{5}{2} $,
\begin{equation}
\sup_{(m,n) \in \mathbb{Z}_{0}^2, \tau\in \mathbb{R}} 
\left [ \sum_{(m',n') \in \mathbb{Z}^{2}_{0,n} }
\int_{\mathbb{R}} \frac{m^2 W_{m,n} (\tau; b') 
d\tau_1}{|n^2 +i (\tau-l_{m,n})|^2 W_{m',n'} (\tau_1; b) W_{m-m',n-n'} (\tau-\tau_1; b)} 
\right ] \lesssim 1
\end{equation}
\end{Proposition}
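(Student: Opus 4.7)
The estimate is the core bilinear estimate that will drive the contraction in $T_{s,b}$. My plan has three stages. First, I would use the elementary inequality $|n^2+i(\tau-l_{m,n})|^2 = n^4+(\tau-l_{m,n})^2 \gtrsim (n^2+|\tau-l_{m,n}|)^2 = \omega_{m,n}(\tau;1)$, so the quotient $W_{m,n}(\tau;b')/|n^2+i(\tau-l_{m,n})|^2$ is dominated by $\rho_{m,n}(n^2+|\tau-l_{m,n}|)^{2b'-2}$. Since $b'\le 2/3<1$, the exponent $2b'-2$ is negative, so this is further bounded by $\rho_{m,n}|n|^{4b'-4}$. This kills the $\tau$-dependence of the weight and reduces the estimate to a purely frequency-space sum once the $\tau_1$-integral has been performed.

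Second, I would change variables to $\sigma = \tau - l_{m,n}$, $\sigma_1 = \tau_1 - l_{m',n'}$, $\sigma_2 = (\tau-\tau_1) - l_{m-m',n-n'}$, which satisfy the resonance identity
$$\sigma_1 + \sigma_2 = \sigma - R, \qquad R(m,m';n,n') := l_{m,n} - l_{m',n'} - l_{m-m',n-n'} = \gamma\Big[\frac{m^3}{n^2}-\frac{(m')^3}{(n')^2}-\frac{(m-m')^3}{(n-n')^2}\Big].$$
I would then apply a standard calculus integral estimate of the type collected in the Appendix to dominate $\int (n'^2+|\sigma_1|)^{-2b}((n-n')^2+|\sigma_2|)^{-2b}\,d\tau_1$ by a quantity of the form
$$\frac{C}{[\min(n'^2,(n-n')^2)]^{2b-1}\,[\min(n'^2,(n-n')^2)+|\sigma-R|]^{2b}},$$
valid for $b>1/2$. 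The presence of the dissipative shift $n^2$ inside each weight $\omega$ is precisely what prevents the $\tau_1$-integral from diverging and what yields the smoothing factor $\min^{2b-1}$.

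Third, I would sum over $(m',n')\in\mathbb{Z}^2_{0,n}$ uniformly in $(m,n,\tau)$. By the symmetry $(m',n')\leftrightarrow(m-m',n-n')$, assume $|m'|+|n'|\ge|m-m'|+|n-n'|$; then $|m|+|n|\lesssim|m'|+|n'|$, yielding the frequency gain $\rho_{m,n}/(\rho_{m',n'}\rho_{m-m',n-n'})\lesssim (|m-m'|+|n-n'|)^{-2s}$. I would then split into a resonant regime, where $|\sigma-R|$ is small compared to $\min(n'^2,(n-n')^2)$ and the dissipative $\min$-factor absorbs $m^2|n|^{4b'-4}$, and a non-resonant regime, where I would extract a lower bound on $|R|$ from the cubic dispersion relation. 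Clearing denominators, $n^2(n')^2(n-n')^2 R/\gamma$ is a polynomial in the frequencies that factors as $mm'(m-m')$ times a quadratic expression in the generic case; the degenerate cases ($m'=0$, $m=m'$, $n'=n-n'$, etc.) are handled separately by direct inspection.

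The main obstacle is the non-uniformity of the dispersion $l_{m,n}=\gamma m^3/n^2$, which is large in $m$ but weakened by small $n$. This is where the upper cap $b'\le 2b-\tfrac12$ and the Sobolev threshold $s>\tfrac{5}{2}$ enter in a coupled way: the threshold $s>\tfrac{5}{2}$ supplies the two derivatives required by $[u^2]_x$ together with the $\tfrac12$-loss arising from the smoothing factor $[\min(n'^2,(n-n')^2)]^{1-2b}$, while the cap on $b'$ keeps $|n|^{4b'-4}$ large enough to balance small $|n|$ against large $|m|$ when the resonance $R$ is small. The bulk of the work will lie in the Step 3 case analysis, verifying in each subcase that the parameter constraints reduce the sum to something comparable to $\sum (|m-m'|+|n-n'|)^{-2s}<\infty$.
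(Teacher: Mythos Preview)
Your Stage 1 contains the fatal step. You are right that $W_{m,n}(\tau;b')/|n^2+i(\tau-l_{m,n})|^2 \lesssim \rho_{m,n}\,(n^2+|\tau-l_{m,n}|)^{2b'-2}$, but you then \emph{discard} the factor $(n^2+|\tau-l_{m,n}|)^{2b'-2}$ in favour of $|n|^{4b'-4}$. Once you do that, your Stage 3 dichotomy collapses: you are computing a supremum over $\tau$, so for every fixed $(m',n')$ the worst choice is $\sigma=R$, making $|\sigma-R|=0$. There is no ``non-resonant regime'' left, and your bound after Stages 1--2 becomes
\[
\sup_\tau\;\sum_{(m',n')}\frac{m^2\,|n|^{4b'-4}\,\rho_{m,n}}
{\rho_{m',n'}\rho_{m-m',n-n'}\,[\min(n'^2,(n-n')^2)]^{4b-1}}.
\]
Take $n$ fixed, $n'=n-1$, and $m'$ within $O(1)$ of $m$; the summand is then $\gtrsim m^2|n|^{4b'-4}$, which is unbounded as $|m|\to\infty$. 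The dissipative $\min$-factor cannot absorb $m^2$; only the dispersion in $l_{m,n}$ can, and you have thrown it away.

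The paper's cure is exactly to \emph{retain} $(n^2+|\tau-l_{m,n}|)^{2b'-2}$ and to combine it, after the $\tau_1$-integration, with the remaining factor $(n'^2+|\tau-l_{m',n'}-l_{m-m',n-n'}|)^{-2b}$ via the calculus inequality of Lemma~\ref{lem4.2.a}. This produces the $\tau$-independent quantity
\[
Q_{m,n,m',n'}=|n|^{-4b}\bigl(n^2+|l_{m,n}-l_{m',n'}-l_{m-m',n-n'}|\bigr)^{2b'-2},
\]
so the resonance $R$ enters with the correct negative exponent $2b'-2$, which is what beats $m^2$ in the generic region. Your Stage~3 sketch also understates the structure of $R$: writing $\eta=m'/m$, $\zeta=n'/n$, the normalised resonance is $f(\eta,\zeta)=(2\zeta-1)(\eta-\zeta)^2(\eta-g(\zeta))/[\zeta^2(1-\zeta)^2]$, whose zero set consists of two curves rather than the KdV-type factor $mm'(m-m')$. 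The delicate part of the paper is the analysis near the intersections of these curves (neighbourhoods of $(\eta,\zeta)=(1,1)$ and $(0,2)$), and the line $n'=2n$ requires an entirely separate KdV-style argument (Lemma~\ref{lem4.6}) that your degenerate-case list does not anticipate.
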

The proof of Proposition \S \ref{prop0} is given at the end of 
\S \ref{prelm} after some bounds are established.
\begin{Lemma}
\label{lem4.5}
Define 
\begin{equation}
b_{m,n} (t) = \sum_{(m',n') \in \mathbb{Z}^2_{0,n} } 
\int_{\mathbb{R}} H(t-s) e^{(-\alpha n^2+i l_{m,n}) (t-s)}  {\hat \theta}_{m',n'} (s) {\hat \beta}_{m-m', n-n'} (s) ds 
\ ,
\end{equation}
where ${\hat \theta}_{m,n} (t) = \psi (t) \theta_{m,n} (t)$, ${\hat \beta}_{m,n} (t) = \psi (t) \beta_{m,n} (t)$,
$\psi (t) = e^{\beta t/2} \phi (t)$.
With the restriction on $b$, $b'$, $s$ in Proposition \ref{prop0},
\begin{equation}
\label{eqlem4.5}
\sum_{ (m,n) \in \mathbb{Z}^2_{0}} 
\int_{\mathbb{R}} W_{m,n} (\tau; b) m^2 \Big | \mathcal{F} [ \phi b_{m,n} ] (\tau) \Big |^2 d\tau  
\lesssim \delta^{2b'-2b} 
\| \boldsymbol{\theta} \|^2_{s,b}
\| \boldsymbol{\beta} \|^2_{s,b}
\end{equation}
\end{Lemma}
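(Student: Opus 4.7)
The strategy has two stages: first, trade the outer $\phi$ cutoff for the factor $\delta^{2b'-2b}$ at the cost of upgrading the $\tau$-weight exponent from $b$ to $b'$, via a Bourgain-type cutoff lemma (to be established in Section~\ref{prelm}); then bound the resulting $b'$-weighted quantity by a bilinear estimate combining Cauchy--Schwarz with Proposition~\ref{prop0}. The restriction $b \le b' \le \min\{2b - \tfrac{1}{2}, \tfrac{2}{3}\}$ is precisely the range in which both pieces apply.

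\textbf{Fourier representation of $b_{m,n}$.} Since $b_{m,n}$ is the convolution in $t$ of the Duhamel kernel $K_{m,n}(t) = H(t) e^{(-\alpha n^2 + i l_{m,n}) t}$, whose Fourier transform equals $\{\alpha n^2 + i(\tau - l_{m,n})\}^{-1}$, with $\sum \hat\theta_{m',n'} \hat\beta_{m-m',n-n'}$, I would write
\[ \mathcal{F}[b_{m,n}](\tau) = \frac{1}{\alpha n^2 + i(\tau - l_{m,n})} \sum_{(m',n') \in \mathbb{Z}^2_{0,n}} \int_{\mathbb{R}} F_{m',n'}(\tau_1)\, G_{m-m',n-n'}(\tau - \tau_1)\, d\tau_1 , \]
with $F_{m,n} = \mathcal{F}[\hat\theta_{m,n}]$ and $G_{m,n} = \mathcal{F}[\hat\beta_{m,n}]$ (modulo constant $2\pi$ factors).

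\textbf{Reduction and bilinear estimate.} The cutoff lemma should take the form: for $\phi$ with support in $[-2\delta, 2\delta]$ and $\tfrac{1}{2} < b \le b'$,
\[ \int_{\mathbb{R}} (n^2 + |\tau - l_{m,n}|)^{2b} |\mathcal{F}[\phi g](\tau)|^2 d\tau \lesssim \delta^{2(b'-b)} \int_{\mathbb{R}} (n^2 + |\tau - l_{m,n}|)^{2b'} |\mathcal{F}[g](\tau)|^2 d\tau . \]
Specializing to $g = b_{m,n}$ and summing against $\rho_{m,n} m^2$ reduces the LHS of \eqref{eqlem4.5} to $\delta^{2(b'-b)} \sum \int W_{m,n}(\tau;b')\, m^2\, |\mathcal{F}[b_{m,n}](\tau)|^2\, d\tau$. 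Substituting the Fourier representation above and applying Cauchy--Schwarz on the $(m',n',\tau_1)$ sum-integral against the weight $\{W_{m',n'}(\tau_1;b) W_{m-m',n-n'}(\tau - \tau_1;b)\}^{1/2}$, the integrand is bounded by $\Sigma_1(m,n,\tau) \cdot \Sigma_2(m,n,\tau)$, where
\[ \Sigma_1(m,n,\tau) = \frac{m^2 W_{m,n}(\tau;b')}{|\alpha n^2 + i(\tau - l_{m,n})|^2} \sum_{(m',n')} \int_{\mathbb{R}} \frac{d\tau_1}{W_{m',n'}(\tau_1;b) W_{m-m',n-n'}(\tau-\tau_1;b)} \]
is precisely the sum controlled by $O(1)$ in Proposition~\ref{prop0}, and
\[ \Sigma_2(m,n,\tau) = \sum_{(m',n')} \int_{\mathbb{R}} W_{m',n'}(\tau_1;b) W_{m-m',n-n'}(\tau-\tau_1;b) |F_{m',n'}(\tau_1)|^2 |G_{m-m',n-n'}(\tau-\tau_1)|^2 d\tau_1 . \]
Taking $\sup_{m,n,\tau} \Sigma_1 \lesssim 1$ and then summing/integrating $\Sigma_2$ in $(m,n,\tau)$, Fubini and the change of variable $\sigma = \tau - \tau_1$ decouple the result into $\|\hat{\boldsymbol\theta}\|^2_{s,b} \|\hat{\boldsymbol\beta}\|^2_{s,b}$. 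Since $\psi = e^{\beta t/2} \phi$ is smooth with compact support, multiplication by $\psi$ is bounded on $T_{s,b}$, so $\|\hat{\boldsymbol\theta}\|_{s,b} \lesssim \|\boldsymbol\theta\|_{s,b}$ and similarly for $\boldsymbol\beta$, closing the estimate.

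\textbf{Main obstacle.} The subtle ingredient is the cutoff lemma itself: unlike the classical $X^{s,b}$ setting in which the modulation weight is $(1 + |\tau|)^{2b}$, here the weight $(n^2 + |\tau - l_{m,n}|)^{2b}$ is anisotropic because of the $n^2$ inherited from diffusion in $y$. I would split into the regimes $|\tau - l_{m,n}| \le n^2$, where the weight is essentially $n^{4b}$ and $L^\infty$ control from the compact support of $\phi$ suffices, and $|\tau - l_{m,n}| > n^2$, where a modulated version of the standard cutoff argument applies. Reconciling the $\delta$-gain with this anisotropic weight is where I expect most of the technical work to lie.
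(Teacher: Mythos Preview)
Your proposal is correct and follows essentially the same route as the paper: apply the cutoff lemma (the paper's Lemma~\ref{lem4.1}) to trade $\phi$ for $\delta^{2b'-2b}$ at the cost of lifting the weight exponent to $b'$, write $\mathcal{F}[b_{m,n}]$ via the Duhamel kernel and convolution, apply Cauchy--Schwarz in $(m',n',\tau_1)$ against the weight $W_{m',n'}W_{m-m',n-n'}$, invoke Proposition~\ref{prop0} for the sup factor, and decouple the remaining double sum via Fubini; the final step $\|\hat{\boldsymbol\theta}\|_{s,b}\lesssim\|\boldsymbol\theta\|_{s,b}$ is exactly Lemma~\ref{lem4.1} applied with $b'=b$ and $\psi$ in place of $\phi$, as the paper also notes. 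One remark on your ``main obstacle'': the anisotropic cutoff lemma is proved in the paper's Appendix not by splitting at $|\tau-l_{m,n}|=n^2$ but at $n^2+|\tau-l_{m,n}|=1/\delta$, which is the threshold that produces the $\delta^{b'-b}$ gain directly; your suggested split would not by itself generate the $\delta$-dependence.
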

\begin{proof}
First, note that if we apply Lemma \ref{lem4.1} with $q = b_{m,n}$, it follows that the
left hand side of
\eqref{eqlem4.5} 
\begin{equation}
\label{eqlem4.5p}
\lesssim \delta^{2b'-2b} \sum_{ (m,n) \in \mathbb{Z}^2_{0}} 
\int_{\mathbb{R}} W_{m,n} (\tau;b') m^2 \Big | \mathcal{F} [ b_{m,n} ] (\tau) \Big |^2 d\tau  
\end{equation}
We note from convolution form of $b_{m,n}$ that  
\begin{equation}
\mathcal{F} [b_{m,n} ] (\tau) 
= \sum_{(m',n') \in \mathbb{Z}^2_{0}} \frac{1}{\alpha n^2+i (\tau-l_{m,n})} 
\int_{\mathbb{R}} \mathcal{F} [{\hat \theta}_{m',n'}] (\tau_1)  \mathcal{F} [{\hat \beta}_{m-m, n-n'}] (\tau-\tau_1) d\tau_1 
\end{equation}
So, applying Cauchy-Schwartz inequality it follows that the expression 
in \eqref{eqlem4.5p}
\begin{multline}
\lesssim 
\delta^{2b'-2b} \sup_{(m,n) \in \mathbb{Z}^2_0, \tau \in \mathbb{R}} 
\left \{ \sum_{(m',n')} \int_{\mathbb{R}} \frac{m^2 W_{m,n} (\tau; b') d\tau_1}{|n^2+i (\tau-l_{m,n})|^2 
W_{m',n'} (\tau_1; b) W_{m-m',n-n'} (\tau-\tau_1; b)} 
\right \} \\
\times \left \{ \sum_{(m,n)} \sum_{(m',n')} \int_{\mathbb{R}} 
\int_{\mathbb{R}} W_{m',n'} (\tau_1; b) \Big | \mathcal{F} [{\hat \theta}_{m',n'} ] (\tau_1) \Big |^2  \right .\\
\left.
\times W_{m-m',n-n'} (\tau-\tau_1; b) \Big | \mathcal{F} [{\hat \beta}_{m-m',n-n'}] (\tau-\tau_1) \Big |^2 
d\tau_1 d\tau\right \} 
\ ,
\end{multline}
from which the Lemma follows if we apply Proposition \ref{prop0} and Lemma \ref{lem4.1}, 
$q=\theta_{m',n'}$ or $q=\beta_{m-m',n-n'}$ and $\phi$ replaced by $\psi$, which is also compactly
supported smooth function in $(-2\delta, 2 \delta)$.
\end{proof}
\begin{Lemma}
\label{lem4.4}
Under the conditions of Proposition \ref{prop0}, 
for $B_{m,n}$ with support in $(-2\delta, 2 \delta)$ if the integral on the right side in
\eqref{eqlem4.4} exists, then
\begin{multline}
\label{eqlem4.4}
\int_{\mathbb{R}} \omega_{m,n} (\tau; b)\Big | \mathcal{F} \left [ 
\phi (t) \int_{-2\delta}^0 e^{i l_{m,n} (t-s) -\alpha n^2 (|t|-s)} B_{m,n} (s) ds \right ] (\tau) \Big |^2 d\tau 
\\
\lesssim \int_{\mathbb{R}} \omega_{m,n} (\tau; b) 
\Big | \mathcal{F} \left [ \phi (t) \int_{\mathbb{R}} H (t-s) e^{[i l_{m,n} -\alpha n^2 ] (t-s)} 
B_{m,n} (s) ds \right ] \Big |^2 (\tau) d\tau  
\end{multline}
\end{Lemma}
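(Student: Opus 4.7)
My plan is to reduce the inequality to two weighted-integral computations of the type already carried out in Lemma \ref{lemIC}: one that estimates a free-propagator expression from its ``initial value,'' and one that recovers that initial value from the Bourgain-type norm via Fourier inversion. The crucial algebraic observation is that, because the inner integration on the left-hand side is restricted to $s\in(-2\delta,0)$, where $s\le 0$, one has $|t|-s=|t|+(-s)$, and the integrand factors as
\[
e^{il_{m,n}(t-s)-\alpha n^{2}(|t|-s)}B_{m,n}(s)
=\bigl[e^{il_{m,n}t-\alpha n^{2}|t|}\bigr]\,\bigl[e^{(-il_{m,n}+\alpha n^{2})s}B_{m,n}(s)\bigr].
\]
Setting $C_{m,n}:=\int_{-2\delta}^{0}e^{(-il_{m,n}+\alpha n^{2})s}B_{m,n}(s)\,ds$, the function appearing inside the Fourier transform on the left is therefore $C_{m,n}\,\phi(t)\,e^{il_{m,n}t-\alpha n^{2}|t|}$, i.e.\ pure free propagation from the value $C_{m,n}$. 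Moreover, evaluating the integrand on the right-hand side at $t=0$ (using $\phi(0)=1$ and $\mathrm{supp}\,B_{m,n}\subset(-2\delta,2\delta)$) one sees that this same quantity equals $Q(0)$, where $Q$ denotes the function inside the Fourier transform on the right.

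Granted this, the LHS is $|C_{m,n}|^{2}$ times the square of the $\omega_{m,n}(\cdot;b)$-weighted norm of $\mathcal{F}[\phi(t)e^{il_{m,n}t-\alpha n^{2}|t|}]$. Using the explicit Fourier transform computed in the proof of Lemma \ref{lemIC}, together with Lemma \ref{lem4.1} to absorb the $\phi$ factor, this weighted norm is $\lesssim |n|^{4b-2}$. To control $|C_{m,n}|^{2}$ by the RHS, I would use Fourier inversion at $t=0$ and Cauchy--Schwarz:
\[
|C_{m,n}|^{2}=|Q(0)|^{2}\le\frac{1}{(2\pi)^{2}}\left(\int_{\mathbb{R}}\frac{d\tau}{\omega_{m,n}(\tau;b)}\right)\left(\int_{\mathbb{R}}\omega_{m,n}(\tau;b)\bigl|\mathcal{F}[Q](\tau)\bigr|^{2}\,d\tau\right).
\]
A direct calculation valid for $b>\tfrac{1}{2}$ gives $\int d\tau/(n^{2}+|\tau-l_{m,n}|)^{2b}\lesssim |n|^{2-4b}$, so $|C_{m,n}|^{2}\lesssim |n|^{2-4b}\cdot(\mathrm{RHS})$. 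Multiplying this by the $|n|^{4b-2}$ factor coming from the LHS estimate cancels the powers of $n$ and yields the desired inequality.

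The main obstacle, in my view, is spotting the initial factorization and the identity $C_{m,n}=Q(0)$; once these are seen, the rest is a rerun of the weight-integral estimates already used in Lemma \ref{lemIC}. A secondary technical point is justifying the pointwise Fourier inversion at $t=0$: the assumed finiteness of the RHS together with $\int d\tau/\omega_{m,n}(\tau;b)<\infty$ (for $b>\tfrac{1}{2}$) forces $\mathcal{F}[Q]\in L^{1}(\mathbb{R})$ by Cauchy--Schwarz, hence $Q$ is continuous and $Q(0)=\tfrac{1}{2\pi}\int\mathcal{F}[Q](\tau)\,d\tau$, so all steps above are rigorous.
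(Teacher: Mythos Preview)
Your argument is correct and follows essentially the same route as the paper's proof: both factor the left-hand integrand as $e^{il_{m,n}t-\alpha n^{2}|t|}$ times the constant $C_{m,n}$, estimate the weighted Fourier norm of the propagator by $|n|^{4b-2}$ via the explicit transform and Lemma~\ref{lem4.1}, and then control $|C_{m,n}|^{2}$ by the right-hand side with the compensating factor $|n|^{2-4b}$. The only cosmetic difference is that where you write out the Fourier-inversion/Cauchy--Schwarz bound for $|Q(0)|^{2}$ directly, the paper packages that same computation as an appeal to Lemma~\ref{lem4.3} (the $\sup_{t}$ estimate), whose proof is precisely the inequality you derive.
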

\begin{proof}
We note from Lemma \ref{lem4.1} and \ref{lem4.3}, that
\begin{equation}
\begin{aligned}
\label{28}
\int_{\mathbb{R}} \omega_{m,n} (\tau; b)&\Big | \mathcal{F} \left [ 
\phi (t) \int_{-2\delta}^0 e^{i l_{m,n} (t-s) -\alpha n^2 (|t|-s)} B_{m,n} (s) ds \right ] (\tau) \Big |^2 d\tau 
\\
&\lesssim
\left [ \int_{\mathbb{R}} \omega_{m,n} (\tau; b)\Big | \mathcal{F} \left [ e^{i l_{m,n} t - \alpha n^2 |t|} \right ] (\tau) \Big |^2 d\tau \right ] 
\Big | \int_{-2\delta}^0 e^{-i l_{m,n} s +\alpha n^2 s} B_{m,n} (s) ds \Big |^2 \\ 
&\lesssim \left ( \int_{\mathbb{R}} \frac{\omega_{m,n} (\tau; b)n^4}{\left (n^2+|\tau-l_{m,n}| \right )^4} d\tau \right )
\Big | \int_{\mathbb{R}} H(-s) e^{-i l_{m,n} s +\alpha n^2 s} B_{m,n} (s) ds \Big |^2  \\
&\lesssim \int_{\mathbb{R}} \frac{n^4 d\tau}{\left (n^2+|\tau-l_{m,n}| \right )^{4-2b}} 
\sup_{t} \Big | 
\phi(t)\int_{\mathbb{R}} H(t-s) e^{i l_{m,n} (t-s) -\alpha n^2 (t-s)} B_{m,n} (s) ds \Big |^2  \\
&\lesssim 
\left ( \int_{0}^\infty \frac{|n|^{6-4b} d\tau}{(n^2+\tau)^{4-2b}} \right )
\int_{\mathbb{R}} \left ( n^2 + |\tau-l_{m,n}| \right )^{2b}  \\
&\times \Big | \mathcal{F} \left [\phi(t)  
\int_{\mathbb{R}} H(t-s) e^{i l_{m,n} (t-s) -\alpha n^2 (t-s)} B_{m,n} (s) ds \right ] (\tau)  \Big |^2 d\tau \\ 
\end{aligned}
\end{equation}
from which Lemma follows readily.
\end{proof}
\begin{Lemma} 
\label{lemextra}
Assume that conditions of Proposition \ref{prop0} hold and that
$\boldsymbol{\theta}, \boldsymbol{\beta} \in T_{s,b}$.
Define
\begin{equation}
d_{m,n} = e^{i l_{m,n} t - \alpha n^2 |t|} \int_{\mathbb{R}} H(-s) e^{(\alpha n^2 - i l_{m,n} )s}
e^{\beta s} \phi^2 (s) \Lambda_{m,n} (s) ds \ ,
\end{equation} 
where
\begin{equation}
\Lambda_{m,n} (t) = -i m\sum_{(m',n') \in \mathbb{Z}^2_{0,n}}
\theta_{m',n'} (t) \beta_{m-m', n-n'} (t) . 
\end{equation}
Then
\begin{equation}
\sum_{(m,n) \in \mathbb{Z}^ 2_0}
 \int_{\mathbb{R}} W_{m,n} (\tau; b) \Big | \mathcal{F} [ \phi d_{m,n}] (\tau) \Big |^2 d\tau
\lesssim \delta^{2 b'-2b} 
\| \boldsymbol{\theta} \|_{s,b}^2
\| \boldsymbol{\beta} \|_{s,b}^2
\end{equation}
\end{Lemma}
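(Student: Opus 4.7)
The plan is to reduce the statement to the already-established Lemmas \ref{lem4.4} and \ref{lem4.5} by a short bookkeeping argument. First I would rewrite $d_{m,n}$ in the form that Lemma \ref{lem4.4} directly controls: combining the prefactor with the integrand gives
\[
d_{m,n}(t) \;=\; \int_{\mathbb{R}} H(-s)\, e^{\,i l_{m,n}(t-s) - \alpha n^2(|t|-s)}\, B_{m,n}(s)\, ds,
\qquad B_{m,n}(s) := e^{\beta s}\phi^{2}(s)\Lambda_{m,n}(s),
\]
and since $\phi^{2}$ localises $B_{m,n}$ in $(-2\delta,2\delta)$, the integral is effectively over $s\in(-2\delta,0)$, which is exactly the left-hand side of \eqref{eqlem4.4}.

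Next I would apply Lemma \ref{lem4.4} term-by-term in $(m,n)$ to obtain
\[
\int_{\mathbb{R}} \omega_{m,n}(\tau;b)\,\bigl|\mathcal{F}[\phi d_{m,n}](\tau)\bigr|^{2}\, d\tau
\;\lesssim\;
\int_{\mathbb{R}} \omega_{m,n}(\tau;b)\,\Bigl|\mathcal{F}\Bigl[\phi(t)\!\int_{\mathbb{R}}\! H(t-s)e^{[il_{m,n}-\alpha n^{2}](t-s)}B_{m,n}(s)\,ds\Bigr](\tau)\Bigr|^{2}\, d\tau.
\]
The crucial observation is now that, writing $\psi(t)=e^{\beta t/2}\phi(t)$ and splitting $e^{\beta s}\phi^{2}(s)=\psi(s)\cdot\psi(s)$, the factor $-im$ in $\Lambda_{m,n}$ pulls out and the inner object becomes precisely $-im\,\phi(t)\,b_{m,n}(t)$ in the notation of Lemma \ref{lem4.5}, with $\hat\theta_{m',n'}=\psi\theta_{m',n'}$ and $\hat\beta_{m-m',n-n'}=\psi\beta_{m-m',n-n'}$.

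Multiplying the resulting pointwise bound by $\rho_{m,n}=(|m|+|n|)^{2s}$ and summing over $(m,n)\in\mathbb{Z}^{2}_{0}$ then gives
\[
\sum_{(m,n)\in\mathbb{Z}^{2}_{0}}\!\int_{\mathbb{R}} W_{m,n}(\tau;b)\,\bigl|\mathcal{F}[\phi d_{m,n}](\tau)\bigr|^{2}\,d\tau
\;\lesssim\;
\sum_{(m,n)\in\mathbb{Z}^{2}_{0}}\!\int_{\mathbb{R}} W_{m,n}(\tau;b)\, m^{2}\,\bigl|\mathcal{F}[\phi b_{m,n}](\tau)\bigr|^{2}\, d\tau,
\]
and the right-hand side is exactly what Lemma \ref{lem4.5} controls by $\delta^{2b'-2b}\|\boldsymbol{\theta}\|^{2}_{s,b}\|\boldsymbol{\beta}\|^{2}_{s,b}$, finishing the proof.

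I do not expect a genuine obstacle here: the real work (the dispersive-convolution estimate of Proposition \ref{prop0}, the time-cut-off gain $\delta^{2b'-2b}$ from Lemma \ref{lem4.1}, and the removal of the causal integral from a non-causal boundary term in Lemma \ref{lem4.4}) has already been done. The only thing that needs care is checking that the exponential factor $e^{\beta s}$ inside $B_{m,n}$ really does redistribute as $\psi(s)\psi(s)$ so that the hypothesis of Lemma \ref{lem4.5} applies verbatim, and that the $-im$ coming from $\Lambda_{m,n}$ is the same $m^{2}$ weight appearing on the left of \eqref{eqlem4.5}. Both are immediate from the definitions.
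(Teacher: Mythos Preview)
Your proposal is correct and follows essentially the same route as the paper: apply Lemma \ref{lem4.4} with $B_{m,n}(s)=e^{\beta s}\phi^{2}(s)\Lambda_{m,n}(s)$, factor $e^{\beta s}\phi^{2}(s)=\psi(s)\psi(s)$ so that $B_{m,n}=-im\sum\hat\theta_{m',n'}\hat\beta_{m-m',n-n'}$, and then invoke Lemma \ref{lem4.5}. Your write-up simply spells out the bookkeeping that the paper leaves implicit.
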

\begin{proof}
We apply Lemma
\ref{lem4.4} 
$$
B_{m,n} (t) = \phi^2 (t) e^{\beta t} \Lambda_{m,n} (t) =-i m
\sum_{(m'n')\in \mathbb{Z}^2_{0,n}} {\hat \theta}_{m',n'} (t) {\hat \beta}_{m-m',n-n'} (t) 
$$     
where ${\hat \theta}_{m',n'} (t) = e^{\beta t/2} \phi (t) \theta_{m',n'} (t)$
and ${\hat \beta}_{m-m',n-n'} (t) = e^{\beta t/2} \phi (t) \beta_{m-m',n-n'} (t)$, 
and then use Lemma  
\ref{lem4.5}.
\end{proof}
\begin{Proposition}
\label{propM}
Under the conditions of Proposition \ref{prop0}, except for $b' > b$,
for $\mathcal{M}$ defined in \eqref{1.8}-\eqref{1.10},
there exists constant $c_1$ independent of $\delta$ such that
for $\boldsymbol{\theta}^{(1)}, \boldsymbol{\theta}^{(2)} \in T_{s,b}$,
\begin{equation}
\| \mathcal{M} [\boldsymbol{\theta}^{(1)} ] 
- \mathcal{M} [\boldsymbol{\theta}^{(2)} ] \|_{s,b} 
\le c_1 \delta^{b'-b} \| \boldsymbol{\theta}^{(1)} + \boldsymbol{\theta}^{(2)} \|_{s,b} 
\| \boldsymbol{\theta}^{(1)} - \boldsymbol{\theta}^{(2)} \|_{s,b} 
\end{equation}
\end{Proposition}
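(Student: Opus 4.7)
The plan is to exploit the bilinear structure of $\mathcal{M}$ through the quadratic nonlinearity $\Lambda_{m,n}$ in \eqref{1.9}, and reduce the difference $\mathcal{M}[\boldsymbol{\theta}^{(1)}] - \mathcal{M}[\boldsymbol{\theta}^{(2)}]$ to a sum of bilinear expressions already controlled by Lemmas \ref{lem4.5} and \ref{lemextra}. Setting $\boldsymbol{\theta}^{\pm} = \boldsymbol{\theta}^{(1)} \pm \boldsymbol{\theta}^{(2)}$, the polarization identity
$$
\theta^{(1)}_{m',n'}\theta^{(1)}_{m-m',n-n'} - \theta^{(2)}_{m',n'}\theta^{(2)}_{m-m',n-n'}
= \tfrac{1}{2}\bigl( \theta^{+}_{m',n'}\theta^{-}_{m-m',n-n'} + \theta^{-}_{m',n'}\theta^{+}_{m-m',n-n'} \bigr)
$$
displays $\Lambda^{(1)}_{m,n} - \Lambda^{(2)}_{m,n}$ as a symmetric bilinear pairing of $\boldsymbol{\theta}^{+}$ with $\boldsymbol{\theta}^{-}$, of exactly the form assumed by Lemmas \ref{lem4.5} and \ref{lemextra} with the substitutions $\boldsymbol{\theta} \mapsto \boldsymbol{\theta}^{\pm}$, $\boldsymbol{\beta} \mapsto \boldsymbol{\theta}^{\mp}$.

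Next I would split $\mathcal{M}[\boldsymbol{\theta}^{(1)}] - \mathcal{M}[\boldsymbol{\theta}^{(2)}]$ according to the second and third lines of \eqref{1.8}. The first contribution has the form $\phi(t)\, b_{m,n}(t)$ of Lemma \ref{lem4.5}, with the two factors in the convolution corresponding (after absorbing $e^{\beta t/2}\phi(t)$ into the hatted quantities) to $\boldsymbol{\theta}^{+}$ and $\boldsymbol{\theta}^{-}$; the second contribution has the form $\phi(t)\, d_{m,n}(t)$ of Lemma \ref{lemextra}, with the same bilinear pairing. Applying Lemma \ref{lem4.5} to the first piece yields
$$
\sum_{(m,n) \in \mathbb{Z}^2_0} \int_{\mathbb{R}} W_{m,n}(\tau;b)\,\bigl|\mathcal{F}[\phi\, b_{m,n}](\tau)\bigr|^2 d\tau
\;\lesssim\; \delta^{2(b'-b)}\,\|\boldsymbol{\theta}^{+}\|_{s,b}^2\,\|\boldsymbol{\theta}^{-}\|_{s,b}^2,
$$
and Lemma \ref{lemextra} produces an identical bound for the $d_{m,n}$ piece. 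Summing the two contributions, using the triangle inequality in $T_{s,b}$, and extracting the square root then yields the claimed estimate with $c_1 \delta^{b'-b}\|\boldsymbol{\theta}^{+}\|_{s,b}\|\boldsymbol{\theta}^{-}\|_{s,b}$, with $c_1$ independent of $\delta$ since the dependence on $\delta$ in Lemmas \ref{lem4.5} and \ref{lemextra} enters only through the explicit $\delta^{2b'-2b}$ factor.

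The main potential obstacle is really bookkeeping rather than fresh analysis: one must verify that the $e^{\beta t}$ factor present in \eqref{1.8} splits cleanly as $e^{\beta t/2}\cdot e^{\beta t/2}$ and is absorbed symmetrically into the $\hat\theta^{+}$ and $\hat\theta^{-}$ built from $\psi(t) = e^{\beta t/2}\phi(t)$ (which remains a smooth cutoff supported in $(-2\delta, 2\delta)$, so that Lemma \ref{lem4.1} still applies), and that the $-im$ prefactor in $\Lambda_{m,n}$ is what produces the $m^2 W_{m,n}(\tau;b')$ weight appearing in \eqref{eqlem4.5}. Once these identifications are made, the result follows directly from the bilinear bounds in Lemmas \ref{lem4.5} and \ref{lemextra}; no further use of Proposition \ref{prop0} or of the weighted sum-integral machinery is needed beyond what those lemmas already invoke, and the strict inequality $b' > b$ is precisely what converts the small-time factor into a genuine contraction as $\delta \to 0$.
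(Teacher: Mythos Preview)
Your proposal is correct and follows essentially the same approach as the paper: both factor the quadratic difference via $\boldsymbol{\theta}^{(1)}+\boldsymbol{\theta}^{(2)}$ and $\boldsymbol{\theta}^{(1)}-\boldsymbol{\theta}^{(2)}$, split $\mathcal{M}[\boldsymbol{\theta}^{(1)}]-\mathcal{M}[\boldsymbol{\theta}^{(2)}]$ into the two integral pieces of \eqref{1.8}, and apply Lemma~\ref{lem4.5} and Lemma~\ref{lemextra} respectively. The only cosmetic difference is that you write the symmetric polarization $\tfrac12(\theta^{+}\theta^{-}+\theta^{-}\theta^{+})$ whereas the paper writes the single product $(\theta^{(1)}+\theta^{(2)})(\theta^{(1)}-\theta^{(2)})$ directly; after summing over $(m',n')\in\mathbb{Z}^2_{0,n}$ these coincide by the $(m',n')\leftrightarrow(m-m',n-n')$ symmetry.
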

\begin{proof}
We note from definition in \eqref{1.8}-\eqref{1.10} 
\begin{multline}
\left [ \mathcal{M} [\boldsymbol{\theta}^{(1)} ] 
- \mathcal{M} [\boldsymbol{\theta}^{(2)}] \right ]_{m,n} (t) \\
= 
\phi (t) \int_{\mathbb{R}} 
H(t-s) e^{-(\alpha n^2 -i l_{m,n} ) (t-s) } e^{\beta s} \phi^2 (s) \Lambda_{m,n} (s) ds  
\\
-\phi (t) e^{i l_{m,n} t - \alpha n^2 |t|} \int_{\mathbb{R}} 
H(-s) e^{(\alpha n^2 -i l_{m,n} ) s } e^{\beta s} \phi^2 (s) \Lambda_{m,n} (s) ds \ ,  
\end{multline}
where 
\begin{equation}
\Lambda_{m,n} (t) =-i m \sum_{(m',n') \in \mathbb{Z}^2_{0,n}} \left ( \theta^{(1)}_{m',n'} (t) 
+ \theta^{(2)}_{m',n'} (t) \right ) \left ( \theta^{(1)}_{m-m',n-n'} (t)- 
\theta^{(2)}_{m-m',n-n'} (t) \right )
\end{equation}
the result follows from 
applying Lemmas \ref{lem4.5} and \ref{lemextra}, with $\boldsymbol{\theta} = \boldsymbol{\theta}^{(1)} + 
\boldsymbol{\theta}^{(2)}$, $\boldsymbol{\beta} = \boldsymbol{\theta}^{(1)} - 
\boldsymbol{\theta}^{(2)} $.
\end{proof}

\noindent{\bf Proof of Theorem \ref{Thm1}:}

The existence and uniqueness of solution in some ball containing the initial condition
by applying Proposition \ref{propM}
with $b ' > b$ (with all other given restriction) and Lemma \ref{lemIC},
which for $\boldsymbol{\theta}^{(1)} =\boldsymbol{\theta}$ and
$\boldsymbol{\theta}^{(2)} = \boldsymbol{0}$ (note $\mathcal{M} [\boldsymbol{0}]=0$) implies 
$\| \mathcal{M} [ \boldsymbol{\theta} ]  \| \le c_1 \delta^{b'-b} \|\boldsymbol{\theta} \|_{s,b}^2 $. Thus
in a ball of size
$2  c_0 \| u_0 \|_{H^s_{0,b}} $ in $T_{s,b}$  
for sufficiently small $\delta$ results in
$$
\| \mathcal{N} [\boldsymbol{\theta}] \|_{s,b} \le c_0 \|u_0 \|_{H^{s}_{0,b}} 
+ 4 c_0^2 c_1 \delta^{b'-b} \| u_0 \|^2_{H^{s}_{0,b}}
\le 2 c_0 \|u_0 \|_{H^{s}_{0,b}} $$      
$$
\| \mathcal{N} [\boldsymbol{\theta}^{(1)}] 
-\mathcal{N} [\boldsymbol{\theta}^{(2)}] \|_{s,b}  
\le 
4 c_0 c_1 \delta^{b'-b} \| u_0 \|_{H^{s}_{0,b}}
\| \boldsymbol{\theta}^{(1)} - \boldsymbol{\theta}^{(2)} \|_{s,b} \le \epsilon_1
\| \boldsymbol{\theta}^{(1)} - \boldsymbol{\theta}^{(2)} \|_{s,b} 
$$
where $\epsilon_1 < 1$,
and therefore $\mathcal{N}$ is contractive in the ball.
Further, for any time $t \in [0, \delta]$, applying Lemma \ref{lem4.3} it follows that
for $s > \frac{5}{2}$,
$$ \| u (\cdot, \cdot, t) \|_{H_{0,b}^s}^2 
\lesssim \sum_{(m,n) \in \mathbb{Z}^2_0} W_{m,n} (\tau) 
\Big | \mathcal{F} [ \theta_{m,n} ] (\tau)  \Big |^2 d\tau = \| \boldsymbol{\theta} \|_{s,b}^2 $$ 
Furthermore the estimate on $\| \mathcal{M} [\boldsymbol{\theta}] \|_{s,b}$ 
implies from the integral equation itself that
$$ \| u (\cdot, \cdot, t) - u^{(0)} (\cdot, \cdot, t) \|_{H_{0,b}^s} \lesssim
\| \boldsymbol{\theta} - \boldsymbol{\theta}^{(0)} \|_{s,b} = \| \mathcal{M} [ \boldsymbol{\theta} ] \|_{s,b} 
\lesssim \delta^{b'-b} \| \boldsymbol{\theta} \|^2_{s,b} $$
which by shrinking $\delta$ implies time continuity of solution at $t =0$ in $H_{0,b}^s$ since
$u^{(0)} (\cdot, \cdot, t)$ is obviously continuous. 
To prove continuity at a point $t_0 \in [0, \delta]$,
we rewrite (\ref{1.6}) in the form 
$$ u_{m, n} = u_{m,n} (t_0) + 
\int_{t_0}^t e^{-(n^2 -i l_{m,n} ) (t-s)} e^{\beta s} A_{m,n} (s) ds \ ,$$
which is clearly possible, 
and find an equivalent integral equation similar to (\ref{1.8}), except centering it at $t_0$. 
It is clear that the same argument shows continuity at $t_0$. 
that is left is the global energy control and 
uniqueness argument for solution to (\ref{1.6}). This is accomplished
through a more traditional energy type argument in the following subsection.

\subsection{Energy control, Uniqueness and end of Theorem \ref{Thm1} proof}

While the argument in the ensuing can be carried out in the space of sequences $\left \{ u_{m,n} \right \}$
by working directly with (\ref{1.6}), and doing inner product through in $(m,n) \in \mathbb{Z}^2_0$, it is
easier to see that this argument directly follows\footnote{While the manipulation to get energy inequality is
formal in the sense
that the derivatives in $x$ and $y$ are not assured, the end product is legitimate as it involves norms
that exist.}
from (\ref{1.1}-\ref{1.3}). 
It is convenient to introduce
\begin{equation}
\label{u.1}
v (x,y,t) = \int_0^y u(x,y',t) dy' 
\end{equation}
It is to be noted that $v \in \mathbb{T} [0, 2\pi]^2$ since $\int_0^{2\pi} u(x,y,t) dy=0$.
Then on integrating (\ref{1.1}) from $0$ to $y$, noting $u_y=0$ and $u_{yyy}=0$ at $y=0$, we obtain
\begin{equation}
\label{u.2}
v_{yyt} -\gamma v_{xxx}-\alpha v_{yyyy}-\beta v_{yy} + \left ( v_y^2 \right )_{xy} = 0 
\end{equation}
Using inner product of (\ref{u.2}) with $v$ and appropriate integration by parts, 
we obtain the energy bound
\begin{equation}
\label{u.3}
\frac{d}{dt} \frac{1}{2} \| u \|_{L^2}^2 + \alpha \| u_y \|_{L^2}^2 - \beta \| u \|_{L^2}^2 = 0 
\end{equation} 
It follows
\begin{equation}
\label{u.3.0}
\frac{1}{2} \| u (\cdot, \cdot, t) \|_{L^2}^2 
+ \alpha \int_0^t  \| u_y (\cdot, \cdot, s) \|_{L^2}^2 
ds
= 
\frac{1}{2} \| u_0 \|_{L^2}^2 +   
\beta \int_0^t \| u (\cdot, \cdot, s) \|_{L^2}^2 
d\tau
\end{equation}
Using Poincare inequality and Gronwall's Lemma, it follows that
\begin{equation}
\label{u.3.1}
\frac{1}{2} \| u (\cdot, \cdot, t) \|_{L^2}^2 
\le \frac{1}{2} \| u_0 \|_{L^2}^2 e^{2 (\beta-\alpha) t} 
\end{equation}
and we have global exponential control of $L^2$ norm.
If ${\tilde u}$ is another solution, and we define corresponding ${\tilde v} = \int_0^y {\tilde u} dy$,
Then, ${\tilde v}$ satisfies (\ref{u.2}) as well. 
Subtracting, we obtain the following equation for $w=v-{\tilde v}$:
\begin{equation}
\label{u.4}
w_{yyt} -\gamma w_{xxx} -\alpha w_{yyyy} -\beta w_{yy} + \left [ \left (v_y + {\tilde v}_y\right ) w_y \right ]_{xy} = 0
\end{equation} 
Inner product with $w$ in (\ref{u.4}), integration by parts and then time integration leads to
\begin{equation}
\label{u.5}
\frac{1}{2} \| w_y (\cdot, \cdot, t) \|_{L^2}^2 
+ \int_{0}^t \alpha \| w_{yy} (\cdot, \cdot, s)\|_{L^2}^2 d\tau 
=\beta \int_0^t \| w_y (\cdot, \cdot, s) \|_{L^2}^2 
- \frac{1}{2} \int_0^t \left ( w_{y}^2, [ v_y + {\tilde v}_y ]_{x}  \right ) (s) ds   
\end{equation}
Recalling $w_y=u-{\tilde u}$ and $v_y =u$, ${\tilde v}_y = {\tilde u}$,
using Gronwall's Lemma in the integral form, it follows $u = {\tilde u}$
and the solution to (\ref{1.6}) is unique
in $L^2 \left ( \mathbb{T} [0, 2 \pi ]^2 \right )$ within the class of functions for which
\begin{equation}
\label{u.5.1}
\int_0^t \| u_x (\cdot, \cdot, s) \|_{\infty} ds < \infty  
\end{equation}
From Sobolev embedding theorem, it is enough to require
$\| u (\cdot, \cdot, t) \|_{H^{s}_0}$ 
for $s > 2$
is time integrable. 
This is certainly true for the $u$ shown to exist for
$t \in [0, \delta]$ because the 
corresponding  
$\boldsymbol{\theta} \in T_{s,b}$ to (\ref{1.8}) for $s > 5/2$.
This completes the proof of Theorem \ref{Thm1}.

\begin{Remark}
{\rm 
We have made no attempts to optimize in $s$ or $b$; in all likelihood, the
solution exists in weaker spaces.}
\end{Remark}

\section{Estimates and proof of Proposition \ref{prop0}}\label{prelm}

\begin{Definition}
For given $(m,n) \in \mathbb{Z}^2_0$, define
\begin{equation}
\mathbb{Z}^{2}_{0,n, >} = \left \{ (m',n') \in \mathbb{Z}^2_{0,n}: n'/n \ge \frac{1}{2} \right \}
\end{equation}
Also, define
\begin{equation}
Q_{m,n,m',n'} = |n|^{-4b} \left ( n^2 + \Big | l_{m,n} - l_{m',n'} - l_{m-m',n-n'} \Big | \right )^{2b'-2}
\end{equation}
\end{Definition}
\begin{Lemma}
\label{lem0}
\begin{multline}
\label{eqlem0}
\sum_{(m',n') \in 
\mathbb{Z}^2_{0,n}} 
\int_{\mathbb{R}} \frac{m^2 W_{m,n} (\tau; b')d\tau_1}{\left ( n^2 + |\tau-l_{m,n}| \right )^2 
W_{m',n'} (\tau_1; b) W_{m-m',n-n'} (\tau-\tau_1; b)} 
\\
\lesssim 
\sum_{(m',n') \in 
\mathbb{Z}^2_{0,n, >}} 
\int_{\mathbb{R}} \frac{m^2 W_{m,n} (\tau; b')d\tau_1}{\left ( n^2 + |\tau-l_{m,n}| \right )^2 
W_{m',n'} (\tau_1; b) W_{m-m',n-n'} (\tau-\tau_1; b)} 
\end{multline}
\end{Lemma}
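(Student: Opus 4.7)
The plan is a clean symmetrization argument based on the involution
$$\sigma:(m',n',\tau_1)\longmapsto (m-m',\,n-n',\,\tau-\tau_1),$$
which interchanges the two ``children'' in the bilinear convolution. The idea is that the integrand on the left-hand side of \eqref{eqlem0} is $\sigma$-invariant, so the contribution from the region $n'/n < 1/2$ can be folded onto the region $n'/n > 1/2$, which lies inside $\mathbb{Z}^2_{0,n,>}$.

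First I would check that $\sigma$ preserves the summation set and the measure. Since $(m',n')\in \mathbb{Z}^2_{0,n}$ means $n'\neq 0$ and $n'\neq n$, the image satisfies $n-n'\neq n$ and $n-n'\neq 0$, so $(m-m',n-n')\in\mathbb{Z}^2_{0,n}$ as well; the substitution $\tau_1\mapsto\tau-\tau_1$ leaves $d\tau_1$ invariant.

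Next I would verify term-by-term that each factor of the integrand is $\sigma$-invariant. The outer factor $m^2\,W_{m,n}(\tau;b')/(n^2+|\tau-l_{m,n}|)^2$ depends only on $(m,n,\tau)$ and is untouched by $\sigma$. The remaining denominator $W_{m',n'}(\tau_1;b)\,W_{m-m',n-n'}(\tau-\tau_1;b)$ is manifestly symmetric under the swap, since $\sigma$ simply interchanges the two $W$'s. Hence the whole integrand is invariant.

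Finally I would split $\mathbb{Z}^2_{0,n}=A\cup B\cup C$, where $A=\{n'/n>1/2\}$, $B=\{n'/n=1/2\}$ (nonempty only when $n$ is even, corresponding to the single value $n'=n/2$), and $C=\{n'/n<1/2\}$. Observing that $n'/n<1/2$ if and only if $(n-n')/n>1/2$, the involution $\sigma$ sends $C$ bijectively onto $A$ and fixes $B$ setwise. By $\sigma$-invariance of the integrand, the sum-integral over $C$ equals that over $A$, so the total sum-integral over $\mathbb{Z}^2_{0,n}$ is at most twice the sum-integral over $A\cup B=\mathbb{Z}^2_{0,n,>}$, which is exactly \eqref{eqlem0}. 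There is no substantive obstacle here; the only point requiring a moment's care is placing the equality case $n'/n=1/2$ inside $\mathbb{Z}^2_{0,n,>}$, which is already ensured by the definition (the set uses $\geq$, not $>$).
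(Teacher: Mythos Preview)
Your proof is correct and follows essentially the same symmetrization argument as the paper: both exploit invariance of the summand under the involution $(m',n',\tau_1)\mapsto(m-m',n-n',\tau-\tau_1)$ to fold the region $n'/n<\tfrac12$ onto $n'/n>\tfrac12$, yielding the factor-of-two bound. Your version is slightly more explicit about the partition $A\cup B\cup C$ and the handling of the boundary case $n'/n=\tfrac12$, but the idea is identical.
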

\begin{proof}
We simply note that the summand above is invariant
on change of variables $(m',n', \tau_1) \rightarrow (m-m',n-n',\tau-\tau_1)$; hence
\begin{multline}
\sum_{(m',n') \in 
\mathbb{Z}^2_{0,n}, | n'| \le |n-n'|} 
\int_{\mathbb{R}} \frac{m^2 W_{m,n} (\tau; b')d\tau_1}{\left ( n^2 + |\tau-l_{m,n}| \right )^2 
W_{m',n'} (\tau_1; b) W_{m-m',n-n'} (\tau-\tau_1; b)} 
\\
= \sum_{(m',n') \in 
\mathbb{Z}^2_{0,n}, |n'| \ge |n-n'|} 
\int_{\mathbb{R}} \frac{m^2 W_{m,n} (\tau; b')d\tau_1}{\left ( n^2 + |\tau-l_{m,n}| \right )^2 
W_{m',n'} (\tau_1; b) W_{m-m',n-n'} (\tau-\tau_1; b)}  \ ,
\end{multline}
Hence the Lemma follows since the set $|n'| \ge |n'-n|$ is equivalent to $n'/n \ge \frac{1}{2}$.
\end{proof}
\begin{Definition}
\label{defB0}
For any $(m,n) \in \mathbb{Z}_0^2$, define the corresponding set
$$ \mathcal{B}_0 = \left \{ (m',n') \in \mathbb{Z}^2_{0,n,>}, n'=2n \right \} $$
\end{Definition}
\begin{Remark}
We first consider the summation over this special set $\mathcal{B}_0 \subset \mathbb{Z}^2_{0,n,>}$, since
this analysis differs substantially from the rest of the summation set $\mathbb{Z}^{2}_{0,n,>}$.
\end{Remark}
\begin{Lemma}
\label{lemrhosup}
For any $s \ge 0$,
\begin{equation}
\label{eqlemrhosup}
\sup_{(m',n') \in \mathcal{B}_0} 
\frac{\rho_{m,n}}{\rho_{m',n'} \rho_{m-m',n-n'}}  \lesssim 1 
\end{equation} 
\end{Lemma}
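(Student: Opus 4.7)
The plan is to exploit the very restrictive definition of $\mathcal{B}_0$: on this set we have $n' = 2n$, which forces $n - n' = -n$, so $|n'| = 2|n|$ and $|n-n'| = |n|$. Thus the three weights reduce to
\begin{equation*}
\rho_{m,n} = (|n|+|m|)^{2s}, \quad \rho_{m',n'} = (2|n|+|m'|)^{2s}, \quad \rho_{m-m',n-n'} = (|n|+|m-m'|)^{2s}.
\end{equation*}

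First I would apply the triangle inequality $|m| \le |m'| + |m-m'|$ to obtain
\begin{equation*}
|n| + |m| \;\le\; |n| + |m'| + |m-m'| \;\le\; (2|n|+|m'|) + (|n|+|m-m'|).
\end{equation*}
Then I would invoke the elementary fact that for any reals $A, B \ge 1$ one has $A + B \le 2AB$ (since $AB - A \ge 0$ and $AB - B \ge 0$). Setting $A = 2|n|+|m'|$ and $B = |n|+|m-m'|$, both factors are at least $1$ because $n \in \mathbb{Z} \setminus \{0\}$ (indeed $A \ge 2$ and $B \ge 1$), so
\begin{equation*}
|n|+|m| \;\le\; 2(2|n|+|m'|)(|n|+|m-m'|).
\end{equation*}

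Raising both sides to the $2s$-th power (permissible since $s \ge 0$) yields
\begin{equation*}
(|n|+|m|)^{2s} \;\le\; 4^{s} (2|n|+|m'|)^{2s} (|n|+|m-m'|)^{2s},
\end{equation*}
which is precisely $\rho_{m,n} \le 4^{s}\, \rho_{m',n'}\, \rho_{m-m',n-n'}$. The claimed bound \eqref{eqlemrhosup} follows with implicit constant $4^s$, uniformly over $(m',n') \in \mathcal{B}_0$.

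I do not anticipate any genuine obstacle here: the lemma is essentially a book-keeping statement whose whole content is that the special locus $n' = 2n$ keeps $|n-n'|$ comparable to $|n|$, so the triangle inequality on the $m$-variables suffices. The one point worth flagging is that it is crucial that $n \ne 0$ (built into $\mathbb{Z}^2_0$), as this ensures the $A,B \ge 1$ hypothesis needed for the elementary inequality $A+B \le 2AB$; without it the bound would degenerate.
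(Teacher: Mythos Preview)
Your proof is correct. The paper argues slightly differently: instead of the inequality $A+B\le 2AB$ for $A,B\ge 1$, it performs a case split on whether $|m'|\le |m|/2$ or $|m'|>|m|/2$, and in each case notes that one of the two denominator factors already dominates $(|m|+|n|)^{2s}$ (up to a constant) while the other is at least $1$ because $|n|\ge 1$. Your route is a bit slicker in that it bypasses the case analysis; both rest on the same essential observation, namely that the constraint $n'=2n$ forces $|n-n'|=|n|\ge 1$ so that each denominator weight is bounded below by $1$. Neither argument delivers anything the other does not.
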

\begin{proof}
We note from expression of $\rho_{m,n}$ that  
\begin{equation}
\frac{\rho_{m,n}}{\rho_{m',2n} \rho_{m-m',n}}  
= \frac{(|m|+|n|)^{2s}}{\left ( |m'| + 4 |n| \right )^{2s} \left ( |m-m'| + |n| \right )^{2s}}
\lesssim 1 \ ,
\end{equation}
since 
we can break up the set $\mathcal{B}_0$ into $|m'| \le |m|/2$ and its complement $|m'| > |m|/2$,    
and the expression above is bounded in either case.
\end{proof}
\begin{Lemma}
\label{lemB0}
For $|m| \ge 1$, under conditions of Proposition \ref{prop0},
\begin{equation}
\label{eqlemB0}
\sum_{(m',n') \in \mathcal{B}_0} 
\int_{\mathbb{R}} \frac{m^2 W_{m,n} (\tau; b')d\tau_1}{\left ( n^2 + |\tau-l_{m,n}| \right )^2 
W_{m',n'} (\tau_1;b) W_{m-m',n-n'} (\tau-\tau_1;b)} 
\lesssim 1
\end{equation}
\end{Lemma}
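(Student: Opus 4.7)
My plan is to reduce to an $\omega$-only estimate, identify the explicit form of the resonance function on $\mathcal{B}_0$, integrate out $\tau_1$ by a Bourgain-type convolution bound, apply a dispersive identity to extract decay in the resonance, and then sum over $m'$.

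First, applying Lemma \ref{lemrhosup} I reduce to a sum with the $\rho$-factors dropped, since $\rho_{m,n}/(\rho_{m',2n}\rho_{m-m',-n}) \lesssim 1$ uniformly on $\mathcal{B}_0$. A direct computation using $l_{m,n} = \gamma m^3/n^2$ then shows that on $\mathcal{B}_0$ the resonance function takes the explicit form
\begin{equation*}
R := l_{m,n} - l_{m',2n} - l_{m-m',-n} = \frac{3\gamma m'}{n^2}\Bigl(m - \frac{m'}{2}\Bigr)^2,
\end{equation*}
which vanishes only at $m' = 0$ and $m' = 2m$ and grows quadratically in $|m - m'/2|$ (and linearly in $|m'|$) elsewhere.

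Next, I would integrate out $\tau_1$ via a standard Bourgain-type convolution estimate (a variant of the appendix bounds used earlier): for $b > 1/2$,
\begin{equation*}
\int_{\mathbb{R}}\frac{d\tau_1}{\omega_{m',2n}(\tau_1;b)\,\omega_{m-m',-n}(\tau-\tau_1;b)} \lesssim \frac{n^{2-4b}}{\bigl(n^2 + |\tau - l_{m',2n} - l_{m-m',-n}|\bigr)^{2b}}.
\end{equation*}
Writing $\tau - l_{m',2n} - l_{m-m',-n} = (\tau - l_{m,n}) + R$, the elementary inequality
\begin{equation*}
\bigl(n^2 + |\tau - l_{m,n}|\bigr) + \bigl(n^2 + |\tau - l_{m,n} + R|\bigr) \ge n^2 + |R|
\end{equation*}
ensures that at least one of these two factors dominates $n^2 + |R|$. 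Using $b' \le 2/3$ (so $2b' - 2 < 0$) together with $b' \le 2b - 1/2$, I would distribute the negative exponents between the numerator factor $(n^2 + |\tau - l_{m,n}|)^{2b'-2}$ and the new denominator factor $(n^2 + |\tau - l_{m,n} + R|)^{-2b}$ to extract a factor $(n^2 + |R|)^{-\kappa}$ for some $\kappa > 0$ depending only on $b, b'$.

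Finally, I would sum over $m' \in \mathbb{Z}$: the quadratic growth $|R| \sim |m'|(m - m'/2)^2/n^2$ gives enough decay for the generic sum to converge after a dyadic split in $|m'|$ versus $|m|$. The only indices where this dispersive gain is absent are the two resonant values $m' = 0$ and $m' = 2m$, which contribute finitely many terms handled by direct evaluation of the integrand, using the explicit negative power of $n$ produced by the $\tau_1$-integration together with the $\rho$-ratio from Lemma \ref{lemrhosup}. The hard part will be precisely this case analysis at and near the resonant indices, where the dispersive identity yields no gain and one must balance the explicit exponents in $n$ and $|m|$ very carefully; this is where all three conditions $b \in (1/2, 2/3)$, $b' \le 2b - 1/2$, and $s > 5/2$ are jointly invoked.
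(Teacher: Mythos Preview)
Your plan has a genuine gap at the resonant index $m'=0$. There $R=0$, and your proposed ``direct evaluation'' gives, after the $\tau_1$-integration and the sup in $\tau$,
\[
\frac{m^{2}\,\rho_{m,n}}{\rho_{0,2n}\,\rho_{m,-n}}\cdot\frac{1}{n^{4b-2}}\cdot\sup_{\tau}\bigl(n^{2}+|\tau-l_{m,n}|\bigr)^{2b'-2-2b}
=\frac{m^{2}}{(2|n|)^{2s}\,n^{8b+2-4b'}},
\]
since $\rho_{0,2n}=(2|n|)^{2s}$ while $\rho_{m,-n}=\rho_{m,n}$. This blows up as $|m|\to\infty$ with $n$ fixed; the $\rho$-ratio yields only a negative power of $|n|$ and cannot absorb the derivative loss $m^{2}$. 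The same failure persists for the near-resonant range: for $1\le m'\ll|m|$ one has $|R|\sim m'm^{2}/n^{2}$ and $\rho_{m,n}/(\rho_{m',2n}\rho_{m-m',-n})\sim(|m'|+|n|)^{-2s}$, and since $2-2b'<1$ the sum $\sum_{m'\ge1}(m')^{-(2-2b')}(|m'|+|n|)^{-2s}$ still leaves a positive power $m^{4b'-2}$ in front. So the step ``extract $(n^{2}+|R|)^{-\kappa}$ and then sum'' cannot close, no matter how the conditions on $b,b',s$ are used. (The other resonance $m'=2m$ is harmless, because there the $\rho$-ratio is $\sim(|m|+|n|)^{-2s}$.)

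This is exactly why the paper singles out $\mathcal{B}_0$ and does \emph{not} pass through the termwise bound $Q_{m,n,m',n'}$ of Lemma~\ref{lem1}. Instead it rescales $(\tau,\tau_1)\to(\tau n^{2},\tau_1 n^{2})$, sets $\xi=\gamma^{1/3}m/|n|^{4/3}$, $\xi_1=\gamma^{1/3}m'/|n|^{4/3}$, replaces the $m'$-sum by a $\xi_1$-integral, integrates $\tau_1$ via Lemma~\ref{lem4.2.c}, and then applies the Kenig--Ponce--Vega estimate (Lemma~\ref{lem4.6})
\[
\int_{\mathbb{R}}\frac{d\xi_1}{\bigl(1+|\tau-\xi_1^{3}-(\xi-\xi_1)^{3}|\bigr)^{2b}}
\ \lesssim\ \frac{1}{\sqrt{|\xi|}\,\bigl(1+|4\tau-\xi^{3}|\bigr)^{1/2}}.
\]
The crucial extra factor $\bigl(1+|4\tau-\xi^{3}|\bigr)^{-1/2}$ comes from the algebraic identity $\xi^{3}-\xi_1^{3}-(\xi-\xi_1)^{3}=3\xi\xi_1(\xi-\xi_1)$ and a change of variables in the $\xi_1$-integral; at the worst point $\tau=\xi^{3}$ it equals $\sim|\xi|^{-3/2}$, which exactly cancels the remaining $\xi^{2}/\sqrt{|\xi|}=|\xi|^{3/2}$. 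This cancellation is invisible if one bounds each $m'$ separately before integrating/summing, which is why your route (essentially the $Q$-bound of Lemma~\ref{lem1} restricted to $n'=2n$) cannot succeed here.
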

\begin{proof}
Using expressions for $W_{m,n} (\tau)$, changing integration variable $\tau_1 \rightarrow \tau-\tau_1$ 
and using Lemma \ref{lemrhosup}, it is enough to show
\begin{equation}
\label{eqlemB01}
\sum_{m' \in \mathbb{Z}}
\int_{\mathbb{R}} \frac{m^2 \omega_{m,n} (\tau; b'-1) d\tau_1}{\omega_{m',2n} (\tau-\tau_1; b) \omega_{m-m',-n} (\tau_1; b) } \lesssim  1
\end{equation}
Since we are only interested in determining supremum of the left hand side of (\ref{eqlemB01}),
it is convenient to change variables $(\tau, \tau_1) \rightarrow (\tau n^2, \tau_1 n^2 )$, in which
case, we get the above to simplify to 
\begin{multline}
\frac{1}{|n|^{8b-4b'+2}} \sum_{m' \in \mathbb{Z}}
\int_{\mathbb{R}} \frac{m^2 \left(1 + |\tau-\frac{\gamma m^3}{n^4}| \right)^{2b'-2} d\tau_1}{ 
\left (4 + |\tau-\tau_1-\frac{\gamma {m'}^3}{n^4}| \right )^{2b}  
\left (1 + |\tau_1-\frac{\gamma (m-m')^3}{n^4}| \right )^{2b} } \\
\lesssim
\frac{1}{|n|^{8b-4b'+2}} \sum_{m' \in \mathbb{Z}}
\int_{\mathbb{R}} \frac{m^2 \left (1 + |\tau-\frac{\gamma m^3}{n^4}| \right )^{2b-2} d\tau_1}{ 
\left (1 + |\tau-\tau_1-\frac{\gamma {m'}^3}{n^4}| \right )^{2b}  
\left (1 + |\tau_1-\frac{\gamma (m-m')^3}{n^4}| \right )^{2b} } 
\end{multline}
We may assume $\gamma > 0$ without loss of generality, 
as otherwise for $\gamma < 0$, we replace $(\gamma, \tau) \rightarrow (-\gamma, -\tau)$ in the argument
given in the ensuing. 
Introducing $\xi = \gamma^{1/3} m/|n|^{4/3} $, 
$\xi_1 = |\gamma|^{1/3} m'/|n|^{4/3} $ we obtain by using Lemma \ref{lem4.2.c} that the above expression
\begin{multline}
\lesssim \frac{1}{|n|^{8b-4b'-2}} 
\int_{\mathbb{R}} \int_{\mathbb{R}} \frac{ \xi^2 \left ( 1 + |\tau-\xi^3| \right )^{2b-2} d\tau_1 d\xi_1}{
\left ( 1 + |\tau-\tau_1-\xi_1^3| \right )^{2b}
\left ( 1 + |\tau_1-(\xi-\xi_1)^3| \right )^{2b}} \\
\lesssim \frac{1}{|n|^{8b-4b'-2}} 
\int_{\mathbb{R}} \frac{\xi^2 \left ( 1 + |\tau-\xi^3| \right )^{2b-2} d\xi_1}{
\left ( 1 + |\tau- \xi_1^3 - (\xi-\xi_1)^3 | \right )^{2b}} 
\end{multline}
Using Lemma \ref{lem4.6} in the Appendix (originally due to Koenig, Ponce \& Pega\cite{KoenigPV}), 
the above 
\begin{equation}
\lesssim \frac{\xi^2}{\sqrt{|\xi|} \left ( 1 + |4 \tau - \xi^3| \right )^{1/2} 
\left ( 1 + |\tau-\xi^3| \right )^{2-2b'} } \lesssim 1  
\end{equation}
\end{proof}
\begin{Remark}
Having got the sum over the set $\mathcal{B}_0$ out of the way, we now consider the rest. For that purpose
it is useful to reduce the integration over $\tau_1$ into a simpler expression as in the following Lemma.
\end{Remark}  
\begin{Lemma}
\label{lem1}
For $(m,n) \in \mathbb{Z}_0^2 $, under conditions of Proposition \ref{prop0},
we have
\begin{multline}
\label{4}
\sup_{\tau \in \mathbb{R}} 
\sum_{(m',n') \in \mathbb{Z}_{0,n}}  
\int_{\mathbb{R}} \frac{m^2 W_{m,n} (\tau; b')d\tau_1}{\left ( n^2 + |\tau-l_{m,n}| \right )^2 
W_{m',n'} (\tau_1; b) W_{m-m',n-n'} (\tau-\tau_1; b)} \\
\lesssim 
\sum_{(m',n') \in \mathbb{Z}^{2}_{0,n,>}}  \frac{m^2 \rho_{m,n} Q_{m,n,m',n'}}{\rho_{m-m',n-n'} \rho_{m',n'}} 
\end{multline}
\end{Lemma}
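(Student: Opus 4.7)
The plan is to reduce the claim to a per-summand pointwise bound and then to a one-dimensional calculus estimate on the $\tau_1$-integral. First, I would apply Lemma \ref{lem0} to restrict the left-hand summation to $\mathbb{Z}^2_{0,n,>}$, matching the set on the right. Next, writing $W_{j,k}(\cdot;c)=\rho_{j,k}\,\omega_{j,k}(\cdot;c)$ factors out the common coefficient $m^2\,\rho_{m,n}/(\rho_{m',n'}\rho_{m-m',n-n'})$ from both sides. Since the right-hand side is independent of $\tau$, the elementary bound $\sup_{\tau}\sum\le\sum\sup_{\tau}$ reduces matters to the term-by-term estimate
\begin{equation*}
\sup_{\tau}\int_{\mathbb{R}}\frac{(n^2+|\tau-l_{m,n}|)^{2b'-2}\,d\tau_1}{\omega_{m',n'}(\tau_1;b)\,\omega_{m-m',n-n'}(\tau-\tau_1;b)}\ \lesssim\ |n|^{-4b}\,(n^2+|L|)^{2b'-2},
\end{equation*}
for every $(m',n')\in\mathbb{Z}^2_{0,n,>}$, where $L=l_{m,n}-l_{m',n'}-l_{m-m',n-n'}$ and I have used $\omega_{m,n}(\tau;b')/(n^2+|\tau-l_{m,n}|)^{2}=(n^2+|\tau-l_{m,n}|)^{2b'-2}$.

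Second, I would translate $\tau_1\mapsto\tau_1+l_{m',n'}$ and set $\sigma:=\tau-l_{m,n}$. Since $l_{m,n}-l_{m',n'}-l_{m-m',n-n'}=L$, the inner integral becomes
\begin{equation*}
I(\sigma,L)\ :=\ \int_{\mathbb{R}}\frac{d\tau_1}{(n'^2+|\tau_1|)^{2b}\,\bigl((n-n')^2+|\sigma+L-\tau_1|\bigr)^{2b}}.
\end{equation*}
A standard one-dimensional convolution estimate valid for $b>1/2$ (of the type used in the proof of Lemma \ref{lemB0} and collected in the Appendix) yields
\begin{equation*}
I(\sigma,L)\ \lesssim\ \bigl[\min(n'^{2},(n-n')^{2})\bigr]^{1-2b}\,\bigl(n'^{2}+(n-n')^{2}+|\sigma+L|\bigr)^{-2b}.
\end{equation*}
On the set $\mathbb{Z}^2_{0,n,>}$ one has $n'^2\ge n^2/4$, so the denominator is $\gtrsim(n^2+|\sigma+L|)^{2b}$; also $\min(n'^2,(n-n')^2)\ge 1$ together with $1-2b<0$ yields $[\min]^{1-2b}\le 1$.

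Finally, I would multiply by $(n^2+|\sigma|)^{2b'-2}$ and take the supremum over $\sigma$, splitting into the regimes $|\sigma|\le 2|L|$ and $|\sigma|>2|L|$. In the first regime, since $2b'-2<0$ and $-2b<0$, comparing the endpoints $\sigma=0$ and $\sigma=-L$ shows that $(n^2+|\sigma|)^{2b'-2}(n^2+|\sigma+L|)^{-2b}$ is majorized by $(n^2+|L|)^{2b'-2}\,n^{-4b}$ when $|L|\gtrsim n^2$, and by $n^{4b'-4-4b}\lesssim n^{-4b}(n^2+|L|)^{2b'-2}$ when $|L|\lesssim n^2$. In the second regime, the triangle inequality $|\sigma+L|\ge|\sigma|/2$ converts $(n^2+|\sigma+L|)^{-2b}$ into $(n^2+|\sigma|)^{-2b}$, leaving $(n^2+|\sigma|)^{2b'-2-2b}$, which, using $n^2+|\sigma|\gtrsim n^2+|L|$ and the negativity of $2b'-2-2b$, is $\le(n^2+|L|)^{2b'-2}\,n^{-4b}$. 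The main obstacle I expect is precisely this exponent bookkeeping across the two regimes; the hypothesis $b'\le\min\{2b-1/2,\,2/3\}$ of Proposition \ref{prop0} is what is needed to balance the $|L|$-power on the right against the convolution decay, with the $n'^2\gtrsim n^2$ gain in $\mathbb{Z}^2_{0,n,>}$ supplying exactly the prefactor $|n|^{-4b}$.
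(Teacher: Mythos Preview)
Your approach is essentially the paper's: reduce to $\mathbb{Z}^2_{0,n,>}$ via Lemma~\ref{lem0}, factor out the $\rho$'s, apply the convolution estimate (Lemma~\ref{lem4.2.c}) to the $\tau_1$-integral, use $n'^2\ge n^2/4$, and then take the supremum in $\tau$. The one place where you diverge is the final $\sup_\tau$ step: the paper simply invokes Lemma~\ref{lem4.2.a}, whereas you split into $|\sigma|\le 2|L|$ and $|\sigma|>2|L|$; your Case~2 is fine, but in Case~1 ``comparing the endpoints $\sigma=0$ and $\sigma=-L$'' is not by itself a proof that the maximum of $(n^2+|\sigma|)^{2b'-2}(n^2+|\sigma+L|)^{-2b}$ occurs at one of those points---that is precisely the content (and the nontrivial calculus) of Lemma~\ref{lem4.2.a}, so you should cite it rather than leave the endpoint claim unjustified. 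Also, the specific upper bounds $b'\le\min\{2b-\tfrac12,\tfrac23\}$ are not actually used in this lemma; only $b>\tfrac12$ and $b\le b'<1$ enter.
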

\begin{proof}
We recall $W_{m,n} (\tau; b) = \omega_{m,n} (\tau; b) \rho_{m,n}$ and from definition of $\omega_{m,n}$,
$$
\frac{\omega_{m,n} (\tau; b')}{\left ( n^2 + |\tau-l_{m,n}| \right )^2 } = 
\omega_{m,n} (\tau; b'-1) =
\frac{1}{\omega_{m,n} (\tau; 1-b')}$$
First we note that if $|n'| \ge |n-n'|$, which is equivalent to $n'/n \ge 1/2$, 
Lemma \ref{lem4.2.c} with 
$k_1=(n-n')^2$, $k_3={n'}^2$, $k_4=l_{m',n'}$, $k_2=l_{m-m',n-n'}$,
implies
\begin{equation}
\label{5}
\int_{\mathbb{R}} \frac{\omega_{m,n} (\tau; b'-1) d\tau_1}{\omega_{m',n'} (\tau_1; b) 
\omega_{m-m',n-n'} (\tau-\tau_1; b)} 
\lesssim \frac{1}{|n-n'|^{4b-2}  \omega_{m,n} (\tau; 1-b') \omega_{m',n'} (\tau-l_{m-m',n-n'}; b)}  
\end{equation}
Therefore, 
\begin{multline}
\label{6}
\sum_{(m',n') \in \mathbb{Z}_{0,n,>}^2}  
\int_{\mathbb{R}} \frac{m^2 W_{m,n} (\tau; b')d\tau_1}{ \left ( n^2 + |\tau-l_{m,n} | \right )^2 
W_{m',n'} (\tau_1; b) W_{m-m',n-n} (\tau-\tau_1; b) }
\\ \lesssim 
\sum_{(m',n') \in \mathbb{Z}_{0,n, >}^2}  
\frac{m^2 \rho_{m,n}}{ \rho_{m',n'} \rho_{m-m',n-n'} 
|n-n'|^{4b-2}  \omega_{m,n} (\tau; 1-b') \omega_{m',n'} (\tau-l_{m-m',n-n'}; b)}  
\end{multline}
Further by switching variables $(n', m', \tau_1) \rightarrow (n-n', m-m', \tau-\tau_1)$, it follows that 
\begin{multline}
\label{7}
\sum_{(m',n') \in \mathbb{Z}_{0,n}^2, |n'| < |n-n'|}  
\int_{\mathbb{R}} \frac{m^2 W_{m,n} (\tau; b')d\tau_1}{ \left ( n^2 + |\tau-l_{m,n} | \right )^2 
W_{m',n'} (\tau_1; b) W_{m-m',n-n'} (\tau-\tau_1; b) } \\
\lesssim
\sum_{(m',n') \in \mathbb{Z}_{0,n, >}}  
\int_{\mathbb{R}} \frac{m^2 \rho_{m,n} \omega_{m,n} (\tau; b'-1) d\tau_1}{\rho_{m',n'} \rho_{m-m',n-n'} \omega_{m',n'} (\tau_1; b)
\omega_{m-m', n-n'} (\tau-\tau_1; b) }  \\
\lesssim \sum_{(m',n') \in \mathbb{Z}_{0,n,>}^2}  
\frac{m^2 \rho_{m,n}}{ \rho_{m',n'} \rho_{m-m',n-n'} 
|n-n'|^{4b-2} \omega_{m,n} (\tau; 1-b') \omega_{m',n'} (\tau-l_{m-m',n-n'}; b)}  
\end{multline}
We also note $n'/n \ge \frac{1}{2}$, 
implies ${n'}^2 \ge \frac{n^2}{4}$ and
so in that case applying Lemma \ref{lem4.2.a}
\begin{multline}
\label{8} 
\frac{1}{\omega_{m,n} (\tau; 1-b') \omega_{m',n'} (\tau-l_{m-m',n-n'}; b)} 
\\
\lesssim \frac{1}{\left ( |n|^2/4 + \Big | \tau-l_{m,n} \Big | \right )^{2 (1-b')}     
\left ( {n'}^2 + \Big | \tau-l_{m',n'}-l_{m-m',n-n'} \Big | \right )^{2 b} } \\    
\lesssim 
\frac{1}{\left ( |n|^2/4 + \Big | \tau-l_{m,n} \Big | \right )^{2 (1-b')}     
\left ( \frac{n^2}{4} + \Big | \tau-l_{m',n'}-l_{m-m',n-n'} \Big | \right )^{2b} }     
\lesssim Q_{m,n,m',n'}  \ , 
\end{multline}
where in the last step we used Lemma \ref{lem4.2.a}.
Therefore, using \eqref{6}-\eqref{8}, the lemma follows.
\end{proof}
\begin{Lemma}
\label{lem2}
For $s > 1$ 
\begin{equation}
\sum_{(m',n') \in \mathbb{Z}^2_{0,n, >}} 
\frac{\rho_{m,n}}{\rho_{m',n'} \rho_{m-m',n-n'}} \lesssim 1
\end{equation} 
\end{Lemma}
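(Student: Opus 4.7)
The plan is to exploit the triangle inequality for the $\ell^1$-type norm $(m,n)\mapsto |m|+|n|$,
$$|m|+|n| \le (|m'|+|n'|) + (|m-m'|+|n-n'|),$$
which forces at least one of the two summands on the right to be at least $\tfrac12(|m|+|n|)$. I would accordingly split the index set $\mathbb{Z}^2_{0,n,>}$ into two (possibly overlapping) subsets
$$R_1 = \{(m',n') : |m'|+|n'| \ge \tfrac12(|m|+|n|)\}, \qquad R_2 = \{(m',n') : |m-m'|+|n-n'| \ge \tfrac12(|m|+|n|)\},$$
so that every term in the sum is handled in at least one region.

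On $R_1$, the ratio $\rho_{m,n}/\rho_{m',n'}$ is $\lesssim 1$, and the summand reduces to $1/\rho_{m-m',n-n'} = (|m-m'|+|n-n'|)^{-2s}$. Changing variables $(m'',n'') = (m-m',n-n')$, and using that the condition $n' \ne n$ in $\mathbb{Z}^2_{0,n}$ forces $n'' \ne 0$, the resulting sum sits inside
$$\sum_{n'' \in \mathbb{Z}\setminus\{0\}}\ \sum_{m'' \in \mathbb{Z}} \frac{1}{(|m''|+|n''|)^{2s}}.$$
The symmetric argument on $R_2$ produces the analogous sum in $(m',n')$, using that $n' \ne 0$.

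It then remains to evaluate the one prototype sum. The inner sum over $m'' \in \mathbb{Z}$ satisfies
$$\sum_{m'' \in \mathbb{Z}} \frac{1}{(|m''|+|n''|)^{2s}} \lesssim \int_0^\infty \frac{dx}{(x+|n''|)^{2s}} \lesssim \frac{1}{|n''|^{2s-1}},$$
valid for $2s>1$, and then
$$\sum_{n'' \ne 0} \frac{1}{|n''|^{2s-1}} \lesssim 1$$
as soon as $2s-1 > 1$, i.e., $s > 1$, which is exactly the stated hypothesis.

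There is no real obstacle: the only things that need care are the bookkeeping of the two regions and the observation that the constraint $n'/n \ge 1/2$ defining $\mathbb{Z}^2_{0,n,>}$ only shrinks the sum, so bounding by the full sum over $\mathbb{Z}\times(\mathbb{Z}\setminus\{0\})$ is harmless. The threshold $s>1$ enters only through the final double geometric/integral comparison and is sharp for the method.
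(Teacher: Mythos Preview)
Your proof is correct and follows essentially the same approach as the paper's: split the index set according to which of $(m',n')$ or $(m-m',n-n')$ is comparable to $(m,n)$, cancel the numerator $\rho_{m,n}$ against the corresponding factor, and then bound the remaining convergent double sum. The only cosmetic differences are that the paper phrases the decomposition in the $\ell^2$ norm (invoking equivalence of norms since $\rho$ is built from the $\ell^1$ norm) and finishes with a polar integral $\int_1^\infty r^{1-2s}\,dr$, whereas you work directly with the $\ell^1$ norm and sum iteratively in $m''$ then $n''$.
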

\begin{proof}
We define
$$P_1 := \left \{ (m',n') \in \mathbb{Z}^{2}_{0,n}: (m')^2 + (n')^2 \ge \frac{1}{4} (m^2 + n^2) \right \}$$
$$P_2 := \left \{ (m',n') \in \mathbb{Z}^{2}_{0,n}: (m-m')^2 + (n-n')^2 \ge \frac{1}{4} (m^2 + n^2) \right \}$$
It is clear\footnote{Note that $P_1 \cap P_2 \ne \{0\}$, but this has no
bearing on the proof}from geometry that $\mathbb{Z}^2_{0,n} = P_1 \cup P_2$. 
By changing indices $(m',n') \rightarrow (m-m', n-n')$, it is clear that $\sum_{(m',n') \in P_1}$ and 
$\sum_{(m',n') \in P_2}$ contribute equally.   
However, because of the equivalence of norms
\begin{multline}
\sum_{(m',n') \in \mathbb{Z}^2_{0,n}} \frac{\rho_{m,n}}{\rho_{m',n'} \rho_{m-m',n-n'}} \lesssim
\sum_{(m',n') \in P_1} \frac{\rho_{m,n}}{\rho_{m',n'} \rho_{m-m',n-n'}} 
\\
\lesssim
\left ( \sum_{(m',n') \in P_1} 
\frac{1}{\left ( |n-n'|+|m-m'| \right )^{2s}}  \right ) 
\lesssim \sum_{(m',n') \in \mathbb{Z}^2_{0,n}} 
\frac{1}{\left ( |n'|^2+|m'|^2 \right )^{s}}  \lesssim   
\int_{1}^\infty r^{1-2s} dr \lesssim 1 
\end{multline}
\end{proof}
\begin{Proposition}
\label{prop1}
Under conditions of Proposition \ref{prop0}, if
$ |m| < |n|^{2-2 (b'-b)}$, 
$$ \sup_{\tau \in \mathbb{R}} \sum_{(m',n') \in \mathbb{Z}^2_{0,n}} 
\int_{\mathbb{R}} \frac{m^2 W_{m,n} (\tau;b') d\tau_1}{\left ( n^2 + |\tau-l_{m,n}| \right )^2 
W_{m',n'} (\tau_1;b) W_{m-m',n-n'} (\tau-\tau_1;b)}  
\lesssim \frac{m^2}{n^{4-4 (b'-b)}} \le 1
$$
\end{Proposition}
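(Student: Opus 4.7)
The plan is to combine the two reduction lemmas of this section, Lemma \ref{lem1} and Lemma \ref{lem2}, with a crude pointwise bound on the factor $Q_{m,n,m',n'}$ that is available precisely because $|m|$ is small. The final inequality $m^2/n^{4-4(b'-b)} \le 1$ in the statement is an immediate consequence of the hypothesis $|m| < |n|^{2-2(b'-b)}$, so the real content is only the first bound.

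First I would apply Lemma \ref{lem1} to pass from the $\tau_1$-integral and the full sum over $\mathbb{Z}^2_{0,n}$ to the simpler discrete sum
\[ \sum_{(m',n') \in \mathbb{Z}^{2}_{0,n,>}} \frac{m^2 \rho_{m,n}\, Q_{m,n,m',n'}}{\rho_{m',n'}\, \rho_{m-m',n-n'}}. \]
Next, since the hypotheses of Proposition \ref{prop0} force $b' \le 2/3 < 1$, the exponent $2b'-2$ in the definition of $Q_{m,n,m',n'}$ is strictly negative, so the factor $(n^2 + |l_{m,n}-l_{m',n'}-l_{m-m',n-n'}|)^{2b'-2}$ is maximized by dropping the second term in the base, giving
\[ Q_{m,n,m',n'} \;\le\; |n|^{-4b}(n^2)^{2b'-2} \;=\; |n|^{-(4-4(b'-b))}, \]
uniformly in $(m',n')$ and in $\tau$. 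This crude bound can then be pulled out of the sum.

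Finally, I would invoke Lemma \ref{lem2} (valid for $s>1$, and thus certainly under $s>5/2$) to bound
\[ \sum_{(m',n') \in \mathbb{Z}^{2}_{0,n,>}} \frac{\rho_{m,n}}{\rho_{m',n'}\,\rho_{m-m',n-n'}} \;\lesssim\; 1, \]
which combined with the previous step yields $\text{LHS} \lesssim m^2/|n|^{4-4(b'-b)}$, as required. The case $m=0$ is trivial from the $m^2$ factor.

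There is no serious obstacle in this regime, and that is the whole point of carving out the range $|m| < |n|^{2-2(b'-b)}$: since $|l_{m,n}| = |\gamma|\, |m|^3/n^2$ is then comparatively small, the resonance denominator in $Q_{m,n,m',n'}$ contributes nothing beyond the trivial power $|n|^{4b'-4}$, and no resonance-type cancellation is needed. The genuinely delicate analysis, which will presumably rely on the sharp Koenig--Ponce--Vega estimate Lemma \ref{lem4.6} to save a matching power of $|m|$, arises only in the complementary regime $|m| \ge |n|^{2-2(b'-b)}$ that is treated in the ensuing propositions.
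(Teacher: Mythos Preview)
Your proposal is correct and is essentially identical to the paper's own proof: apply Lemma~\ref{lem1} to reduce to the discrete sum, use the crude bound $Q_{m,n,m',n'}\le |n|^{-(4-4(b'-b))}$ obtained by dropping the resonance term (valid since $2b'-2<0$), and then invoke Lemma~\ref{lem2}. Your closing remark about why no resonance cancellation is needed in this regime is also apt.
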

\begin{proof}
This follows from applying Lemmas \ref{lem1}-\ref{lem2} and noting that 
$$ Q_{m,n,m',n'} 
\le \frac{1}{|n|^{4-4b'+4b}}$$
\end{proof}
\begin{Remark}{\rm Because of Proposition \ref{prop1}, in the remaining, 
we now only need to 
consider
$ |m| \ge n^{2-2(b'-b)}$ 
to complete the proof of Proposition \ref{prop0}. For that reason, we may
assume from this point onwards,  
$m \ne 0$. For each such $(m,n)$ we break up   
summation index set $\mathbb{Z}^{2}_{0,n, >}$     
into smaller sets, each of which requires a different argument. 
We note that the partition set for summation indices  
depends on $(m,n)$; however,
since the bound on contribution of each such set to the summation 
is found independent of $(m,n)$, the
final result does not depend on the partition. 
First consider the subset in
$\mathbb{Z}^2_{0,n,>}$ 
for which $|m'| > \frac{3}{2} |m|$ in the following Lemma.
}
\end{Remark}
\begin{Definition}\label{defB1}{\rm
For each $(m,n) \in \mathbb{Z}_{0,n}$ define corresponding set 
$$\mathcal{B}_1 := \left \{ (m',n') \in \mathbb{Z}^2_{0,n,>} : |m'| > \frac{3}{2} |m| \right \} $$
}
\end{Definition} 
\begin{Lemma}
\label{lem3}
For $ s > 2 $, $m \ne 0$, 
\begin{equation}
\sum_{(m',n') \in \mathcal{B}_1}
\frac{m^2 \rho_{m,n} Q_{m,n,m',n'}}{\rho_{m',n'} \rho_{m-m',n-n'}} \lesssim 1
\end{equation} 
\end{Lemma}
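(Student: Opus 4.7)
The plan is to exploit the constraint $|m'| > \tfrac{3}{2}|m|$ defining $\mathcal{B}_1$, which by the triangle inequality also forces $|m-m'| \ge |m'|-|m| > \tfrac{|m|}{2}$. The two large factors $|m'|$ and $|m-m'|$ then produce enough decay in the denominator $\rho_{m',n'}\rho_{m-m',n-n'}$ to absorb $m^2\rho_{m,n}$ once $s > 2$, and remarkably this can be done using only the crude pointwise bound on $Q$.

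First, by the Remark preceding Proposition \ref{prop1} it suffices to treat the regime $|m| \ge |n|^{2-2(b'-b)}$. Since $b,b' \in (\tfrac{1}{2},\tfrac{2}{3})$ forces $b'-b < \tfrac{1}{6}$, we have $2-2(b'-b) > 1$, hence $|m| \ge |n|$ and $\rho_{m,n} = (|m|+|n|)^{2s} \asymp |m|^{2s}$. Second, since $2b'-2 < 0$, the pointwise estimate
\[ Q_{m,n,m',n'} \le |n|^{-4b}(n^2)^{2b'-2} = |n|^{4b'-4-4b} \]
holds; no cancellation in $l_{m,n}-l_{m',n'}-l_{m-m',n-n'}$ will be needed on this subset.

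Next I would estimate the weight sum
\[ T := \sum_{(m',n')\in\mathcal{B}_1} \frac{1}{\rho_{m',n'}\,\rho_{m-m',n-n'}}. \]
Using $|m-m'| > |m|/2$ to bound $\rho_{m-m',n-n'} \gtrsim |m|^{2s}$ pointwise,
\[ T \lesssim \frac{1}{|m|^{2s}} \sum_{|m'|>\frac{3}{2}|m|}\,\sum_{n'\in\mathbb{Z}}\frac{1}{(|m'|+|n'|)^{2s}} \lesssim \frac{1}{|m|^{2s}}\cdot\frac{1}{|m|^{2s-2}} = \frac{1}{|m|^{4s-2}}, \]
valid for $s > 1$, by two successive integral comparisons (first over $n'$, giving $\lesssim |m'|^{1-2s}$, then over $|m'| > \tfrac{3}{2}|m|$, giving $\lesssim |m|^{2-2s}$).

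Combining these ingredients,
\[ \sum_{(m',n')\in\mathcal{B}_1} \frac{m^2\,\rho_{m,n}\, Q_{m,n,m',n'}}{\rho_{m',n'}\,\rho_{m-m',n-n'}} \lesssim m^2\cdot|m|^{2s}\cdot|n|^{4b'-4-4b}\cdot\frac{1}{|m|^{4s-2}} = \frac{|m|^{4-2s}}{|n|^{4+4b-4b'}}. \]
For $s > 2$ we have $|m|^{4-2s} \le 1$, and $4+4b-4b' = 4-4(b'-b) > 0$ gives $|n|^{-(4+4b-4b')} \le 1$, so the bound is $\lesssim 1$. The main obstacle is that all three decay contributions (the two weights and the $|n|$-power from $Q$) must line up precisely against the growth $m^2\rho_{m,n}$; the argument closes because the constraint $|m'| > \tfrac{3}{2}|m|$ defining $\mathcal{B}_1$ forces \emph{both} denominator weights to carry a full factor $|m|^{2s}$ of decay, something that will fail on the complementary subsets of $\mathbb{Z}^2_{0,n,>}$ and there require exploitation of the dispersion relation through $Q$.
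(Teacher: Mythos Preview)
Your proof is correct and follows essentially the same strategy as the paper: the crude pointwise bound $Q_{m,n,m',n'} \le |n|^{-4+4(b'-b)}$ together with the constraint $|m'| > \tfrac{3}{2}|m|$ (which forces both $|m'|$ and $|m-m'|$ to be $\gtrsim |m|$) gives enough decay to close the sum for $s>2$. The one difference worth noting is how $\rho_{m,n}$ is absorbed: the paper uses the $\mathbb{Z}^2_{0,n,>}$ constraint $|n'|\ge |n|/2$ to get $\rho_{m',n'}\gtrsim(|n|/2+|m'|)^{2s}\gtrsim\rho_{m,n}$ directly, and hence proves the lemma as stated for all $m\ne 0$; your version instead invokes the extra hypothesis $|m|\ge|n|^{2-2(b'-b)}$ to reduce $\rho_{m,n}\asymp|m|^{2s}$, which suffices for the application to Proposition~\ref{prop0} but yields a slightly weaker standalone statement than the lemma claims. (Minor point: the Remark you cite \emph{follows} rather than precedes Proposition~\ref{prop1}.)
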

\begin{proof} 
We note that since $Q_{m,n,m',n'} \le \frac{1}{|n|^{4-4 (b'-b)}}$,
\begin{multline}
\sum_{(m',n') \in \mathcal{B}_1 }
\frac{m^2 \rho_{m,n} Q_{m,n,m',n'}}{\rho_{m',n'} \rho_{m-m',n-n'}}\\
\lesssim 
\frac{1}{|n|^{4-4b'+4b}} \sum_{|m'|> \frac{3}{2} |m|, n'/n > \frac{1}{2}}  
\frac{m^2 (|n|+|m|)^{2s}}{(|n|/2+|m'|)^{2s} 
(|n-n'|+|m-m'|)^{2s}} \\
\lesssim
\frac{m^2}{|n|^{4-4b'+4b}} \sum_{|m'| > \frac{3}{2} |m|} \int_{-\infty}^\infty \frac{dr}{(|n-r|+|m'|/3)^{2s}}\\
\lesssim \frac{m^2}{|n|^{4-4b'+4b}} 
\sum_{|m'| > \frac{3}{2} |m|} \frac{1}{(|n|+|m'|/3)^{2s-1}} \lesssim \frac{|n|^{4b'-4b}}{n^4 |m|^{2s-4}} 
\lesssim 1
\end{multline}
\end{proof}
\begin{Definition}
\label{defxyT}
{\rm 
We define $\eta =m'/m $, $\zeta=n'/n$ and 
\begin{equation}
f(\eta,\zeta) = \frac{(\eta-\zeta)^2}{\zeta^2(1-\zeta)^2} (2 \zeta-1) \left ( \eta - g(\zeta) \right ) \ , 
g(\zeta) = \frac{\zeta(2-\zeta)}{2\zeta-1}   
\end{equation}
For given $(m,n) \in \mathbb{Z}_0 \setminus \{ m=0 \}$, define
\begin{equation}
\mathcal{T} = \left \{ (m',n') \in \mathbb{Z}^2_{0,n, >} : 
|\eta| \le \frac{3}{2} ~, ~\Big | (2\zeta-1) \eta-(2\zeta-1) g(\zeta) \Big | \le k (n) ~\text{or},~   
|\eta-\zeta| \le k (n) \right \} \ ,
\end{equation}
where
\begin{equation}
k(n) = \min \left \{ \frac{1}{10}, |n|^{-2/3+2b'-2b} \right \}
\end{equation}
$\mathcal{T}^c $ will denote the 
complementary set in 
$\mathbb{Z}^2_{0,n} \cup \left \{ (m',n'): |m'| \le \frac{3}{2} |m|
\right \}$, {\it i.e.} 
\begin{equation}
\mathcal{T}^c = \left \{ (m',n') \in \mathbb{Z}^2_{0,n, >} : 
|\eta| \le \frac{3}{2} ~, ~\Big | (2\zeta-1) \eta-(2\zeta-1) g(\zeta) \Big | > k (n) ~\text{and},~   
|\eta-\zeta| > k (n) \right \} \ ,
\end{equation}
}
\end{Definition}
\begin{Lemma}
\label{lemfmin}
For $ (m',n') \in \mathcal{T}^c$, 
there exists nonzero constant $C_0$ so that 
$$ \Big | l_{m,n} - l_{m-m',n-n'} - l_{m,n} \Big | \ge \frac{C_0 k^3 |m|^3}{n^2} $$  
\end{Lemma}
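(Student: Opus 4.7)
The plan is to reduce the claim to a uniform lower bound on the rational function $f(\eta,\zeta)$ from Definition \ref{defxyT}, where $\eta := m'/m$ and $\zeta := n'/n$. Substituting $l_{j,\ell} = \gamma j^3/\ell^2$ directly into the phase combination yields
\begin{equation*}
l_{m,n} - l_{m',n'} - l_{m-m',n-n'} = \frac{\gamma m^3}{n^2}\, F(\eta,\zeta), \qquad F(\eta,\zeta) := 1 - \frac{\eta^3}{\zeta^2} - \frac{(1-\eta)^3}{(1-\zeta)^2},
\end{equation*}
so it suffices to prove $|F(\eta,\zeta)| \gtrsim k^3$ on $\mathcal{T}^c$.

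First I would establish the identity $F = f$ by polynomial factorization. The quantity $P(\eta,\zeta) := \zeta^2(1-\zeta)^2 F(\eta,\zeta)$ is a cubic polynomial in $\eta$, and a quick check shows that $\eta = \zeta$ is a double root of $P$ (both $P$ and $\partial_\eta P$ vanish there), so $P = (\eta-\zeta)^2 [A(\zeta)\eta + B(\zeta)]$. Matching the $\eta^3$ and constant coefficients of $P$ pins down $A(\zeta) = 2\zeta-1$ and $B(\zeta) = -\zeta(2-\zeta)$, giving exactly the factorization recorded in Definition \ref{defxyT}. Throughout the argument I read $(2\zeta-1)(\eta - g(\zeta))$ as the polynomial $(2\zeta-1)\eta - \zeta(2-\zeta)$, which is smooth and takes the value $-3/4$ at $\zeta = 1/2$.

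With the factorization in hand, the two constraints defining $\mathcal{T}^c$, namely $|\eta-\zeta| > k$ and $|(2\zeta-1)(\eta-g(\zeta))| > k$, feed into the numerator to produce
\begin{equation*}
|f(\eta,\zeta)| > \frac{k^3}{\zeta^2(1-\zeta)^2}.
\end{equation*}
For $\zeta \in [1/2,2]$ this is already bounded below by $k^3/4$. The apparent singularity at $\zeta = 1$ is in fact favorable, since a small denominator \emph{strengthens} the bound; and near $\zeta = 1/2$ constraint (ii) is automatic because $(2\zeta-1)(\eta-g(\zeta))$ tends to $-3/4$ there.

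The only real obstacle is the regime of large $\zeta$, where $\zeta^2(1-\zeta)^2 \sim \zeta^4$ and the above bound degenerates. Here I would exploit asymptotics: for $|\eta| \le 3/2$ and $\zeta \to +\infty$, $(\eta-\zeta)^2 \sim \zeta^2$, $2\zeta-1 \sim 2\zeta$, and $\eta - g(\zeta) = \eta + \zeta(\zeta-2)/(2\zeta-1) \sim \zeta/2$, while the denominator is $\sim \zeta^4$, so $f(\eta,\zeta) \to 1$ uniformly in $\eta$. Fixing an absolute threshold $R$ (for instance $R = 10$), elementary estimates give $|f| \ge 1/3$ on $\{\zeta \ge R,\ |\eta| \le 3/2\}$. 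In the remaining compact range $\zeta \in [2,R]$ the denominator is bounded, $|\eta-\zeta| \ge \zeta - 3/2 \ge 1/2$ is automatic, and constraint (ii) yields $|\eta-g(\zeta)| > k/(2\zeta-1)$; the $(2\zeta-1)$ factors cancel, leaving $|f| \gtrsim k \gtrsim k^3$ since $k \le 1/10$. Combining the three subcases delivers a single $C_0 > 0$ independent of $(m,n,m',n')$, and multiplication by $|\gamma||m|^3/n^2$ closes the argument. The delicate point, and really the only step where a naive approach fails, is the large-$\zeta$ regime: the $\mathcal{T}^c$-constraints alone are too weak there, and one must observe that the cubic structure of $l_{m,n}$ forces $f$ to be asymptotically constant.
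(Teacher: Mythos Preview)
Your proof is correct and follows essentially the same approach as the paper's: reduce to a lower bound on $|f(\eta,\zeta)|$, use the two $\mathcal{T}^c$-constraints to control the factored numerator when $\zeta$ is in a bounded range (the paper takes $[1/2,5]$, you take $[1/2,2]$ and then $[2,10]$), and for large $\zeta$ exploit $|\eta|\le 3/2$ to see that $f\to 1$, giving an absolute lower bound independent of $k$. Your derivation of the factorization $F=f$ is unnecessary since it is recorded in Definition~\ref{defxyT}, and your three-range split could be collapsed to two as in the paper, but the substance is identical.
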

\begin{proof}
Calculation shows 
$$
l_{m,n}-l_{m-m',n-n'} - l_{m',n'}
= \frac{\gamma m^3}{n^2} f(\eta,\zeta) 
$$
First we consider $ \frac{1}{2} \le \zeta \le 5$. 
In that case
$$ \frac{1}{|f(\eta,\zeta)|} = \Big | \frac{\zeta^2(1-\zeta)^2}{(\eta-\zeta)^2 
\left [ (2\zeta-1) \eta - (2\zeta-\zeta^2) \right ] } 
\Big | \le \frac{1}{k^3} \left ( |\zeta|^2 |1-\zeta|^2 \right )
\lesssim \frac{1}{k^3} $$
Now, if $\zeta > 5$, then since $|\eta| \leq \frac{3}{2} $, we have
$$ \frac{1}{|f(\eta,\zeta)|} \le \Big | \frac{\zeta^2(1-\zeta)^2}{(\zeta-\frac{3}{2})^2 
\left [ \zeta^2-2\zeta-\frac{3}{2} (2\zeta-1) \right ]} \Big | 
\lesssim 1 ,$$
from which the Lemma follows.
\end{proof}
\begin{Lemma}
\label{lemT}
Under conditions given in Proposition \ref{prop0}, if $|m| \ge |n|^{2-2b'+2b}$,
\begin{equation}  
\label{eqlemT}
\sum_{(m',n') \in \mathcal{T}^c} \frac{m^2 \rho_{m,n} Q_{m,n,m',n'}}{\rho_{m',n'} \rho_{m-m',n-n'}} \lesssim 1
\end{equation}
\end{Lemma}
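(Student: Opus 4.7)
\noindent The plan is to exploit the resonance-free geometry of $\mathcal{T}^c$. By Lemma \ref{lemfmin}, on $\mathcal{T}^c$ we have
$$|l_{m,n}-l_{m',n'}-l_{m-m',n-n'}|\gtrsim \frac{k(n)^3|m|^3}{n^2},$$
which, because $2b'-2<0$, sharpens the definition of $Q_{m,n,m',n'}$ to
$$Q_{m,n,m',n'}\lesssim |n|^{-4b}\left(n^2+\frac{k^3|m|^3}{n^2}\right)^{2b'-2}.$$

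A short computation using the hypothesis $|m|\ge |n|^{2-2(b'-b)}$ together with the lower bound $k(n)\ge |n|^{-2/3+2(b'-b)}$ shows $k^3|m|^3/n^2\ge n^2$ in the main regime, so the second term dominates and
$$Q_{m,n,m',n'}\lesssim |n|^{-4b-4b'+4}\,k^{6b'-6}\,|m|^{6b'-6}.$$
Since this bound is independent of $(m',n')$, I would pull it outside the sum and apply Lemma \ref{lem2} (legitimate because $s>5/2>1$) to collapse the $\rho$-ratio sum to $O(1)$, yielding
$$\sum_{(m',n')\in\mathcal{T}^c}\frac{m^2\rho_{m,n}Q_{m,n,m',n'}}{\rho_{m',n'}\rho_{m-m',n-n'}}\lesssim |m|^{6b'-4}\,|n|^{-4b-4b'+4}\,k^{6b'-6}.$$

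The final step is to convert this expression into a pure power of $|n|$ and check cancellation. Inserting $k^{6b'-6}\lesssim |n|^{(-2/3+2(b'-b))(6b'-6)}$ (valid since $6b'-6<0$) and $|m|^{6b'-4}\le |n|^{(2-2(b'-b))(6b'-4)}$ (valid since $6b'-4\le 0$ under $b'\le 2/3$ and using $|m|\ge |n|^{2-2(b'-b)}$), one finds by direct expansion that all cross terms in $b,b',bb',b'^2$ and the constant cancel, leaving the exponent of $|n|$ identically equal to zero. The main obstacle is precisely this algebraic cancellation: it succeeds only under the full constraint $b'\in[b,2b-\tfrac{1}{2}]\cap(\tfrac{1}{2},\tfrac{2}{3}]$, so one must record the exponents carefully to confirm the exact cancellation. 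A minor auxiliary check handles the bounded-$|n|$ regime where $k=\tfrac{1}{10}$: if the second term in $Q$ fails to dominate, then $|m|\le 10\,|n|^{4/3}$, and since only finitely many such $|n|$ arise, a crude estimate suffices. Combined, these complete the proof of \eqref{eqlemT}.
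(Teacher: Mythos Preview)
Your proof is correct and follows essentially the same route as the paper: use Lemma~\ref{lemfmin} to obtain the uniform pointwise bound $m^2 Q_{m,n,m',n'}\lesssim m^2|n|^{-4b}\bigl(k^3|m|^3/n^2\bigr)^{2b'-2}\lesssim 1$ on $\mathcal{T}^c$, then invoke Lemma~\ref{lem2} to dispose of the $\rho$-ratio sum. The paper's version is simply more terse, asserting the last $\lesssim 1$ ``under given restriction on $b,b'$'' without spelling out your exponent cancellation or the bounded-$|n|$ case; one minor slip in your write-up is calling $k(n)\ge|n|^{-2/3+2(b'-b)}$ a ``lower bound'' (in fact $k(n)=\min\{1/10,|n|^{-2/3+2(b'-b)}\}\le|n|^{-2/3+2(b'-b)}$, but in your main regime it is an equality, so the argument is unaffected).
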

\begin{proof}
First there is nothing to prove for $m =0$, since it is obvious left side is zero. So, we will assume $m \ne 0$.
We first prove
\begin{equation}
\label{Qb}
\sup_{(m',n') \in \mathcal{T}^c} m^2 Q_{m,n,m',n'} 
\lesssim 1 
\end{equation}
Applying Lemma \ref{lemfmin} the left side of 
\eqref{eqlemT} is 
bounded by a constant multiple of 
\begin{equation}
\frac{m^2}{|n|^{4b} \left (\frac{k^3 |m|^3}{n^2} \right )^{2(1-b')}} 
\lesssim 1 \ ,
\end{equation}
under given restriction on $b, b'$,
implying \eqref{Qb}. Further applying Lemma \ref{lem2}, the result easily follows. 
\end{proof}
\begin{Definition}
\label{defS}
For given $(m,n) \in \mathbb{Z}^2_0 \setminus \{ m=0 \}$, define
$$ \mathcal{S}_1 = 
\left \{ (m',n')\in \mathbb{Z}^2_{0,n,>}: 
|1-\eta| \le k_1 (n) \right \} \ , $$
$$ \mathcal{S}_0 = 
\left \{ (m',n')\in \mathbb{Z}^2_{0,n,>}: 
|\eta| \le k_1 (n) \right \} , $$
$$ k_1 (n) = \frac{1}{10 |n|} \ , $$
and $\mathcal{S} = \mathcal{S}_0 \cup \mathcal{S}_1 $.
We denote $\mathcal{S}^c$ to be its complement 
in the set $\mathbb{Z}^2_{0,n,>} \cap \{ (m',n'): |m'| \le \frac{3}{2} |m| \}$ \ , {\it i.e.}
$$
\mathcal{S}^c = \left \{ (m',n')\in \mathbb{Z}^2_{0,n,>}: 
\frac{3}{2} \geqslant |\eta| > k_1 (n), |1-\eta| > k_1(n)| \right \} $$
\end{Definition}

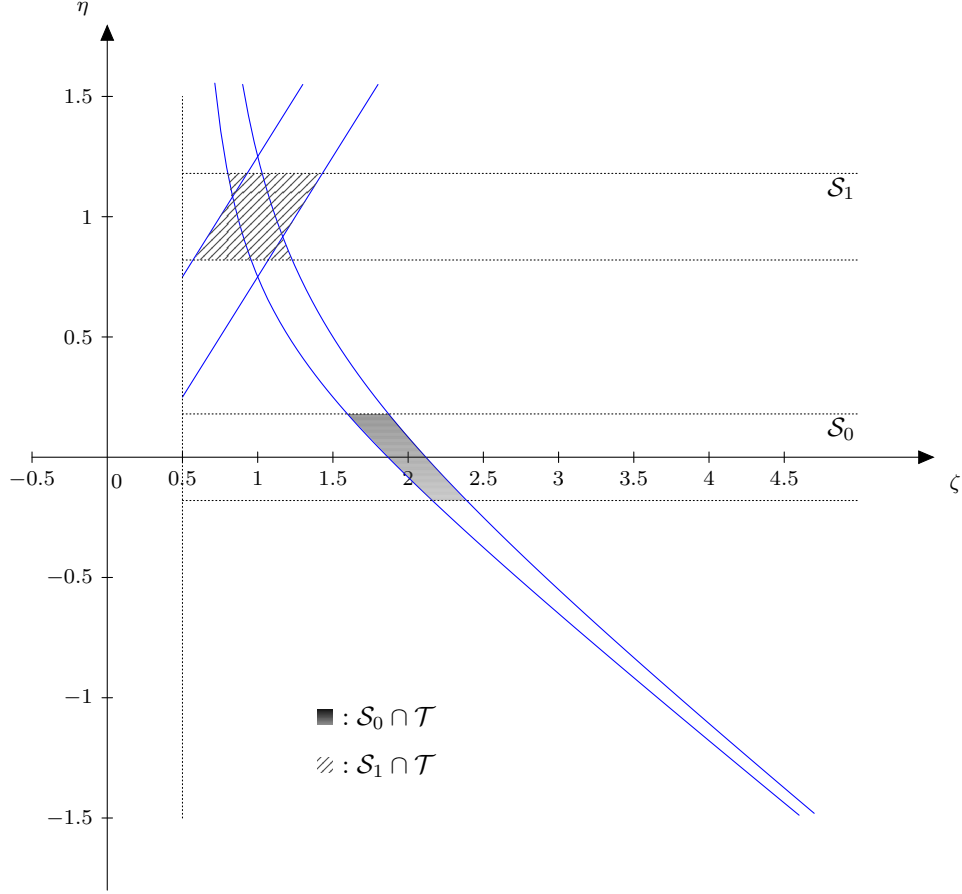
\begin{figure}
\begin{tikzpicture}[line cap=round,line join=round,>=triangle 45,x=2cm,y=3.2cm]
\draw[->,color=black] (-0.5,0) -- (5.5,0);
\foreach \x in {-0.5,0.5,1,1.5,2,2.5,3,3.5,4,4.5}
\draw[shift={(\x,0)},color=black] (0pt,2pt) -- (0pt,-2pt) node[below] {\footnotesize $\x$};
\draw[->,color=black] (0,-1.8) -- (0,1.8);
\foreach \y in {-1.5,-1,-0.5,0.5,1,1.5}
\draw[shift={(0,\y)},color=black] (2pt,0pt) -- (-2pt,0pt) node[left] {\footnotesize $\y$};
\draw[color=black] (-2pt,-9pt) node[right] {\footnotesize $0$};
\draw[color=black] (315pt,-10pt) node[right] {\footnotesize $\zeta$};
\draw[color=black] (-15pt,170pt) node[right] {\footnotesize $\eta$};
\draw[color=blue] plot[domain=0.5:1.3, range=-1.5:1.5] (\x,\x+0.25) node[right] {$$};
\draw[color=blue] plot[domain=0.5:1.8, range=-1.5:1.5] (\x,\x-0.25) node[right] {$$};
\draw [color=blue] plot[samples=100, domain=0.9:4.7, range=-1.5:1.5](\x,{0.25/(2*\x-1)+\x*(2-\x)/(2*\x-1)}) node[above left]{$$};
\draw [color=blue] plot[samples=100, domain=0.715:4.6, range=-1.5:1.5](\x,{-0.25/(2*\x-1)+\x*(2-\x)/(2*\x-1)}) node[above left]{$$};
\draw[-,densely dotted, color=black] (0.5,-1.5) -- (0.5,1.5);
\draw[densely dotted, color=black] plot[domain=0.5:5, range=-1.5:1.5] (\x,0.18) node[below left=-2pt] {$\mathcal{S}_0$};
\draw[densely dotted, color=black] plot[domain=0.5:5, range=-1.5:1.5] (\x,-0.18) node[right] {$$};
\draw[densely dotted, color=black] plot[domain=0.5:5, range=-1.5:1.5] (\x,1+0.18) node[below left=-2pt] {$\mathcal{S}_1$};
\draw[densely dotted, color=black] plot[domain=0.5:5, range=-1.5:1.5] (\x,1-0.18) node[right] {$$};
\shade[top color=black, bottom color=gray, opacity=0.3]
      (2.157,-0.179) -- (2.38,-0.179) -- (2.12,0) -- (1.87,0.179) -- (1.6,0.179) -- (1.87,0) -- cycle;
\fill[even odd rule,pattern=north east lines,pattern color=black!70,decoration=random steps]
      (0.577,1-0.179) -- (1.23,1-0.179) -- (1.165, 0.91) -- (1.43,1+0.179) -- (0.8,1.179) -- (0.825, 1.08) -- cycle;
\shade[top color=black, bottom color=gray, opacity=0.3]
      (1.4,-1.05) rectangle (1.5,-1.11);
\node [below right] at (+1.5,-1) {$: \mathcal{S}_0\cap \mathcal{T}$};
\fill[even odd rule,pattern=north east lines,pattern color=black!70,decoration=random steps]
      (1.4,-1.25) rectangle (1.5,-1.31);
\node [below right] at (+1.5,-1.2) {$: \mathcal{S}_1\cap \mathcal{T}$};
\end{tikzpicture}
\caption{$(\eta,\zeta)$-plane region corresponding to 
partition of $\mathbb{Z}^2_{0,n,>}\setminus \mathcal{B}_1$. $\mathcal{T}$ is the narrow
region between 
blue lines}\label{fig}
\end{figure}

\begin{Lemma}
\label{lemS}
Under conditions of Proposition \ref{prop0}, if $|m| \ge |n|^{2-2b'+2b}$,
\begin{equation}
\sum_{(m',n') \in \mathcal{S}^c} 
\frac{m^2 \rho_{m,n}Q_{m,n',m',n'}}{\rho_{m',n'} \rho_{m-m', n-n'} } \lesssim 
1
\end{equation}
\end{Lemma}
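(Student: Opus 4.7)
The natural first move is to split $\mathcal{S}^c = (\mathcal{S}^c \cap \mathcal{T}^c) \cup (\mathcal{S}^c \cap \mathcal{T})$. The sum over the first piece is dominated by the full sum over $\mathcal{T}^c$, for which Lemma \ref{lemT} already gives $\lesssim 1$. Hence all the real work lies in estimating the sum over $\mathcal{S}^c \cap \mathcal{T}$, where the resonance lower bound of Lemma \ref{lemfmin} is unavailable and one has only the trivial pointwise estimate $m^2 Q_{m,n,m',n'} \lesssim m^2 |n|^{4b'-4b-4}$ obtained by discarding the $|l_{m,n}-l_{m',n'}-l_{m-m',n-n'}|$ term inside the parenthesis that defines $Q$.

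In exchange for losing the resonance denominator, the set $\mathcal{T}$ is very thin, and I would extract the needed saving from its narrowness together with the $\rho$-decay furnished by $\mathcal{S}^c$. Concretely, I would split $\mathcal{S}^c \cap \mathcal{T}$ along its two defining clauses, the diagonal strip $|\eta - \zeta|\le k(n)$ and the hyperbolic strip $|(2\zeta-1)(\eta-g(\zeta))| \le k(n)$. For each fixed $n'$, these confine $m'$ to an interval of length $\lesssim k(n)|m|$, respectively $\lesssim k(n)|m|/|2\zeta-1|$, and so contribute at most that many integer values. The $\mathcal{S}^c$-conditions $|m'|, |m-m'| \ge |m|/(10|n|)$ together with $|n'|\ge|n|/2$ and the approximate relation $m' \approx \eta\,m$ forced by being in $\mathcal{T}$ yield $\rho_{m',n'} \gtrsim (|n'||m|/|n|)^{2s}$ and $\rho_{m-m',n-n'} \gtrsim (|n-n'||m|/|n|)^{2s}$; after the $m'$-count, the remaining sum over $n'$ of $|n'|^{-2s}|n-n'|^{-2s}$ is controlled by the same elementary geometric argument used in Lemma \ref{lem2}. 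Plugging in $k(n) \le |n|^{-2/3+2(b'-b)}$ and the hypothesis $|m| \ge |n|^{2-2(b'-b)}$, the collected powers reduce to a single scalar inequality whose exponent turns out to be $4/3 - 2s(1 - 2(b'-b))$, and this is non-positive under $s>5/2$ and $b'-b \le b - 1/2 < 1/6$ with room to spare.

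The main technical obstacle I anticipate is the hyperbolic strip when $\zeta$ approaches $1/2$: there $g(\zeta)$ diverges, the band widens like $|2\zeta-1|^{-1}$, and the approximate identity $m' \approx g(\zeta)\,m$ forces $|m'|$ to be of order $|m|/|2\zeta-1|$. The expected cancellation is that the enhanced $\rho_{m',n'}^{-1}$-decay exactly absorbs the enlargement of the band, but making this balance quantitative and uniform requires care. The symmetry $(m',n') \leftrightarrow (m-m',n-n')$ already exploited in Lemma \ref{lem0}, combined with the restriction $\zeta \ge 1/2$, should allow one to reduce the problem to the regime where $|2\zeta-1|$ is bounded below uniformly, after which the counting and the $\rho$-decay combine in a case-independent way and the remaining bookkeeping becomes routine.
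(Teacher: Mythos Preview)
Your plan is viable, but it is considerably more elaborate than what the paper actually does, and in one place your reasoning is off. The paper's proof of this lemma does \emph{not} split $\mathcal{S}^c$ along $\mathcal{T}/\mathcal{T}^c$ at all. It simply uses the trivial bound $Q_{m,n,m',n'}\le |n|^{-4+4(b'-b)}$ on all of $\mathcal{S}^c$, passes from the sum to a double integral in the variables $(\eta,\zeta)=(m'/m,n'/n)$ with Jacobian $|m|\,|n|$, integrates first in $\zeta\in[1/2,\infty)$ (this produces a factor $|n|^{-1}$ and lowers one $\rho$-exponent from $2s$ to $2s-1$), and then integrates in $\eta$ over $[-3/2,-k_1]\cup[k_1,1-k_1]\cup[1+k_1,3/2]$. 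The sole purpose of the $\mathcal{S}^c$ restriction is to keep $\eta$ at distance $\ge k_1(n)=1/(10|n|)$ from $0$ and $1$, which makes the $\eta$-integral finite with the right power of $|n|$; the final expression is $\lesssim |n|^{4(b'-b)-4}\,k_1(n)^{-(2s-1)}|m|^{-(2s-4)}$, and the hypothesis $|m|\ge|n|^{2-2(b'-b)}$ with $s>5/2$ closes it. No thinness of $\mathcal{T}$, no counting of $m'$, and no resonance lower bound are used here.

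Your approach can be pushed through, but two comments. First, your lower bound $\rho_{m-m',n-n'}\gtrsim(|n-n'||m|/|n|)^{2s}$ on the diagonal strip is not quite right when $\zeta$ is within $k(n)$ of $1$ (there $|1-\zeta|\ll k(n)$ so $|1-\eta|$ is not comparable to $|1-\zeta|$); the weaker bound $\rho_{m-m',n-n'}\gtrsim(|m|/|n|)^{2s}$ coming from $\mathcal{S}^c$ still suffices, at the cost of shifting your final exponent from $4/3$ to $7/3$, which is still $\le 2s(1-2(b'-b))$ under the hypotheses. Second, your proposed fix for the hyperbolic strip near $\zeta=1/2$ via the symmetry $(m',n')\leftrightarrow(m-m',n-n')$ does not work: that symmetry sends $\zeta\mapsto 1-\zeta$, so it maps the region just above $1/2$ to the region just below $1/2$ and gives no lower bound on $|2\zeta-1|$. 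Fortunately the worry is moot: on the hyperbolic strip one has $(2\zeta-1)\eta=\zeta(2-\zeta)+O(k(n))$, and since $\zeta(2-\zeta)\ge 3/4$ for $\zeta\in[1/2,3/2]$ while $|\eta|\le 3/2$, this forces $|2\zeta-1|\ge (3/4-k(n))/(3/2)\gtrsim 1$ directly, so $\zeta$ never actually approaches $1/2$ in $\mathcal{S}^c\cap\mathcal{T}$.
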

\begin{proof}
We note that
\begin{multline}
\sum_{(m',n') \in \mathcal{S}^c}
\frac{m^2 \rho_{m,n} Q_{m,n,m',n'}}{\rho_{m',n'} \rho_{m-m', n-n'} } 
\\
\lesssim  
\left \{ \int_{-\frac{3}{2}}^{-k_1}+\int_{k_1}^{1-k_1} + \int_{1+k_1}^{\frac{3}{2}} \right \} 
\int_{\frac{1}{2}}^{\infty} 
\frac{|m|^3 |n| Q_{m,n,m',n'}\rho_{m,n} d\zeta d\eta}{(1+|\eta| |m|+|\zeta| |n|)^{2s}
(1+|1-\eta| |m| + |1-\zeta| |n|)^{2s} }  \\
\lesssim \frac{|m|^3}{|n|^{3-4b'+4b}}
\left \{ \int_{-\frac{3}{2}}^{-k_1} + \int_{k_1}^{1-k_1} + \int_{1+k_1}^{\frac{3}{2}} \right \} \int_{\frac{1}{2}}^{\infty}
\frac{\rho_{m,n} d\zeta d\eta}{ (|n| + |\eta| |m| )^{2s}
(1+|1-\eta| |m| + |1-\zeta| |n|)^{2s}}  \\ 
\lesssim \frac{\rho_{m,n} |m|^3}{|n|^{4-4b'+4b}}
\left \{ \int_{-\frac{3}{2}}^{-k_1} + \int_{k_1}^{\frac{1}{2}}+\int_{\frac{1}{2}}^{1-k_1} + \int_{1+k_1}^{\frac{3}{2}} \right \}
\frac{d\eta}{
(|n|+|\eta| |m| )^{2s} (1+|1-\eta| |m| )^{2s-1}}  \\
\lesssim \frac{|n|^{4b'-4b}}{|n|^4 |k_1|^{2s-1} |m|^{2s-4}} \lesssim 1 \ ,
\end{multline}
under the restriction given
where in the last step we used the fact that in each of the four integration range, only one of the
$\rho_{m,n} (|n|+|\eta| |m|)^{-2s} \lesssim 1$ or $\rho_{m,n} (1+|1-\eta||m|)^{-2s} 
\lesssim 1$, while the remaining
factor in the integrand 
is $O((|k_1||m|)^{-2s+1})$
\end{proof}
\begin{Remark}
{\rm 
The previous two Lemmas give control over $\mathcal{S}^c\cup \mathcal{T}^c $. The complement set
$\mathcal{S}\cap \mathcal{T}$ has two components: $\mathcal{S}_1\cap \mathcal{T}$, 
that corresponds to a neighborhood of $(\eta,\zeta) = 
(1,1)$ and $\mathcal{S}_0 \cap \mathcal{T}$ that includes a
neighborhood of 
$(\eta,\zeta) = (0, 2)$.}
\end{Remark}
\begin{Lemma}
\label{lemS1T}
Under conditions of Proposition \ref{prop0}, if $|m| \ge |n|^{2-2b'+2b}$,
\begin{equation}
\sum_{(m',n')\in \mathcal{S}_1 \cap \mathcal{T}}  \frac{m^2 Q_{m,n,m',n'} \rho_{m,n}}{\rho_{m',n'} \rho_{m-m',n-n'}} 
\lesssim 1
\end{equation}
\end{Lemma}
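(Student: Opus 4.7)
The plan is to parametrize $\mathcal{S}_{1}\cap\mathcal{T}$ by the integer shifts $p=m-m'$ and $q=n-n'$. The condition $(m',n')\in\mathcal{S}_{1}$ forces $|p|\le|m|/(10|n|)$, and together with $\mathcal{T}$ this confines $(\eta,\zeta)=(1-p/m,\,1-q/n)$ to a neighborhood of $(1,1)$. Hence $m'\approx m$ and $n'\approx n$, so $\rho_{m,n}/\rho_{m',n'}\lesssim 1$ and $\rho_{m-m',n-n'}=(|p|+|q|)^{2s}$, and the lemma reduces to showing
\[
\sum_{(p,q)}\frac{m^{2}\,Q_{m,n,m-p,n-q}}{(|p|+|q|)^{2s}}\;\lesssim\;1,
\]
where the sum ranges over $(p,q)\in\mathcal{S}_{1}\cap\mathcal{T}$ with $q\ne 0$.

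The central input is a local Taylor expansion of $N=\zeta^{2}(1-\zeta)^{2}-(1-\eta)^{3}\zeta^{2}-\eta^{3}(1-\zeta)^{2}$ around $(1,1)$. Writing $\epsilon_{1}=\eta-1$ and $\epsilon_{2}=\zeta-1$, a direct computation gives
\[
N=\epsilon_{1}^{3}-3\epsilon_{1}\epsilon_{2}^{2}+2\epsilon_{2}^{3}+O(\epsilon^{4})=(\epsilon_{1}-\epsilon_{2})^{2}(\epsilon_{1}+2\epsilon_{2})+O(\epsilon^{4}),
\]
so that $m^{3}f/n^{2}\approx -(pn-qm)^{2}(pn+2qm)/(n^{3}q^{2})$. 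Using the identity $(2\zeta-1)(1-g(\zeta))=\zeta^{2}-1$, Case 1 of $\mathcal{T}$ ($|\eta-\zeta|\le k(n)$) becomes $|pn-qm|\le k(n)|mn|$, while Case 2 ($|(2\zeta-1)(\eta-g(\zeta))|\le k(n)$) reduces near $(1,1)$ to $|pn+2qm|\lesssim k(n)|mn|$. Combined with $|p|\le|m|/(10|n|)$, both cases force $|q|\lesssim k(n)|n|$, which severely constrains the summation range.

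Next I would bound $Q_{m,n,m-p,n-q}$ via a dichotomy. Under the hypothesis $|m|\ge|n|^{2-2(b'-b)}$, elementary arithmetic shows that for the worst $(p,q)$ (e.g., $(0,\pm 1)$) we have $|m^{3}f|/n^{2}\gg n^{2}$, so the \emph{large-$f$} bound $Q\lesssim|n|^{-4b}(|m^{3}f|/n^{2})^{2b'-2}$ applies. In Case 1 this yields $m^{2}Q\lesssim|n|^{-4b-6b'+6}\,m^{6b'-4}\,|q|^{2b'-2}$ for $p=0$; summing $(|p|+|q|)^{-2s}$ over the admissible $p$ (in an interval of length $\lesssim\min(k(n)|m|,\,|m|/|n|)$ about $qm/n$) and then over $1\le|q|\lesssim k(n)|n|$, using $s>2$ to ensure convergence, produces a bound that the hypothesis on $|m|$ absorbs to $\lesssim 1$. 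Case 2 is handled analogously (with the roles of $pn-qm$ and $pn+2qm$ swapped), and the complementary \emph{small-$f$} subregime is handled by the trivial bound $Q\lesssim|n|^{4(b'-b)-4}$ together with the tight constraints on $(p,q)$.

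The main obstacle is the careful book-keeping at the junction of the two $Q$-regimes and at the borderline $|m|=|n|^{2-2(b'-b)}$ where the estimates are saturated (for instance, at $b=1/2,\,b'=2/3$ the contribution of $(p,q)=(0,1)$ is exactly $O(1)$). Additional care is needed to check that the higher-order corrections to the Taylor expansion of $N$ do not spoil the leading estimate; this is possible because within $\mathcal{S}_{1}\cap\mathcal{T}$ both $|\epsilon_{1}|\lesssim 1/|n|$ and $|\epsilon_{2}|\lesssim k(n)$ are small, so the cubic leading terms genuinely dominate throughout the relevant region.
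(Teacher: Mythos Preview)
Your strategy is workable in spirit, but you are making the problem much harder than it is and, in the process, introducing regimes and case distinctions that do not actually occur. The paper's proof dispenses with all of your Case~1/Case~2 analysis, the explicit summation over $(p,q)$, and the ``small-$f$'' subregime by a single observation you overlooked.

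The point is this: on $\mathcal{S}_1$ one has $|\delta|=|\eta-1|\le k_1(n)=\tfrac{1}{10|n|}$, while discreteness of $n'$ (together with $n'\ne n$) forces $|\Delta|=|\zeta-1|\ge \tfrac{1}{|n|}$; membership in $\mathcal{T}$ with $\eta\approx 1$ forces $|\Delta|$ to be small (say $|\Delta|<\tfrac{2}{9}$). Hence $|\delta|\le |\Delta|/10$ \emph{uniformly} on $\mathcal{S}_1\cap\mathcal{T}$. Plugging this into the exact formula
\[
f(\eta,\zeta)=\frac{(\Delta-\delta)^2}{\zeta^2\Delta^2}(1+2\Delta)\Bigl(\tfrac{\Delta(2+\Delta)}{1+2\Delta}+\delta\Bigr)
\]
(rather than its Taylor polynomial) immediately gives $|\Delta-\delta|\gtrsim|\Delta|$ and $|\tfrac{\Delta(2+\Delta)}{1+2\Delta}+\delta|\gtrsim|\Delta|$, so $|f|\gtrsim|\Delta|\ge 1/|n|$. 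There is no branch of $\mathcal{T}$ on which $f$ degenerates here; both factors in your cubic are simultaneously bounded below by $c|\Delta|$, so the ``junction of the two $Q$-regimes'' that you flag as the main obstacle never arises. The hypothesis $|m|\ge|n|^{2-2(b'-b)}$ then yields $m^2Q_{m,n,m',n'}\lesssim |n|^{6-6b'-4b}/|m|^{4-6b'}\lesssim 1$ pointwise, and the entire sum is handled by the already-established Lemma~\ref{lem2} for $\sum \rho_{m,n}/(\rho_{m',n'}\rho_{m-m',n-n'})$---no need to count admissible $p$ in an interval about $qm/n$ or to sum in $q$.

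In short, the specific choice $k_1(n)=\tfrac{1}{10|n|}$ in Definition~\ref{defS} was engineered precisely so that $|\delta|\le|\Delta|/10$ on $\mathcal{S}_1\cap\mathcal{T}$, which collapses your dichotomy. Your Taylor expansion recovers the same cubic structure, but by discarding the exact denominator $\zeta^2\Delta^2$ you lose the clean lower bound $|f|\gtrsim|\Delta|$ and are forced into the case splits and remainder estimates that you (correctly) worry about at the end of your proposal.
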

\begin{proof}
We note for $(m',n') \in \mathcal{S}_1 \cap \mathcal{T}$, $(\eta,\zeta)$ is in some small neighborhood of
$(\eta,\zeta) = (1,1)$ (see top shaded region in Figure 1). 
For large $|n|$, The size of this region is $O (|n|^{-2/3+2b'-2b}) $ horizontally 
and $O(|n|^{-1})$ vertically.   
It is useful to write $(\eta,\zeta) = (1+\delta, 1+\Delta)$ and then it is clear that
$$ f(\eta,\zeta) = 
\frac{(\Delta-\delta)^2}{\zeta^2 \Delta^2} ( 1+2 \Delta) \left (\frac{\Delta (2+\Delta)}{1+2\Delta} +
\delta \right ) $$
Note $|\delta| \le 1/(10 |n|)$ and we can assume that for $(\eta,\zeta)$ with corresponding
$(m',n') \in \mathcal{S}_1 \cap \mathcal{T}$,
$|\Delta| < \frac{2}{9}$ and given the discreteness of $n'$,
$|\Delta| \ge \frac{1}{|n|}$; hence $|\delta| \le \frac{|\Delta|}{10}$. 
It is then clear from the above expression for $f$ that 
$ |f (\eta,\zeta) | \gtrsim |\Delta| \ge \frac{1}{|n|}$, implying that 
$$ m^2 Q_{m,n,m',n'}  \lesssim m^2 |n|^{-4b} \left ( n^2 + \frac{|m|^3}{|n|^3} \right )^{2b'-2}      
\lesssim \frac{|n|^{6-6b'-4b}}{|m|^{4-6b'}} \lesssim 1 $$
Therefore, from applying Lemma \ref{lem2}, it follows that  
$$
\sum_{(m',n')\in \mathcal{S}_1 \cap \mathcal{T}}  
\frac{m^2 Q_{m,n,m',n'} \rho_{m,n}}{\rho_{m',n'} \rho_{m-m',n-n'}} 
\lesssim 1 
$$
\end{proof}
\begin{Lemma}
\label{lemS0T}
Under conditions of Proposition \ref{prop0}, if $|m| \geqslant |n|^{2-2b'+2b}$, then
\begin{equation}
\sum_{(m',n')\in \mathcal{S}_0 \cap \mathcal{T}, n' \ne 2 n}  
\frac{m^2 Q_{m,n,m',n'} \rho_{m,n}}{\rho_{m',n'} \rho_{m-m',n-n'}} 
\lesssim 1
\end{equation}
\end{Lemma}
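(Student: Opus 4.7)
The plan is to follow Lemma \ref{lemS1T}: use the identity $l_{m,n}-l_{m',n'}-l_{m-m',n-n'}=(\gamma m^3/n^2)f(\eta,\zeta)$, derive a lower bound for $|f|$ on $\mathcal{S}_0\cap\mathcal{T}$ with $n'\ne 2n$, and then sum by direct counting. First I localise $\zeta$: on $\mathcal{S}_0$, $|\eta|\le 1/(10|n|)$, and $\zeta\ge 1/2$ from $\mathbb{Z}^2_{0,n,>}$, so the branch $|\eta-\zeta|\le k(n)\le 1/10$ of $\mathcal{T}$ is impossible. Thus $(m',n')\in\mathcal{T}$ forces $|(2\zeta-1)\eta-\zeta(2-\zeta)|\le k(n)$; combined with $|(2\zeta-1)\eta|\le(2|\zeta|+1)/(10|n|)$, this forces $\zeta$ bounded and close to the nonzero root $\zeta=2$ of $\zeta(2-\zeta)$, quantitatively $|\zeta-2|\lesssim k(n)+1/|n|$.

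Writing $\zeta=2+\Delta$ and using $g(2+\Delta)=-(2+\Delta)\Delta/(3+2\Delta)$, direct expansion gives
\[
f(\eta,2+\Delta)=\frac{(2+\Delta-\eta)^2}{(2+\Delta)^2(1+\Delta)^2}\bigl[(3+2\Delta)\eta+(2+\Delta)\Delta\bigr].
\]
Because $n'\ne 2n$ and $n'\in\mathbb{Z}$, $|\Delta|\ge 1/|n|\ge 10|\eta|$, so the bracket has magnitude $\sim|\Delta|$ and $|f|\gtrsim|\Delta|\ge 1/|n|$. Hence $|l_{m,n}-l_{m',n'}-l_{m-m',n-n'}|\gtrsim|m|^3|\Delta|/n^2$. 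The parameter constraints of Proposition \ref{prop0} force $b'-b\le 1/12$, and the standing hypothesis $|m|\ge|n|^{2-2(b'-b)}$ then yields $|m|^3/|n|^3\ge n^2$, so
\[
Q_{m,n,m',n'}\lesssim|n|^{-4b}\bigl(|m|^3|\Delta|/n^2\bigr)^{2b'-2}.
\]

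For the summation, I first observe that $|m'|\le|m|/(10|n|)$ combined with $|n'|\approx 2|n|$ gives $\rho_{m-m',n-n'}\gtrsim\rho_{m,n}$ (so that $\rho_{m,n}/\rho_{m-m',n-n'}\lesssim 1$), while
\[
\sum_{|m'|\le|m|/(10|n|)} \frac{1}{\rho_{m',n'}} \;\lesssim\; \frac{1}{|n|^{2s-1}}
\]
because $s>5/2$ and $|n'|\approx 2|n|$. Summing then $|\Delta n|^{2b'-2}$ over the admissible range $|\Delta n|\in\{1,\ldots,O(|n|k(n))\}$ yields $\lesssim(|n|k(n))^{2b'-1}$, since $2b'-1>0$. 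Substituting $k(n)=|n|^{-2/3+2b'-2b}$ into the resulting exponent bookkeeping gives the bound $\lesssim|m|^{6b'-4}|n|^{6-4b-4b'-2s}k(n)^{2b'-1}\lesssim 1$, using $b'\le 2/3$ (so $|m|^{6b'-4}\le 1$) and $s>5/2$ (so the $|n|$ exponent is negative). The main technical difficulty absent from Lemma \ref{lemS1T} is that $\mathcal{S}_0\cap\mathcal{T}$ is a strip of $\zeta$-width $\sim k(n)$ rather than essentially a single lattice point, so a worst-case $|\Delta|$ bound is too weak; instead one must sum $|\Delta|^{2b'-2}$ over the admissible lattice, and the resulting partial sum is controllable precisely because the resonant point $n'=2n$ has been removed and is treated separately by Lemma \ref{lemB0}.
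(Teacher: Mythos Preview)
Your proof is correct, but it is more elaborate than necessary, and the final paragraph contains a misconception. You assert that the worst-case bound $|\Delta|\ge 1/|n|$ is too weak here, forcing a genuine summation of $|\Delta|^{2b'-2}$ over the admissible lattice. In fact the paper proceeds exactly as in Lemma~\ref{lemS1T}: from $|f(\eta,\zeta)|\gtrsim|\Delta|\ge 1/|n|$ one gets the uniform bound
\[
m^{2}Q_{m,n,m',n'}\;\lesssim\;\frac{|n|^{6-6b'-4b}}{|m|^{4-6b'}}\;\lesssim\;1,
\]
the last inequality following from $6b'-4<0$ together with $|m|\ge|n|^{2-2(b'-b)}$ (a short computation shows the resulting exponent of $|n|$ is nonpositive precisely because $b'-b\le 1/6$ under the stated constraints). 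Lemma~\ref{lem2} then disposes of the $\rho$-ratio sum in one stroke. Note also that your premise about Lemma~\ref{lemS1T} is off: $\mathcal{S}_1\cap\mathcal{T}$ is likewise a strip of $\zeta$-width $\sim k(n)$, not essentially a single lattice point, and the paper handles it by the same uniform bound plus Lemma~\ref{lem2}.

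What your approach buys is a sharper dependence on the $n'$-variable, at the cost of replacing the one-line appeal to Lemma~\ref{lem2} with separate $\rho$-ratio estimates and a lattice count; but that extra precision is not needed here.
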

\begin{proof}
We note that for 
$(m',n')\in \mathcal{S}_0 \cap \mathcal{T}$, $n' \ne 2 n$, $(\eta,\zeta)$ 
is in a small neighborhood of $(0,2)$ not
including $(0, 2)$ itself, and for large $n$,
$|\eta| = O (|n|^{-1})$ and $|2-\zeta| = O (|n|^{-2/3+2b'-2b})$. 
It is convenient to write $
(\eta,\zeta) = 
\left ( \delta , 2+\Delta \right )$, 
and note that from discreteness of $n'$, $|\Delta| \ge \frac{1}{|n|}$.  
Then, 
we find
\begin{equation}
f(\eta,\zeta) = \frac{(2+\Delta-\delta)^2}{\zeta^2 (\zeta-1)^2} 
\left ( 2 \Delta + \delta (2 \Delta+3 ) + \Delta^2 \right ) 
\end{equation}
From the choice of $k_1 (n)$ and the $\frac{1}{|n|}$ lower bound of $|\Delta|$, it follows that
$|\delta | \le \frac{|\Delta|}{10}$, and therefore it is clear 
from above that $|f(\eta,\zeta)| \gtrsim |\Delta| \ge \frac{1}{|n|}$ and
thus $ Q_{m,n,m',n'} \lesssim |m|^{6b'-6} |n|^{6-6b'-4b}$. Therefore, it follows that 
$$ m^2 Q_{m,n,m',n'} \lesssim \frac{|n|^{6-6b'-4b}}{|m|^{4-6b'}}  \lesssim 1 $$
Therefore, from applying Lemma \ref{lem2}, it follows that  
$$
\sum_{(m',n')\in \mathcal{S}_0 \cap \mathcal{T}, n' \ne 2 n}  
\frac{m^2 Q_{m,n,m',n'} \rho_{m,n}}{\rho_{m',n'} \rho_{m-m',n-n'}} 
\lesssim 1 
$$
\end{proof}
\begin{Remark} Though the set 
$\left \{ (m',n') \in \mathcal{S}_0 \cap \mathcal{T} : n'=2n \right \}$ is not
covered by Lemma \ref{lemS0T}, we don't have to worry about this since control over the set $\mathcal{B}_0$
has already been shown in Lemma \ref{lemB0}.  
\end{Remark}
\noindent{\bf Proof of Proposition \ref{prop0}} now follows by applying
Lemma \ref{lem0} in \ref{lemB0}, 
and then using  Lemma \ref{lem1}
in 
Proposition \ref{prop1}, Lemmas \ref{lem3}, \ref{lemT}, \ref{lemS}, \ref{lemS1T}
and \ref{lemS0T}. In this context, it is useful to note that
$$\mathbb{Z}^{2}_{0,n, >} = \mathcal{B}_0 \cup \mathcal{B}_1 
\cup
\mathcal{S}^c \cup \mathcal{T}^c \cup \{ \mathcal{S}_1\cap \mathcal{T} \} \cup 
\left \{ \{ \mathcal{S}_0 \cap \mathcal{T} \} \setminus \mathcal{B}_0 \right \} \ ,
$$
where $\mathcal{B}_0$, $\mathcal{B}_1$, $\mathcal{S}^c$, $\mathcal{T}^c$, $\mathcal{T}$, $\mathcal{S}_0$ and
$\mathcal{S}_1$ are defined in Definitions \ref{defB0}, \ref{defB1}, \ref{defS} and \ref{defxyT}.

\section{Appendix}
\begin{Lemma}{(Based on Tao \cite{Tao}).}
\label{lem4.1}
If $\phi$ is any smooth function with support in $(-2\delta, 2\delta )$ and
$ \frac{1}{2} < b \le  b' < b + \frac{1}{2}$, with $b' < 1$, then 
\begin{equation}
\int_{\mathbb{R}}  \left ( n^2 + | \tau - l_{m,n} | \right )^{2b} \Big | \mathcal{F} [ \phi q] (\tau) 
\Big |^2 d\tau 
\lesssim \delta^{2b'-2b} \int_{\mathbb{R}} \left ( n^2 + | \tau-l_{m,n} ) | \right )^{2b'} \Big | 
\mathcal{F} [ q ] (\tau) \Big |^2 d\tau
\end{equation} 
\end{Lemma}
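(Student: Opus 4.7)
The plan is to reduce the weighted cutoff estimate to the classical Bourgain-Tao cutoff lemma for time Sobolev norms, via two changes of variable: a modulation to remove $l_{m,n}$, followed by a time rescaling that converts the weight $(n^2+|\tau-l_{m,n}|)^{2b}$ into the standard Japanese bracket.

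\textbf{Step 1 (Modulation).} Since $e^{i l_{m,n} t}$ is an $L^2$-unitary whose Fourier action is the translation $\tau\mapsto \tau-l_{m,n}$, replacing $q(t)$ by $e^{i l_{m,n} t}q(t)$ shifts both $\mathcal{F}[\phi q](\tau)$ and $\hat q(\tau)$ by $l_{m,n}$. After the change of variable $\tau\to\tau+l_{m,n}$ in both integrals, I may assume $l_{m,n}=0$ and work with the simpler weight $(n^2+|\tau|)^{2b}$.

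\textbf{Step 2 (Time rescaling).} Put $s=n^2 t$, $\tilde q(s)=q(s/n^2)$, $\tilde\phi(s)=\phi(s/n^2)$. Then $\mathcal{F}[\tilde q](\sigma) = n^2\hat q(n^2\sigma)$ and $\mathcal{F}[\tilde\phi\tilde q](\sigma)=n^2\mathcal{F}[\phi q](n^2\sigma)$, and the dual rescaling $\sigma=\tau/n^2$ turns $n^2+|\tau|$ into $n^2(1+|\sigma|)$. A direct substitution gives
\begin{equation*}
\int(n^2+|\tau|)^{2b}|\mathcal{F}[\phi q]|^2 d\tau = n^{4b-2}\|\tilde\phi\tilde q\|_{H^b_s}^2, \qquad \int(n^2+|\tau|)^{2b'}|\hat q|^2 d\tau = n^{4b'-2}\|\tilde q\|_{H^{b'}_s}^2.
\end{equation*}
This exposes the weight $(n^2+|\tau|)^{2b}$ as the usual inhomogeneous Sobolev weight in the variable $s = n^2 t$.

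\textbf{Step 3 (Apply classical cutoff).} The rescaled cutoff factors as $\tilde\phi(s) = \tilde\phi_0(s/T)$ with effective time scale $T:=n^2\delta$ and profile $\tilde\phi_0(r):=\phi(r\delta)$. Because $\phi$ transitions from $0$ to $1$ on scale $\delta$, one has $|\phi^{(k)}|\lesssim \delta^{-k}$, so $\tilde\phi_0$ is a bump function whose $C^k$ norms are bounded independently of $\delta$ and $n$. Thus the classical Bourgain-Tao cutoff lemma (\cite{Tao}), valid for $\tfrac{1}{2}<b\le b' < b+\tfrac{1}{2}$, $b'<1$, applies at time scale $T$ to give
\begin{equation*}
\|\tilde\phi\tilde q\|_{H^b_s}^2 \lesssim T^{2(b'-b)}\|\tilde q\|_{H^{b'}_s}^2 = n^{4(b'-b)}\delta^{2(b'-b)}\|\tilde q\|_{H^{b'}_s}^2.
\end{equation*}
Multiplying by $n^{4b-2}$ and matching with the right-hand side of Step 2 yields exactly $\delta^{2(b'-b)}$ times $n^{4b'-2}\|\tilde q\|_{H^{b'}_s}^2$, as required.

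The only potentially subtle point is verifying that the rescaled bump $\tilde\phi_0$ has norms uniform in $\delta$ and $n$, which is immediate once one notes $|\phi^{(k)}|\lesssim \delta^{-k}$. After that, the entire result is a scaling consequence of the classical unweighted cutoff estimate, with effective time length $T=n^2\delta$ replacing $\delta$.
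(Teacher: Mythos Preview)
Your Steps 1--2 are correct and clean: the modulation removes $l_{m,n}$, the time rescaling $s=n^2t$ converts $(n^2+|\tau|)^{2b}$ into the standard bracket $(1+|\sigma|)^{2b}$, and the resulting cutoff is $\tilde\phi_0(\cdot/T)$ with $T=n^2\delta$ and $\tilde\phi_0(r)=\phi(r\delta)$ a bump whose $C^k$ bounds are indeed uniform in $\delta,n$ once one uses $|\phi^{(k)}|\lesssim\delta^{-k}$. The algebra matching the two sides after rescaling is also right.

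The gap is Step 3. The estimate you invoke,
\[
\|\tilde\phi_0(\cdot/T)f\|_{H^b}\ \lesssim\ T^{\,b'-b}\,\|f\|_{H^{b'}}\qquad\text{for }\tfrac12<b\le b'<1,
\]
is \emph{not} the classical Bourgain--Tao cutoff lemma and in fact fails in this range when $T<1$. The standard versions in \cite{Tao} and \cite{KoenigPV} yield a positive power of $T$ only for Sobolev exponents below $\tfrac12$. For $b'>\tfrac12$ one has $H^{b'}\hookrightarrow C^0$, so pick any $f$ with $f(0)\neq0$; then $\tilde\phi_0(\cdot/T)f\approx f(0)\,\tilde\phi_0(\cdot/T)$ for small $T$, and a direct computation gives $\|\tilde\phi_0(\cdot/T)\|_{H^b}^2\sim T^{1-2b}$. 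This is \emph{not} controlled by $T^{2(b'-b)}\|f\|_{H^{b'}}^2$ as $T\to0$ whenever $b'>\tfrac12$. Since $T=n^2\delta$ is small for $|n|=1$, your black-box appeal fails exactly in a regime the lemma must cover. (For $T\ge1$ the estimate is of course trivial, so your rescaling does dispose of all large $|n|$; the difficulty is entirely at bounded $|n|$.)

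By contrast, the paper does not rescale. It argues directly on the convolution $\widehat{\phi q}=\Phi*Q$ by splitting $Q=U^{(1)}+U^{(2)}$ according to whether $n^2+|\tau-l_{m,n}|$ exceeds $1/\delta$: the high-frequency piece $U^{(1)}$ is handled by Minkowski in the convolution variable together with the pointwise gain $(n^2+|\tau-l_{m,n}|)^{-(b'-b)}\le\delta^{\,b'-b}$ on its support, while the low-frequency piece $U^{(2)}$ is handled by Cauchy--Schwarz, the factor $\delta^{2b'-2b-1}$ coming from the measure $\int_{\{n^2+|\tau_1-l_{m,n}|\le1/\delta\}}(n^2+|\tau_1-l_{m,n}|)^{-2(b'-b)}d\tau_1$. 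That decomposition is where the $\delta$-power genuinely originates; it cannot be replaced by a one-line citation of a scaled $H^b$ cutoff lemma in the range $b>\tfrac12$.
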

\begin{proof}
We define $\Phi (\tau) = \mathcal{F} [\phi] (\tau)$ and $Q(\tau) = \mathcal{F} [ q ] (\tau)$.
Then, we note that $\mathcal{F} [ \phi q ] (\tau) = \left [ Q*\Phi \right ] (\tau)$, where $~*~$ is
the Fourier-Convolution.
We decompose 
\begin{equation}
\label{4.2}
Q(\tau) = U^{(1)} (\tau) + U^{(2)} (\tau) \ , 
\end{equation}
where support of $U^{(2)}$ is in $n^2 + |\tau-l_{m,n}| \le \frac{1}{\delta}$ while support of $U^{(1)}$ is
in its complement.
Then 
\begin{multline}
\label{4.3}
\left \{ \int_{\mathbb{R}}  \left ( n^2 + | \tau - l_{m,n} | \right )^{2b} \Big | \mathcal{F} [ \phi q] (\tau) 
\Big |^2 d\tau \right \}^{1/2} \\
\lesssim  
\left \{ \int_{\mathbb{R}}  \left ( n^2 + | \tau - l_{m,n} | \right )^{2b} \Big | U^{(1)}*\Phi [\tau]  
\Big |^2 d\tau \right \}^{1/2} 
+\left \{ \int_{\mathbb{R}}  \left ( n^2 + | \tau - l_{m,n} | \right )^{2b} \Big | U^{(2)}*\Phi [\tau]  
\Big |^2 d\tau \right \}^{1/2} 
\end{multline}
Now, we note that the first term on the right is given by
\begin{equation}
\begin{aligned}
&\left \{ \int_{\mathbb{R}}  \left ( n^2 + | \tau - l_{m,n} | \right )^{2b} \Big |\int_{\mathbb{R}} U^{(1)} (\tau-\tau_1)
\Phi (\tau_1) d\tau_1  \Big |^2 d\tau \right \}^{1/2}
\\
&\lesssim  
\int_{\mathbb{R}} \left \{ \int_{\mathbb{R}}  
\left ( n^2 + | \tau -\tau_1 - l_{m,n} | \right )^{2b} \left ( 1+ |\tau_1| \right )^{2b} 
\Big | U^{(1)} (\tau-\tau_1) \Phi (\tau_1) \Big |^2 d\tau \right \}^{1/2} d\tau_1 \\
&\lesssim \int_{\mathbb{R}} \left \{ \int_{\mathbb{R}} 
\frac{\left ( n^2 + |\tau-\tau_1-l_{m,n} | \right )^{2b'}}{
\left ( n^2 + |\tau-\tau_1-l_{m,n} | \right )^{2(b'-b)}}
\Big | U^{(1)} (\tau-\tau_1)  \Big |^2 d\tau \right \}^{1/2} (1+|\tau_1|)^{b} 
|\Phi (\tau_1 )| d\tau_1    \\
&\lesssim \delta^{b'-b} 
\left \{ \int_{\mathbb{R}} 
\left ( n^2 + |\tau-\tau_1-l_{m,n} | \right )^{2b'}
\Big | U^{(1)} (\tau-\tau_1)  \Big |^2 d\tau \right \}^{1/2} 
\end{aligned}
\end{equation}
For the second part, note that 
\begin{equation}
\begin{aligned}
\int_{\mathbb{R}} & \left ( n^2 + | \tau - l_{m,n} | \right )^{2b} \Big | U^{(2)}*\Phi [\tau]  
\Big |^2 d\tau =
\int_{\mathbb{R}} \left ( n^2 + |\tau-l_{m,n} | \right )^{2b} \Big |\int_{\mathbb{R}} 
U^{(2)} (\tau_1)
\Phi (\tau-\tau_1) d\tau_1 \Big |^2 d\tau \\
&\lesssim   
\int_{\mathbb{R}} \Big | \int_{\mathbb{R}} \left ( n^2 + |\tau_1-l_{m,n} | \right )^{b} 
\left ( 1 + |\tau-\tau_1| \right )^{b} 
U^{(2)} (\tau_1)
\Phi (\tau-\tau_1) d\tau_1 \Big |^2 d\tau 
\\
&\lesssim 
\left \{ \int_{(n^2+|\tau_1-l_{m,n} |) \le \delta^{-1}} \frac{d\tau_1}{(n^2+|\tau_1-l_{m,n}|)^{2(b'-b)}} \right \} \\
&\times \left \{ \int_{\mathbb{R}} \int_{\mathbb{R}} |U^{(2)} (\tau_1)|^2 |\Phi (\tau-\tau_1) |^2 \left ( 1 + |\tau-\tau_1| \right )^{2b} 
\left ( n^2 + |\tau_1-l_{m,n} | \right )^{2b'} d\tau_1 d\tau \right \}  \\
&\lesssim \delta^{2b'-2b-1} \left ( \int_{\mathbb{R}} (1+{\hat \tau}^2) |\Phi ({\hat \tau}) |^2 d{\hat \tau} \right )
\left ( \int_{\mathbb{R}} \left ( n^2 + |\tau_1-l_{m,n} | \right )^{2b'} \Big | U^{(2)} (\tau_1) \Big |^2 d\tau_1 
\right )    
\\
&\lesssim 
\delta^{2b'-2b-1} \int_{-\delta}^{\delta} \left (|\phi (t)|^2 + |\phi^\prime (t) |^2 \right ) dt
\left ( \int_{\mathbb{R}} \left ( n^2 + |\tau_1-l_{m,n} | \right )^{2b'} \Big | U^{(2)} (\tau_1) \Big |^2 d\tau_1 
\right )   \\ 
&\lesssim \delta^{2b'-2b} 
\left ( \int_{\mathbb{R}} \left ( n^2 + |\tau_1-l_{m,n} | \right )^{2b'} \Big | Q (\tau_1) \Big |^2 d\tau_1 
\right )    
\end{aligned}
\end{equation}
Therefore the Lemma follows by combining the two results.
\end{proof}
\begin{Lemma}{(Based on Koenig, Ponce, Vega \cite{KoenigPV})}.
\label{lem4.2.a}
For $ 1> b > \frac{1}{2}$ and $\tau \in \mathbb{R}$,  
\begin{equation}
\frac{1}{\left (k_3 + |\tau-k_2| \right )^{2 (1-b')} \left ( k_3 + |\tau-k_4| \right )^{2b}}
\le \frac{1}{k_3^{2b} (k_3+|k_2-k_4|)^{2 (1-b')}}   
\end{equation}
and
\begin{equation}
\frac{1}{\left (k_3 + |\tau-k_2| \right )^{2 b} \left ( k_3 + |\tau-k_4| \right )^{2(1-b')}}
\le \frac{1}{k_3^{2b} (k_3+|k_2-k_4|)^{2 (1-b')}}   
\end{equation}
\end{Lemma}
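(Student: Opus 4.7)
My plan is to recast both inequalities in product form. Inequality (a) is equivalent to
$$ k_3^{2b}\,(k_3+|k_2-k_4|)^{2(1-b')} \lesssim (k_3+|\tau-k_2|)^{2(1-b')}\,(k_3+|\tau-k_4|)^{2b}, $$
while (b) is the same estimate with the exponents $2(1-b')$ and $2b$ on the right-hand side swapped. Since the expression $k_3^{2b}(k_3+|k_2-k_4|)^{2(1-b')}$ depends on $k_2,k_4$ only through $|k_2-k_4|$, it is symmetric under the involution $k_2\leftrightarrow k_4$. Consequently (b) follows from (a) after relabeling, and it suffices to establish (a).

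For (a), the driving observation is the triangle inequality $|k_2-k_4|\le|\tau-k_2|+|\tau-k_4|$, which forces at least one of $|\tau-k_2|$, $|\tau-k_4|$ to be $\ge\tfrac12|k_2-k_4|$. I would split into two cases. In \emph{Case A}, where $|\tau-k_2|\ge\tfrac12|k_2-k_4|$, the elementary inequality $k_3+|\tau-k_2|\ge\tfrac12(k_3+|k_2-k_4|)$ combined with the trivial bound $k_3+|\tau-k_4|\ge k_3$ immediately produces the right exponent pairing $(k_3+|k_2-k_4|)^{2(1-b')}\,k_3^{2b}$, up to a constant factor $2^{2(1-b')}$.

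In \emph{Case B}, where $|\tau-k_4|\ge\tfrac12|k_2-k_4|$, the analogous trick yields $(k_3+|\tau-k_4|)^{2b}\gtrsim(k_3+|k_2-k_4|)^{2b}$ together with $(k_3+|\tau-k_2|)^{2(1-b')}\ge k_3^{2(1-b')}$, which gives a lower bound on the right-hand side matching the target \emph{up to a swap} of the exponents of $k_3$ and $(k_3+|k_2-k_4|)$. This exponent swap is the one mildly delicate point of the argument. I would resolve it by invoking the running hypothesis $1/2 < b\le b'<1$ (as applied in Proposition \ref{prop0}), under which $2b-2(1-b')=2(b+b'-1)>0$. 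Since $k_3+|k_2-k_4|\ge k_3$, raising both sides to this positive exponent preserves the inequality and rearranging yields
$$ k_3^{2(1-b')}(k_3+|k_2-k_4|)^{2b}\ \ge\ k_3^{2b}(k_3+|k_2-k_4|)^{2(1-b')}, $$
which converts the Case B bound into the desired form. Combining the two cases gives (a) with an implicit constant depending only on $b,b'$, and by the symmetry noted above, (b) follows at once.
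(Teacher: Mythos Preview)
Your argument is correct, but note it establishes the inequality only up to a multiplicative constant (at worst $2^{2b}$), whereas the lemma as stated has $\le$ with constant $1$. The paper's proof is different and obtains the sharp constant: it writes the reciprocal of the left side as $[f(\tau)]^{2(1-b')}$ with $f(\tau)=(k_3+|\tau-k_2|)(k_3+|\tau-k_4|)^{b/(1-b')}$, then carries out a one-variable calculus analysis to show that the global minimum of $f$ over $\tau\in\RR$ is attained at $\tau=k_4$, giving $f(\tau)\ge f(k_4)=(k_3+|k_2-k_4|)\,k_3^{b/(1-b')}$ exactly. The key step in that analysis---comparing the endpoint values $f(k_2)$ and $f(k_4)$---uses $(b+b'-1)/(1-b')>0$, i.e.\ precisely the hypothesis $b'>1-b$ that you invoke in your Case~B. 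Your triangle-inequality case split is shorter and more transparent, and since every application of this lemma in the paper (see \eqref{8}) sits under a $\lesssim$, the loss of the sharp constant is harmless. Both proofs derive the second inequality from the first via the $k_2\leftrightarrow k_4$ symmetry.
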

\begin{proof}
Consider the first statement.
Note that $1-b' < b$. Note that we may write the left hand side as $\frac{1}{[f(\tau)]^{2(1-b')}}$ where
\begin{equation}
f(\tau) = \left (k_3 + |\tau-k_2| \right ) \left ( k_3 + |\tau-k_4 | \right )^{b/(1-b')}
\end{equation}
It is clear that in the intervals $\tau < \min \left \{ k_2, k_4 \right \}$, $ \tau > \max \left \{ k_2, k_4 \right \}$,
$f(\tau)$ is monotonically decreasing and increasing respectively with increasing $\tau$. 
Therefore, the minimum of $f(\tau)$ occurs for
$\tau \in [k_2, k_4]$ when $k_2 \leqslant k_4$ and for $\tau \in [k_4, k_2]$ when $k_2 > k_4$. 
Consider first the case when $k_2 \leqslant k_4$; then in $\tau \in [k_2, k_4]$, $f(\tau) = f_0 (\tau)$, where
\begin{equation}
f_0(\tau) = \left (k_3 + \tau-k_2 \right ) \left ( k_3 + k_4 - \tau \right )^{b/(1-b')}
\end{equation}
It can be checked that $f_0^\prime (k_4) < 0$ and simple calculus shows
$f_0^\prime (\tau) < 0$ for $\tau \in (\tau_c, k_4] $ and $f_0^\prime (\tau) > 0$ for
$\tau \in (-\infty, \tau_c )$ where
$$
\tau_c = \frac{-k_3 (b'+b-1) +k_4 (1-b') + k_2 b}{1-b'+b} < k_4 $$
We have two cases (i) $\tau_c < k_2$ and (ii) $\tau_c \in (k_2, k_4)$. In case (i), it is clear that the minimum
of $f_0(\tau)$ for $\tau \in [k_2, k_4]$ occurs at $\tau=k_4$. 
In case (ii), it is clear that $f_0(\tau)$ has a local
maximum at $\tau=\tau_c$. Therefore, in the interval $[k_2, k_4]$ minimum of $f_0$ 
is either at $k_2$ or $k_4$.
We notice that 
\begin{equation}
\frac{f_0(k_2)}{f_0(k_4)}  = 
\frac{k_3 \left ( k_3 + k_4 - k_2 \right )^{b/(1-b')}}{k_3^{b/(1-b')}  \left ( k_3 + k_4 -k_2 \right )}
= \left ( \frac{k_3+k_4-k_2}{k_3} \right )^{(b'+b-1)/(1-b')} > 1 
\end{equation}
implying that $f_0(k_2) > f_0(k_4)$. 
So, the minimum of $f$ in the interval $[k_2,k_4]$ occurs at $\tau=k_4$.
This implies that 
\begin{equation}
\frac{1}{\left (k_2 + |\tau-k_2| \right )^{2 (1-b')} \left ( k_3 + |\tau-k_4| \right )^{2b}}
= \frac{1}{[f(\tau)]^{2(1-b')}} \le  \frac{1}{[f(k_4)]^{2(1-b')}} =  
\frac{1}{k_3^{2b} (k_3+|k_2-k_4|)^{2 (1-b')}}   
\end{equation}
On the other hand if $k_2 > k_4$, in the interval $\tau \in [k_4, k_2]$, $f (\tau)=f_1 (\tau)$, where
\begin{equation}
f_1(\tau) = \left (k_3 + k_2 -\tau \right ) \left ( k_3 + \tau-k_4 \right )^{b/(1-b')}
\end{equation}
Then $f_1^\prime (\tau_c) = 0 $, where $\tau_c = k_3(b'+b-1) + k_4 (1-b')+k_2 b > k_4 $. Also, it is to be noted
that for $\tau \in (k_4, \tau_c)$, $f^\prime > 0$ and $f^\prime  < 0$ for $\tau \in (\tau_c, \infty)$. Therefore,
over the interval $[k_2, k_4]$ minimum can occur at either $k_2$ or $k_4$. However, since   
\begin{equation}
\frac{f_1(k_2)}{f_1(k_4)} =  \left ( \frac{k_3 +k_2-k_4}{k_3} \right )^{(b'+b-1)/(1-b')} > 1 
\end{equation}
Therefore, once again, the minimum of $f(\tau)$ is at $\tau=k_4$ as before and we have the same result and
the first statement has been proved. For the second part, we simply interchange $k_2$ and $k_4$ in the previous
argument and the result follows.
\end{proof}
\begin{Lemma}{(Based on Koenig, Ponce, Vega \cite{KoenigPV})}
\label{lem4.2.c}
For $ \frac{1}{2} < b < 1$ and $k_3 \ge k_1 \ge 1$, then the following hold 
\begin{equation}
\int_{\mathbb{R}} \frac{d\tau}{\left ( k_1 + |\tau-k_2 | \right )^{2b} \left ( k_3 + |\tau-k_4 | \right )^{2b}}
\lesssim \frac{1}{k_1^{2b-1} \left ( k_3 + |k_2-k_4| \right )^{2b} }
\end{equation}
\end{Lemma}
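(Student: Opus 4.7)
The plan is to reduce the two-factor integrand to a single-factor one by splitting the real line into two regions, in each of which one of the factors can be bounded pointwise from below in a useful way. The remaining one-factor integral is elementary.

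First I would use the triangle inequality to observe that
\begin{equation*}
\bigl(k_1 + |\tau-k_2|\bigr) + \bigl(k_3 + |\tau-k_4|\bigr) \;\ge\; k_1 + k_3 + |k_2-k_4| \;\ge\; k_3 + |k_2-k_4|,
\end{equation*}
so that for every $\tau \in \mathbb{R}$ at least one of the two factors dominates $(k_3+|k_2-k_4|)/2$. Accordingly, I would write $\mathbb{R} = R_1 \cup R_2$ with
\begin{equation*}
R_1 = \{\tau : k_1+|\tau-k_2| \ge (k_3+|k_2-k_4|)/2\}, \quad
R_2 = \{\tau : k_3+|\tau-k_4| \ge (k_3+|k_2-k_4|)/2\}.
\end{equation*}

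On $R_1$, I would pull the lower bound on the first factor out of the integrand, leaving an integral in the second factor alone:
\begin{equation*}
\int_{R_1} \frac{d\tau}{(k_1+|\tau-k_2|)^{2b}(k_3+|\tau-k_4|)^{2b}}
\;\lesssim\; \frac{1}{(k_3+|k_2-k_4|)^{2b}} \int_{\mathbb{R}} \frac{d\tau}{(k_3+|\tau-k_4|)^{2b}}
\;\lesssim\; \frac{1}{k_3^{2b-1}(k_3+|k_2-k_4|)^{2b}},
\end{equation*}
using the elementary estimate $\int_{\mathbb{R}}(k+|\tau|)^{-2b}\,d\tau \lesssim k^{1-2b}$ valid for $2b>1$. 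The hypothesis $k_3 \ge k_1$ together with $2b-1>0$ then gives $k_3^{-(2b-1)} \le k_1^{-(2b-1)}$, yielding the desired bound on $R_1$. On $R_2$ I would do the symmetric argument, pulling the lower bound on the second factor out and integrating the first factor over all of $\mathbb{R}$; here no exchange between $k_1$ and $k_3$ is needed and the $k_1^{1-2b}$ factor appears directly.

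There is no real obstacle here, since $k_3 \ge k_1 \ge 1$ and $2b>1$ cooperate cleanly. The only small point to watch is that the two single-factor integrals each produce a $k_i^{1-2b}$ with $i \in \{1,3\}$, and one must invoke $k_3 \ge k_1$ to collapse both contributions to the uniform $k_1^{1-2b}$ appearing on the right-hand side of the claimed inequality. Summing the bounds over $R_1$ and $R_2$ completes the proof.
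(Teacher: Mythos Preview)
Your proof is correct and takes a somewhat different route from the paper. The paper assumes without loss of generality $k_2 \le k_4$, splits the integration domain at the midpoint $\tau = (k_2+k_4)/2$, and on the right half uses the algebraic observation that when $|\tau-k_2| \ge |\tau-k_4|$ and $k_3 \ge k_1$ one may swap the roles of $k_1$ and $k_3$ in the two factors; on each half the far factor is then bounded below by $k_3 + |k_2-k_4|/2$ and pulled out, leaving a one-factor integral with $k_1$. Your argument instead uses the pigeonhole observation that the sum of the two factors is always at least $k_3+|k_2-k_4|$, so one of them must exceed half of that; this avoids the midpoint split and the $k_1\leftrightarrow k_3$ swap entirely. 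Your decomposition is cleaner and more transportable to similar convolution-type bounds, at the minor cost of needing the extra step $k_3^{1-2b} \le k_1^{1-2b}$ on $R_1$; the paper's swap trick arranges things so that the $k_1$-factor is always the one integrated, making that last step unnecessary.
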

\begin{proof}
It suffices to argue only the case $k_2 \le  k_4$; 
since the final result is symmetric in $k_2$ and $k_4$;
if this were not true, we can simply
interchange the $k_2$ and $k_4$ in the following argument.
We note that 
\begin{equation}
\frac{1}{\left ( k_1 + |\tau-k_2 | \right )^{2b} \left ( k_3 + |\tau-k_4 | \right )^{2b}}
\le \frac{1}{(k_3 + |\tau-k_2|)^{2b} \left ( k_1 +|\tau-k_4| \right )^{2b}} 
\ , {\rm for} ~\tau \ge \frac{k_2+k_4}{2}
\end{equation}
Then, the integral becomes 
\begin{equation}
\begin{aligned}
&\left \{ \int_{-\infty}^{(k_2+k_4)/2} + \int_{(k_2+k_4)/2}^{\infty} \right \} 
\frac{d\tau}{\left ( k_1 + |\tau-k_2 | \right )^{2b} \left ( k_3 + |\tau-k_4 | \right )^{2b}}
\\
&\le 
\int_{-\infty}^{(k_2+k_4)/2} 
\frac{d\tau}{\left ( k_1 + |\tau-k_2 | \right )^{2b} \left ( k_3 + |\tau-k_4 | \right )^{2b}}
+\int_{(k_2+k_4)/2}^\infty 
\frac{d\tau}{\left ( k_3 + |\tau-k_2 | \right )^{2b} \left ( k_1 + |\tau-k_4 | \right )^{2b}}
\\
&\le 
\frac{1}{(k_3+|k_2-k_4|/2)^{2b}} \left\{\int_{-\infty}^{(k_2+k_4)/2} 
\frac{d\tau}{\left ( k_1 + |\tau-k_2 | \right )^{2b} }
+\int_{(k_2+k_4)/2}^\infty 
\frac{d\tau}{\left ( k_1 + |\tau-k_4 | \right )^{2b} }\right\} \\
&\le \frac{1}{(k_3+|k_2-k_4|/2)^{2b}} \left \{ \int_{-\infty}^{k_2}
\frac{d\tau}{\left ( k_1 - \tau+k_2  \right )^{2b} }
+ \int_{k_2}^{(k_2+k_4)/2} 
\frac{d\tau}{\left ( k_1 - k_2+\tau \right )^{2b} } \right . \\
&\left . + \int_{(k_2+k_4)/2}^{k_4}
\frac{d\tau}{\left ( k_1 - \tau+k_4 \right )^{2b} } 
+\int_{k_4}^{\infty}
\frac{d\tau}{\left ( k_1 - k_4+\tau \right )^{2b} } \right \} 
\lesssim \frac{1}{k_1^{2b-1} \left ( k_3+|k_2-k_4| \right )^{2b}}
\end{aligned}
\end{equation}
\end{proof}
\begin{Lemma}{(Based on Tao \cite{Tao}).}
\label{lem4.3}
If $b > \frac{1}{2}$, then 
\begin{equation}
\sup_{t} \Big | M (t) \Big |^2 \lesssim \frac{1}{|n|^{4 b-2}} \int_{\mathbb{R}} \left ( n^2 + |\tau-l_{m,n} | \right )^{2b}
\Big | \mathcal{F} [ M ] (\tau) \Big |^2 d\tau  
\end{equation} 
\end{Lemma}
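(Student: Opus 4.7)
The plan is to obtain the pointwise bound on $M(t)$ from its Fourier representation via a weighted Cauchy--Schwarz inequality, with the weight chosen precisely so that one factor matches the right-hand side of the claimed inequality and the other factor integrates to the desired power of $|n|$.

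First, I would start from Fourier inversion, writing
\begin{equation*}
M(t) = \frac{1}{2\pi} \int_{\mathbb{R}} e^{i t \tau} \mathcal{F}[M](\tau)\, d\tau,
\end{equation*}
so that $|M(t)| \le \frac{1}{2\pi} \int_{\mathbb{R}} |\mathcal{F}[M](\tau)|\, d\tau$, a bound that is uniform in $t$. Next I would insert the factor $(n^2 + |\tau - l_{m,n}|)^{b}$ and its reciprocal into the integrand, and apply Cauchy--Schwarz to obtain
\begin{equation*}
\sup_t |M(t)|^2 \;\lesssim\; \left( \int_{\mathbb{R}} \frac{d\tau}{(n^2 + |\tau - l_{m,n}|)^{2b}} \right) \left( \int_{\mathbb{R}} (n^2 + |\tau - l_{m,n}|)^{2b} |\mathcal{F}[M](\tau)|^2\, d\tau \right).
\end{equation*}
The second factor is exactly the quantity appearing on the right side of the lemma.

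The remaining step is to evaluate the first factor. After the change of variables $u = \tau - l_{m,n}$ and using the evenness in $u$, one gets
\begin{equation*}
\int_{\mathbb{R}} \frac{d\tau}{(n^2 + |\tau - l_{m,n}|)^{2b}} = 2 \int_0^\infty \frac{du}{(n^2 + u)^{2b}} = \frac{2}{2b-1}\, |n|^{2-4b},
\end{equation*}
where convergence at infinity requires exactly the hypothesis $b > 1/2$. Combining this with the Cauchy--Schwarz bound yields the claim.

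There is really no serious obstacle here; the argument is entirely elementary. The only thing to be careful about is that the hypothesis $b > 1/2$ is used in a single essential place, namely the convergence (and correct power of $|n|$) of the $\tau$-integral, which is precisely what dictates the sharp weight $|n|^{4b-2}$ on the right-hand side.
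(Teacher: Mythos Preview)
Your proof is correct and follows essentially the same route as the paper: Fourier inversion, a weighted Cauchy--Schwarz with weight $(n^2+|\tau-l_{m,n}|)^{b}$, and explicit evaluation of the remaining integral using $b>\tfrac12$. The only cosmetic difference is that the paper first shifts $\tau\mapsto\tau_0+l_{m,n}$ before applying Cauchy--Schwarz, whereas you insert the shifted weight directly; the content is identical.
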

\begin{proof}
We note that
\begin{equation}
M(t) = \int_{\mathbb{R}} e^{i \tau t} \mathcal{F} [ M] (\tau) d\tau
\end{equation}
With substitution $\tau = \tau_0 + l_{m,n}$, 
\begin{equation}
M(t) = e^{i l_{m,n} t} \int_{\mathbb{R}} e^{i \tau_0 t} \mathcal{F} [ M] (\tau_0+l_{m,n}) d\tau_0
\end{equation}
So, for any $t \in \mathbb{R}$, 
\begin{multline}
|M(t)|^2 = \Big | \int_{\mathbb{R}} e^{i \tau_0 t + i l_{m,n} t} \mathcal{F} [ M] (\tau_0 + l_{m,n} ) 
d\tau_0 \Big |^2 \\
\le \left \{ \int_{\mathbb{R}} \Big | \mathcal{F} [M] (\tau_0 + l_{m,n} ) \Big |^2 
\left ( n^2 + |\tau_0| \right )^{2b}  d\tau_0 \right \}   
\left \{ \int_{\mathbb{R}} \frac{d\tau_0}{(n^2+|\tau_0|)^{2b}} \right \} \\
\lesssim \frac{1}{|n|^{4b-2}} 
\int_{\mathbb{R}} \Big | \mathcal{F} [M] (\tau) \Big |^2 
\left ( n^2 + |\tau-l_{m,n} | \right )^{2b}  d\tau 
\end{multline}
Since the right side is independent of $t$, the lemma follows.
\end{proof}
\begin{Lemma}{(Koenig, Ponce, Vega \cite{KoenigPV}).}
\label{lem4.6}
\begin{equation}
\label{eqlem4.6}
\int_{\mathbb{R}} \frac{d\xi_1}{\left (1 + |\tau-\xi_1^3- (\xi-\xi_1)^3 | \right )^{2b}} 
\lesssim \frac{1}{\sqrt{|\xi|} \left (1 + |4 \tau-\xi^3 | \right )^{1/2}}  
\end{equation}
\end{Lemma}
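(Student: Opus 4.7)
The plan is to simplify the cubic expression in the denominator by a clever substitution, reduce to a one-variable quadratic-type integral, and then handle that by a case analysis depending on the sign configuration.

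First I would center the integration variable at $\xi/2$. Writing $\xi_1 = \xi/2 + u$ so that $\xi - \xi_1 = \xi/2 - u$, the elementary identity $\xi_1(\xi-\xi_1) = \xi^2/4 - u^2$ gives
\begin{equation*}
\xi_1^3 + (\xi-\xi_1)^3 = \xi^3 - 3\xi\xi_1(\xi-\xi_1) = \xi^3 - 3\xi\bigl(\xi^2/4 - u^2\bigr) = \tfrac{\xi^3}{4} + 3\xi u^2 ,
\end{equation*}
so that $\tau - \xi_1^3 - (\xi-\xi_1)^3 = \tfrac14(4\tau-\xi^3) - 3\xi u^2$. Setting $A := \tfrac14(4\tau - \xi^3)$ and rescaling by $v = \sqrt{3|\xi|}\,u$, the left side of \eqref{eqlem4.6} becomes
\begin{equation*}
\frac{1}{\sqrt{3|\xi|}} \int_{\mathbb{R}} \frac{dv}{\bigl(1 + |A - s v^2|\bigr)^{2b}}, \qquad s := \mathrm{sgn}(\xi) \in \{\pm 1\}.
\end{equation*}
Since $1 + |4\tau-\xi^3| = 1 + 4|A|$ is comparable to $1+|A|$, it remains to show that the one-variable integral $I(A) := \int_{\mathbb{R}} \bigl(1+|A - sv^2|\bigr)^{-2b}\,dv$ satisfies $I(A) \lesssim (1+|A|)^{-1/2}$ uniformly in $s \in \{\pm 1\}$.

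The main step is the case analysis on $I(A)$. When $sA \le 0$, the quantity $A - sv^2$ has constant sign, so after substituting $v = \sqrt{1+|A|}\,w$ one obtains $I(A) \lesssim (1+|A|)^{1/2-2b} \le (1+|A|)^{-1/2}$ for $b > 1/2$. When $sA > 0$, the denominator can vanish; by symmetry we reduce via $w = v^2$ to
\begin{equation*}
I(A) \lesssim \int_0^\infty \frac{dw}{\sqrt{w}\,\bigl(1 + |w - |A||\bigr)^{2b}},
\end{equation*}
which I would estimate by splitting the $w$-axis into three pieces: $w \in [0,|A|/2]$ (where $1+|w-|A|| \gtrsim 1+|A|$ gives $\lesssim |A|^{1/2}(1+|A|)^{-2b} \lesssim (1+|A|)^{-1/2}$), $w \in [|A|/2, 2|A|]$ (where $\sqrt{w}\gtrsim \sqrt{|A|}$, reducing to a one-dimensional integral of $(1+|w-|A||)^{-2b}$ which is $O(1)$), and $w \ge 2|A|$ (where $\sqrt{w}\gtrsim\sqrt{|A|}$ again and the tail converges for $b>1/2$). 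Each piece contributes $\lesssim (1+|A|)^{-1/2}$, and for bounded $|A|$ the integral is trivially $O(1)$.

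Combining the prefactor $(3|\xi|)^{-1/2}$ with the bound $I(A) \lesssim (1+|4\tau-\xi^3|)^{-1/2}$ yields \eqref{eqlem4.6}. The main technical difficulty is the case $sA > 0$, where the integrable singularity at the zero of $A - sv^2$ must be balanced against the Jacobian factor $1/\sqrt{w}$; the three-piece splitting above is designed precisely so that in each piece one of the two weights dominates and the other can be integrated cleanly.
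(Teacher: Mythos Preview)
Your proof is correct and follows essentially the same route as the paper: both exploit the identity $\xi_1^3+(\xi-\xi_1)^3=\tfrac{\xi^3}{4}+3\xi(\xi_1-\xi/2)^2$ to reduce to a one-variable integral. The paper parametrizes instead by $\mu=\tau-\xi_1^3-(\xi-\xi_1)^3$ (whose Jacobian is $\propto\sqrt{|\xi|\,|4\tau-\xi^3-4\mu|}$) and simply asserts the resulting bound $\int_{\mathbb R}\frac{d\mu}{\sqrt{|4\tau-\xi^3-4\mu|}\,(1+|\mu|)^{2b}}\lesssim(1+|4\tau-\xi^3|)^{-1/2}$, whereas your centering and rescaling lead to the equivalent integral $I(A)$, for which you supply the case analysis that the paper omits.
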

\begin{proof}
We introduce change of variables
\begin{equation}
\mu = \tau-\xi_1^3 - (\xi-\xi_1)^3 
\end{equation}
Then, it is easily checked that 
$$ d\mu = 3 \xi (\xi-2 \xi_1 ) d\xi_1 \ , \xi_1 = \frac{1}{2} \left [ \xi \pm \sqrt{\frac{4 \tau-\xi^3-4 \mu}{3 \xi}}
\right ] $$
and so 
$$ |\xi(\xi-2 \xi_1)| = 3^{-1/2} \sqrt{|\xi|} \sqrt{|4 \tau - \xi^3 - 4 \mu|}
$$
and therefore
\begin{equation}
\begin{aligned}
\int_{\mathbb{R}} \frac{d\xi_1}{
\left (1 + |\tau-\xi_1^3- (\xi-\xi_1)^3 | \right )^{2b}} 
&\le \frac{1}{\sqrt{3 |\xi|}} 
\int_{\mathbb{R}} \frac{d\mu}{\left ( 4\tau-\xi^3-4\mu \right )^{1/2} (1+|\mu|)^{2b}}\\
&\lesssim \frac{1}{\sqrt{|\xi|} \left ( 1+ |4 \tau- \xi^3 | \right )^{1/2}} 
\end{aligned}
\end{equation}
\end{proof}

\section{Acknowledgments} 
The research of S.T was partially supported by the NSF grant DMS-1108794. Additionally, S.T acknowledges support from the Math Departments, UIC, UChicago and Imperial College during this author's 
sabbatical visits. 
This work was started during C.T's post-doctoral studies at the Ohio State University, whose support she gratefully acknowledges.

\vfill \eject
\end{document}